\setlist[itemize]{leftmargin = *}
\setlist[enumerate]{leftmargin = *}
\newtheorem{Cor}{Corollary}[section]
\newtheorem{Lem}[Cor]{Lemma}
\newtheorem{Thm}[Cor]{Theorem}
\newtheorem{Prop}[Cor]{Proposition}
\theoremstyle{remark}
\newtheorem{Eg}[Cor]{Example}
\newtheorem{Rmk}[Cor]{Remark}
\theoremstyle{definition}
\newtheorem{Def}[Cor]{Definition}
\newcommand{\C}{\mathbb{C}}
\newcommand{\D}{\displaystyle}
\newcommand{\G}{\mathcal{G}}
\newcommand{\M}[1]{\func{\mathcal{M}}{#1}}
\newcommand{\N}{\mathbb{N}}
\newcommand{\R}{\mathbb{R}}
\newcommand{\T}{\mathbb{T}}
\newcommand{\Z}{\mathbb{Z}}
\newcommand{\Br}[1]{\mleft( #1 \mright)}
\newcommand{\Cc}[1]{\func{\operatorname{C_{c}}}{#1}}
\newcommand{\df}{\vcentcolon =}
\newcommand{\Id}{\operatorname{Id}}
\newcommand{\ZZ}[2]{\func{Z_{\operatorname{cont}}^{1}}{#1,#2}}
\newcommand{\BB}[2]{\func{B_{\operatorname{cont}}^{1}}{#1,#2}}
\newcommand{\HH}[2]{\func{H_{\operatorname{cont}}^{1}}{#1,#2}}
\newcommand{\Abs}[1]{\mleft| #1 \mright|}
\newcommand{\DRG}[2]{\func{\G}{#1,#2}}
\newcommand{\End}[1]{\func{\operatorname{End}}{#1}}
\newcommand{\Int}[3]{\int_{#1} #2 ~ \d #3}
\newcommand{\KMS}[1]{\operatorname{KMS}_{#1}}
\newcommand{\Obj}[1]{\func{\operatorname{Obj}}{#1}}
\newcommand{\Seq}[2]{\Br{#1}_{#2}}
\newcommand{\Set}[2]{\SSet{#1 ~ \middle| ~ #2}}
\newcommand{\Card}[1]{\func{\mathsf{Card}}{#1}}
\newcommand{\Cont}[1]{\func{\operatorname{C}}{#1}}
\newcommand{\Diag}[1]{\func{\Delta}{#1}}
\newcommand{\func}[2]{#1 \Br{#2}}
\newcommand{\Func}[2]{\func{\Br{#1}}{#2}}
\newcommand{\FUNC}[2]{\func{\SqBr{#1}}{#2}}
\newcommand{\IInt}[4]{\Int{#1}{#2}{\func{#3}{#4}}}
\newcommand{\Pair}[2]{\Br{#1,#2}}
\newcommand{\SqBr}[1]{\mleft[ #1 \mright]}
\newcommand{\SSet}[1]{\mleft\{ #1 \mright\}}
\newcommand{\Trip}[3]{\Br{#1,#2,#3}}
\newcommand{\Borel}[1]{\func{\mathscr{B}}{#1}}
\newcommand{\Cstar}[1]{\func{C^{\ast}}{#1}}
\newcommand{\InvIm}[2]{#1^{- 1} \SqBr{#2}}
\newcommand{\Ruelle}[3]{\mathcal{L}_{#1,#2,#3}}
\newcommand{\CstarRed}[1]{\func{C^{\ast}_{\textnormal{r}}}{#1}}
\renewcommand{\d}{\mathrm{d}}
\renewcommand{\i}{\textnormal{i}}
\renewcommand{\ae}{\operatorname{a.e.}}
\renewcommand{\Im}[2]{#1 \SqBr{#2}}
\newcommand{\ZZintfor}[1]{\func{\mathcal{Z}}{#1}}
\begin{document}

%%%%%%%%%%%%%%%%%%%%%%%%%%%%%%%%%%%%%%%%%%%%%%%%%%%%%%%%%%%%%%%%%%%%%%%%%%%%%%%%%%%%%%%%%%%%%%%%%%%%%%%%%%%%%%%%%%%%%%%%%%%%%%%%%%%%%%%%%%%%%%%
% TITLE, ABSTRACT, AUTHOR INFORMATION %%%%%%%%%%%%%%%%%%%%%%%%%%%%%%%%%%%%%%%%%%%%%%%%%%%%%%%%%%%%%%%%%%%%%%%%%%%%%%%%%%%%%%%%%%%%%%%%%%%%%%%%%
%%%%%%%%%%%%%%%%%%%%%%%%%%%%%%%%%%%%%%%%%%%%%%%%%%%%%%%%%%%%%%%%%%%%%%%%%%%%%%%%%%%%%%%%%%%%%%%%%%%%%%%%%%%%%%%%%%%%%%%%%%%%%%%%%%%%%%%%%%%%%%%

\title{Cocycles on Certain Groupoids Associated to $\N^k$-Actions}
\author{Carla Farsi, Leonard Huang, Alex Kumjian \& Judith Packer}
\date{\today}

\maketitle

% \classification{46L05, 46L55, 46K10}

% \keywords{Cocyles, higher-rank Deaconu-Renault groupoids, Ruelle triples, Ruelle operaors, KMS-states.}

\begin{abstract}
We consider groupoids constructed from a finite number of commuting local homeomorphisms acting on a compact metric space, and study generalized Ruelle operators and $ C^{\ast} $-algebras associated to these groupoids. We provide a new characterization of $ 1 $-cocycles on these groupoids taking  values in a locally compact abelian group, given in terms of $ k $-tuples of continuous functions on the unit space satisfying certain canonical identities. Using this, we develop an extended Ruelle-Perron-Frobenius theory for dynamical systems of several commuting operators ($ k $-Ruelle triples and  commuting Ruelle operators). Results on KMS states on $ C^{\ast} $-algebras constructed from these groupoids are derived. When the groupoids being studied come from higher-rank graphs, our results recover existence-uniqueness results for KMS states associated to the graphs.
\end{abstract}

\tableofcontents

%%%%%%%%%%%%%%%%%%%%%%%%%%%%%%%%%%%%%%%%%%%%%%%%%%%%%%%%%%%%%%%%%%%%%%%%%%%%%%%%%%%%%%%%%%%%%%%%%%%%%%%%%%%%%%%%%%%%%%%%%%%%%%%%%%%%%%%%%%%%%%%
\section{Introduction} %%%%%%%%%%%%%%%%%%%%%%%%%%%%%%%%%%%%%%%%%%%%%%%%%%%%%%%%%%%%%%%%%%%%%%%%%%%%%%%%%%%%%%%%%%%%%%%%%%%%%%%%%%%%%%%%%%%%%%%%
%%%%%%%%%%%%%%%%%%%%%%%%%%%%%%%%%%%%%%%%%%%%%%%%%%%%%%%%%%%%%%%%%%%%%%%%%%%%%%%%%%%%%%%%%%%%%%%%%%%%%%%%%%%%%%%%%%%%%%%%%%%%%%%%%%%%%%%%%%%%%%%

Let $X$ be a compact Hausdorff space and $ \sigma: X \to X $ a local homeomorphism of $ X $ onto itself. The so-called Deaconu-Renault groupoid and its associated $C^*$-algebra  corresponding to the pair $ \Pair{X}{\sigma} $ were first studied by V. Deaconu in \cite{D} based on a construction of J. Renault in \cite{Re} in the setting of groupoids of Cuntz algebras. Deaconu adapted Renault's construction by replacing the shift map on the infinite sequence space with a local homeomorphism. Renault further generalized this construction to  local homeomorphisms defined on open subsets in \cite{ReClA}. The \'{e}tale  groupoid associated to a finite family of commuting local homeomorphisms of a compact metric space has gone by various names including Deaconu-Renault groupoids of higher rank, and semidirect product groupoid corresponding to the action of the semigroup $ \N^{k} $ \cite{ER}. In \cite{RWA}, these groupoids were generalized to the setting of partial semigroup actions. Our main purpose in this paper is the study of these groupoids, their associated $ C^{\ast} $-algebras and KMS states which arise naturally on a specific class of  related dynamical systems. This is a sufficiently broad  class to include higher-rank graph $C^*$-algebras associated to finite $k$-graphs. We develop cohomological methods to characterize $ 1 $-cocycles on these $ C^{\ast} $-algebras, which in turn give rise to one-parameter automorphism groups. This leads us to study the KMS states on these $ C^{\ast} $-algebras.
	
KMS states have their origin in equilibrium statistical mechanics and have long been a very fruitful tool in the study of operator algebras. In this paper, we study KMS states for groupoids associated to a finite family of commuting local homeomorphisms of a compact metric space by further developing a Ruelle-Perron-Frobenius (RPF) theory of dynamical systems of several commuting operators.  Although an RPF theory for free abelian semigroups has been introduced by M. Carvalho, F. Rodrigues, and P. Varandas in \cite{CFV1} and \cite{CFV2}, their main emphasis was on skew products, random walks, and topological entropy, whereas our emphasis here will be on the connection to the $ C^{\ast} $-algebras and the use of the Ruelle-Perron-Frobenius operator to prove the existence of measures with appropriate properties (hence states with related properties).

In the groupoid perspective, as first explained by Renault in \cite{Re}, time evolutions (dynamics) on the reduced $ C^{\ast} $-algebra of a groupoid $ \G $ are implemented by continuous real-valued $ 1 $-cocycles on $ \G $, and the task of understanding the KMS states for these dynamics on $ \CstarRed{\G} $ requires, at a minimum, identifying the measures on the unit space of $ \G $ that are quasi-invariant. There are now refinements of Renault's result, see for example work by S. Neshveyev \cite{N} and K. Thomsen \cite{Tho}. More recently, J. Christensen's paper \cite{C} combines quasi-invariant measures with a certain group of symmetries to describe KMS states on groupoid $ C^{\ast} $-algebras for locally compact second countable Hausdorff \'etale groupoids.

Our analysis of the KMS states on groupoids associated to a finite family of commuting local homeomorphisms of a compact metric space stems from a new characterization of their continuous real-valued $1$-cocycles, which in a nutshell are determined completely by a $ k $-tuple of continuous real-valued functions on the unit space of the groupoid satisfying canonical identities. In so doing, we give an isomorphism between the first monoid cohomology of $ \N^{k} $ with coefficients in the module $ \Cont{X,H} $ of continuous functions on $X$ with values in $H$, where $ H $ is a locally compact abelian group, and the first continuous cocycle groupoid cohomology taking values in $ H $.

We base our constructions on the established analysis of KMS states on Deaconu-Renault groupoids of (\cite{Exel, IK,KR,Re1}), together with an extended Ruelle-Perron-Frobenius theory for dynamical systems of several commuting operators, modeled on the one-dimensional theory of D. Ruelle \cite{Ru1} and P. Walters \cite{W1}.

In \cite{KR}, A. Kumjian and J. Renault  associated KMS states to Ruelle operators constructed on a groupoid arising from a single expansive map, and in \cite{IK}, M. Ionescu and A. Kumjian related the associated states to Hausdorff measures, which led to applications to KMS states on Cuntz algebras, $ C^{\ast} $-algebras arising from directed graphs, and $ C^{\ast} $-algebras associated to fractafolds. In addition, Ruelle operators were used in \cite{BEFT}. In this paper, we generalize some of these results to groupoids associated to  a  finite family of commuting local homeomorphisms of a compact metric space.  In particular we deduce that in order for the  adjoint of the Ruelle operator  associated to a finite family of commuting local homeomorphism to have an eigenmeasure, it is necessary and sufficient that the adjoint of the Ruelle operator corresponding to a non-trivial product of the local homeomorphisms have that same eigenmeasure, thus reducing matters to the one-dimensional case studied by P. Walters.

Ruelle operators are important tools in mathematical physics, particularly thermodynamics, and yield a formulation of a ``continuous'' extension of the seminal  Perron-Frobenius Theorem. Ruelle's classical  result, known as the ``Ruelle-Perron-Frobenius (RPF) Theorem'' gives a sufficient condition for a Ruelle triple to satisfy the unique positive eigenvalue condition  \cite{Ru1,Ru2}. In \cite{W}, building on earlier work of R. Bowen, P. Walters gave criteria for the RPF Theorem to hold for more general Ruelle triples $ \Trip{X}{\sigma}{\varphi} $ merely demanding that $ X $ be a metric space, $ \sigma $ be positively expansive and exact, and $ \varphi $ satisfy a smoothness condition. We extend the RPF Theorem to certain Ruelle triples of type $ \Trip{X}{\sigma}{\varphi} \df \Trip{X}{\sigma_{i}}{\varphi_{i}}_{i = 1}^{k} $, where the $ \sigma_{i} $ form a commuting family of local homeomorphisms which are positively expansive and exact, and the $ \varphi_{i} $ satisfy the Walters conditions.

To derive our generalization of the the RPF Theorem, we first need to construct continuous $ 1 $-cocycles on the groupoid $ \DRG{X}{\sigma} $ arising from a $ k $-tuple of commuting local homeomorphisms on $ X $, with values in $ \R $. In order to study this in the greatest possible generality, we first study the problem of calculating $ \func{H^{1}}{\N^{k},A} $ where $ A $ is an $ \N^{k} $-module. In Theorem \ref{prop:explicit description fo cocycles associated to cocycle condition}, we are able to give an explicit formula for all elements of $ \func{Z^{1}}{\N^{k},A} $ from $ k $-tuples in $ A^{k} $ satisfying what we call the ``module cocycle condition'' and describe which of these are coboundaries. In the case $ k = 1 $, this formula is similar to the formula given  in \cite{MDK}.

We  apply this theorem by starting with   a $ k $-tuple of commuting local homeomorphisms of $ X $, and using them to give  $ \Cont{X,H} $ an $ \N^{k} $-module structure, for any locally compact abelian group $ H $. We then provide an explicit isomorphism between $ \func{H^{1}}{\N^{k},\Cont{X,H}} $ and the first cohomology group $ \HH{\DRG{X}{\sigma}}{H} $ of the groupoid $ \DRG{X}{\sigma} $. Specializing to the case where $ H = \R $, we obtain explicit formulas for elements of  $ \ZZ{\DRG{X}{\sigma}}{\R} $ that we use in the generalized RPF theorem.

Recently there has been great interest (cf. \cite{ALRS,FGLP,McN}) in the KMS states associated to 1-parameter dynamical systems on $ \Cstar{\Lambda} $ where $ \Lambda $ is a higher-rank graph and the dynamics arises either from the canonical gauge action of $ \T^{k} $ on $ \Cstar{\Lambda} $ or from a generalized gauge action. In particular, for a finite strongly connected $ k $-graph, in \cite{ALRS,FGLP,McN}, one can endow $ \Cstar{\Lambda} $ with a (generalized) gauge dynamics and show the existence of unique KMS states. Here, we are able to recover some of the results in  \cite{ALRS,FGLP,McN} from a different perspective, using the Rulle-Perron-Frobenius theorem and the generalized gauge dynamics that we obtain from our description of $ \ZZ{\DRG{X}{\sigma}}{\R} $ given in Proposition \ref{cor:The Classification Theorem for Continuous Real-Valued 1-Cocycles on Deaconu-Renault Groupoids}.

In a follow-up paper, we plan to extend our results to topological $ k $-graphs.
	
We now outline the structure of the paper. Section \ref{sec:ruelle-triples-and-ops} introduces classical Ruelle triples, triples that satisfy the unique positive eigenvalue condition (see  Definition \ref{RPF Ruelle Triple}), and Ruelle operators. These are basic objects that we will generalize to higher-dimensions in Section \ref{sec:Ruelle-Dyn-systems}. We also  review several essential results of P. Walters, R. Bowen and D. Ruelle in this section. In Section \ref{sec:class-cocycles}, we review the construction of the groupoid $\DRG{X}{\sigma}$ associated to a finite family $\sigma$ of commuting local homeomorphisms of a compact metric space $X$, and then briefly review level one semigroup cohomology, continuous groupoid cohomology, and relate the two. We also give an algebraic way of constructing all continuous $ 1 $-cocycles  
both in the semigroup and groupoid case. Our main interest are continuous real-valued $ 1 $-cocycles on $ \DRG{X}{\sigma} $. In Section \ref{sec:Ruelle-Dyn-systems}, we introduce $ k $-Ruelle dynamical systems, the related families  of commuting Ruelle operators and their duals,  and their eigenmeasures.  In Section \ref{sec:Rad-Nyk-Meas-KMS}, we use the results of the previous sections to consider the Radon-Nikodym Problem for these groupoids, which provides a link between quasi-invariant measures for the groupoids $\DRG{X}{\sigma}$ and KMS states for a generalized gauge dynamics. In particular, we prove that if the generalized Ruelle operator associated to a $k$-Ruelle system has an eigenmeasure with eigenvalue $ 1 $, then there exists a KMS state for the generalized gauge dynamics coming from certain groupoid $ 1 $-cocycles related to the groupoid $ C^{\ast} $-algebra. Finally, in Section \ref{sec:gene-gauge-dyn-higher-rank-graphs}, we apply the results obtained thus far to answer some  existence and uniqueness questions concerning KMS states for a generalized gauge dynamics associated to higher-rank graphs.

%%%%%%%%%%%%%%%%%%%%%%%%%%%%%%%%%%%%%%%%%%%%%%%%%%%%%%%%%%%%%%%%%%%%%%%%%%%%%%%%%%%%%%%%%%%%%%%%%%%%%%%%%%%%%%%%%%%%%%%%%%%%%%%%%%%%%%%%%%%%%%%
\subsection{Notation and conventions}
%%%%%%%%%%%%%%%%%%%%%%%%%%%%%%%%%%%%%%%%%%%%%%%%%%%%%%%%%%%%%%%%%%%%%%%%%%%%%%%%%%%%%%%%%%%%%%%%%%%%%%%%%%%%%%%%%%%%%%%%%%%%%%%%%%%%%%%%%%%%%%%

In the sequel we will be using the  following notational conventions.  We denote by $ \N $ the semigroup of natural integers $ \SSet{0,1,2,\ldots} $, and by $ \N_{>0} $ the set of positive elements of $\N$. For fixed $ k \in \N_{>0} $,  we denote by     $\N^k$ the semigroup of all ordered k-tuples of elements of $\N$, and by  $ \SqBr{k} $ the set $\SqBr{k}=\SSet{1,\ldots,k} $.  We define the length $\Abs{n}$ of  an element $n \in \N^k$  by:   $\Abs{n} := n_{1} + \cdots + n_{k} $. 

For every compact Hausdorff space $ X $, and  topological locally compact group $H$, we let $ C(X,H) $ be the group of continuous functions from $X$ to $H$. In many instances $H=\R $ in this paper. 
We will also  let $ \M{X} $ denote the Banach space of finite signed Borel measures on the Borel subsets of $ X $, which is isometrically isomorphic to the dual space $ C(X,\R)' $ of $ C(X,\R) $. \footnote{By \cite[pp. 91--92]{WToolKit}, any measure on a locally compact and second countable space that is finite on compact sets is Radon, hence regular.}

For every (locally) compact Hausdorff \'{e}tale groupoid $ \G $, there is a standard dense linear embedding of $ \Cc{\G} $ into $ \CstarRed{\G} $. The groupoids that we study are amenable, so unless there is a danger of confusion, we shall identify $ f \in \Cc{\G} $ with its image (also denoted by $ f $) in $ \CstarRed{\G} \cong \Cstar{\G} $.
	
In what follows, $ X $ will always denote a (non-empty) compact Hausdorff topological space.

%%%%%%%%%%%%%%%%%%%%%%%%%%%%%%%%%%%%%%%%%%%%%%%%%%%%%%%%%%%%%%%%%%%%%%%%%%%%%%%%%%%%%%%%%%%%%%%%%%%%%%%%%%%%%%%%%%%%%%%%%%%%%%%%%%%%%%%%%%%%%%%
\subsection*{Acknowledgments}
%%%%%%%%%%%%%%%%%%%%%%%%%%%%%%%%%%%%%%%%%%%%%%%%%%%%%%%%%%%%%%%%%%%%%%%%%%%%%%%%%%%%%%%%%%%%%%%%%%%%%%%%%%%%%%%%%%%%%%%%%%%%%%%%%%%%%%%%%%%%%%%

This work was also partially supported by Simons Foundation Collaboration grants \#523991 (C.F.), \#353626 (A.K.) and \#316981 (J.P.). C.F. also thanks the sabbatical program at the University of Colorado Boulder for support.

The authors also thank the referee for helpful suggestions.

%%%%%%%%%%%%%%%%%%%%%%%%%%%%%%%%%%%%%%%%%%%%%%%%%%%%%%%%%%%%%%%%%%%%%%%%%%%%%%%%%%%%%%%%%%%%%%%%%%%%%%%%%%%%%%%%%%%%%%%%%%%%%%%%%%%%%%%%%%%%%%%
\section{Ruelle triples and Ruelle operators} \label{sec:ruelle-triples-and-ops} %%%%%%%%%%%%%%%%%%%%%%%%%%%%%%%%%%%%%%%%%%%%%%%%%%%%%%%%%%%%%%
%%%%%%%%%%%%%%%%%%%%%%%%%%%%%%%%%%%%%%%%%%%%%%%%%%%%%%%%%%%%%%%%%%%%%%%%%%%%%%%%%%%%%%%%%%%%%%%%%%%%%%%%%%%%%%%%%%%%%%%%%%%%%%%%%%%%%%%%%%%%%%%
	
We begin by defining  Ruelle triples and Ruelle operators (sometimes called {\it transfer operators}), which were introduced in \cite{Ru1} in the case of totally disconnected spaces, and generalized to arbitrary compact metric spaces by P. Walters in \cite{W1} and \cite{W}. These will be the basic objects of concern in this paper. Ruelle operators are important tools in mathematical physics, particularly thermodynamics, and yield a formulation of a ``continuous'' extension of the classical Perron-Frobenius Theorem.  In \cite{KR}, A. Kumjian and J. Renault  associated KMS states to Ruelle operators constructed on a groupoid arising from a single expansive map on a compact metric space, and in  \cite{IK}, M. Ionescu and A. Kumjian related the associated states to a Hausdorff measure on $ X $, which led to applications to KMS states on Cuntz $ C^{\ast} $-algebras, $ C^{\ast} $-algebras arising from directed graphs, and $ C^{\ast} $-algebras associated to fractafolds. \\

% ---------------------------------------------------------------------------------------------------------------------------------------------

\begin{Def}[Ruelle Triples and Operators] \label{Ruelle Triple}
\hfill
\begin{enumerate}
\item[(1)]
A \emph{Ruelle triple} is an ordered triple $ \Trip{X}{T}{\varphi} ,$ where
\begin{enumerate}
\item
$ X $ is a compact metric space.

\item
$ T : X \to X $ is a surjective local homeomorphism.

\item
$ \varphi: X \to \R $ is a continuous function, that is, $ \varphi \in \Cont{X,\R} $.
\end{enumerate}

\item[(2)]
The \emph{Ruelle operator} associated to a Ruelle triple $ \Trip{X}{T}{\varphi} $ is the bounded linear operator
$$
\Ruelle{X}{T}{\varphi}: \Cont{X,\R} \to \Cont{X,\R}
$$
defined by, for all $f \in \Cont{X,\R}, ~ \forall x \in X$: 
\begin{equation} \label{eq:def of ruelle operator}
\FUNC{\func{\Ruelle{X}{T}{\varphi}}{f}}{x} \df \sum_{y \in \InvIm{T}{\SSet{x}}} e^{\func{\varphi}{y}} \func{f}{y}.
\end{equation}
\end{enumerate}
\end{Def}

% ---------------------------------------------------------------------------------------------------------------------------------------------

Our goal is to extend some results from \cite{KR} and \cite{IK} from a single local homeomorphisms to commuting k-tuples of local homeomorphisms on $ X $ in part by employing cohomological methods. The following lemma follows from \cite[Proposition 2.2]{ER}. \\

% ---------------------------------------------------------------------------------------------------------------------------------------------

\begin{Lem}[Composition of Ruelle Operators] \label{A Formula for the Composition of Two Ruelle Operators}
Let $ \Trip{X}{S}{\varphi} $ and $ \Trip{X}{T}{\psi} $ be Ruelle triples. Then $ \Trip{X}{S \circ T}{\varphi \circ T + \psi} $  is a Ruelle triple, and
$$
\Ruelle{X}{S}{\varphi} \circ \Ruelle{X}{T}{\psi} = \Ruelle{X}{S \circ T}{\varphi \circ h + \psi}.
$$
\end{Lem}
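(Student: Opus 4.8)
The plan is to check the two assertions in turn: first that the composite data again meets Definition \ref{Ruelle Triple}(1), and then that the operator identity drops out of expanding the defining sum \eqref{eq:def of ruelle operator} twice and reindexing. (I read the ``$ h $'' in the displayed identity as $ T $, since the triple $ \Trip{X}{S \circ T}{\varphi \circ T + \psi} $ named just before it requires the potential $ \varphi \circ T + \psi $.) That the composite is a Ruelle triple is inherited clause by clause: the space $ X $ is unchanged; $ S \circ T $ is surjective as a composite of surjections and is a local homeomorphism, since near any $ y $ one restricts $ T $ to an open set mapped homeomorphically onto a neighbourhood of $ \func{T}{y} $ on which $ S $ is itself a homeomorphism onto its image; and $ \varphi \circ T + \psi \in \Cont{X,\R} $ because $ T $ is continuous and sums of continuous functions are continuous.

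For the operator identity I would apply \eqref{eq:def of ruelle operator} twice: for $ f \in \Cont{X,\R} $ and $ x \in X $,
\[
\FUNC{\func{\Br{\Ruelle{X}{S}{\varphi} \circ \Ruelle{X}{T}{\psi}}}{f}}{x} = \sum_{z \in \InvIm{S}{\SSet{x}}} \sum_{y \in \InvIm{T}{\SSet{z}}} e^{\func{\varphi}{z} + \func{\psi}{y}} \func{f}{y}.
\]
The crux is to reindex this double sum. I would argue that $ \Pair{z}{y} \mapsto y $ is a bijection from the set of admissible pairs onto $ \InvIm{S \circ T}{\SSet{x}} $, with inverse $ y \mapsto \Pair{\func{T}{y}}{y} $: indeed $ \func{\Br{S \circ T}}{y} = x $ holds exactly when $ z \df \func{T}{y} $ satisfies $ \func{S}{z} = x $, and this $ z $ is the unique intermediate point. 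Since $ \func{\varphi}{z} = \func{\varphi}{\func{T}{y}} $ along this bijection, the double sum collapses to $ \sum_{y \in \InvIm{S \circ T}{\SSet{x}}} e^{\func{\varphi}{\func{T}{y}} + \func{\psi}{y}} \func{f}{y} $, which is precisely $ \FUNC{\func{\Ruelle{X}{S \circ T}{\varphi \circ T + \psi}}{f}}{x} $, as required.

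The point that genuinely needs attention, more a subtlety than an obstacle, is the legitimacy of iterating and reindexing the sums, which rests on their finiteness. Because $ S $ and $ T $ are local homeomorphisms, every fibre $ \InvIm{T}{\SSet{z}} $ is discrete, and being closed in the compact space $ X $ it is finite; the same applies to the fibres of $ S \circ T $. With finiteness established the manipulation is purely formal, and this is also the content of \cite[Proposition 2.2]{ER}, from which the lemma was said to follow.
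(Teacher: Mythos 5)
Your proof is correct, and your reading of the misprint is the right one: the $h$ in the displayed identity must be $T$, as the triple $ \Trip{X}{S \circ T}{\varphi \circ T + \psi} $ named immediately before it confirms. Note, however, that the paper itself supplies no written argument for this lemma: it is stated as a consequence of \cite[Proposition 2.2]{ER}, so there is no in-paper proof to match step for step. What you have done is supply the elementary computation that the citation encapsulates, and all three of its ingredients are sound: the triple verification (composites of surjective local homeomorphisms are surjective local homeomorphisms, using that local homeomorphisms are open maps so the shrunken images stay open); the reindexing of the double sum via the bijection $ y \mapsto \Pair{\func{T}{y}}{y} $ between $ \InvIm{S \circ T}{\SSet{x}} $ and the admissible pairs, which is where the potential $ \varphi \circ T + \psi $ visibly emerges; and the finiteness of fibres (discrete because $T$ is locally injective, closed as preimages of points, hence finite in the compact space $X$), which is exactly what licenses the formal interchange and collapse of the sums. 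Your self-contained version is arguably preferable in context, since it makes the lemma independent of \cite{ER} and makes transparent why the cocycle-flavoured potential $ \varphi \circ T + \psi $ — the prototype for the module cocycle condition used later in the paper — is forced by the composition.
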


% ----------------------------------------------------------------------------------------------------------------------------------------------
	
We will now define an important subclass of Ruelle triples, those for which the positive eigenvalue problem for the dual Ruelle operator has a unique solution. These Ruelle triples enjoy important fixed-point  properties, and admit generalizations to dynamical systems that will be described in Section \ref{sec:Ruelle-Dyn-systems}. \\
	
% ---------------------------------------------------------------------------------------------------------------------------------------------
	
\begin{Def}  \label{RPF Ruelle Triple}
A Ruelle triple $ \Trip{X}{T}{\varphi} $ is said to satisfy the  \emph{unique positive eigenvalue condition} if there exists a unique ordered pair $ \Pair{\lambda}{\mu} $ such that
\begin{enumerate}
\item[(1)]
$ \lambda$ is a positive real number.
			
\item[(2)]
$ \mu $ is a Borel probability measure on $ X $.
			
\item[(3)]
If we denote by  $ \Br{\Ruelle{X}{T}{\varphi}}^{\ast}: \M{X}  \to \M{X} $  the dual of the Ruelle operator $ \Ruelle{X}{T}{\varphi} $, then
\[ \func{\Br{\Ruelle{X}{T}{\varphi}}^{\ast}}{\mu} = \lambda \mu .\]
\end{enumerate}
\end{Def}

% ---------------------------------------------------------------------------------------------------------------------------------------------

Ruelle's classical  result, known as the ``Ruelle-Perron-Frobenius (RPF) Theorem'', generalizes the seminal Perron-Frobenius Theorem for primitive matrices to subshifts of finite type, and gives a sufficient condition for a Ruelle triple that satisfies Definition  \ref{RPF Ruelle Triple} \cite{Ru1,Ru2}. The RPF theorem below is taken from \cite[Theorem 2.2]{Exel}.

To introduce the required notation to state the RPF theorem, fixing $ k \in \N_{> 0} $, let $ A = \Seq{A_{i,j}}{i,j \in \SqBr{k}}$ be an $ \Br{n \times n} $ zero-one matrix with no row or column of zeros, and let $ \Pair{\Sigma_{A}}{\sigma} $ be the associated (one-sided) subshift of finite type, where $ \Sigma_{A} $ is the compact topological subspace of the infinite product space $ \prod_{j \in \N} \SqBr{k} $ defined by
$$
\Sigma_{A} \df \Set{x = \Br{x_{0},x_{1},x_{2},\ldots} \in \prod_{j \in \N} \SqBr{k}}{A_{x_{i},x_{i + 1}} = 1, ~ \forall i \geq 0},
$$
and $ \sigma: \Sigma_{A} \to \Sigma_{A} $ is the ``left shift'' given by
$$
\func{\sigma}{x_{0},x_{1},x_{2},\ldots} = \Br{x_{1},x_{2},x_{3},\ldots}.
$$
Moreover, given a real number $ \beta \in \Pair{0}{1} $, we define a compatible metric $ d $ on $ \Sigma_{A} $ by setting, for $ x,y \in \Sigma_{A} $ and $ x \neq y $, $ \func{d}{x,y} = \beta^{\func{N}{x,y}} $, where $ \func{N}{x,y} $ is the least integer $ N \in \N $ such that $ x_{i} \not=  y_{i} $. Furthermore  $ \func{\min}{\varnothing} \df \infty $ by convention.\\

We can now state the Ruelle-Perron-Frobenius Theorem as presented by R. Exel, see \cite[Theorem 2.2]{Exel} and \cite[Proposition 2.3]{Exel}.
% ---------------------------------------------------------------------------------------------------------------------------------------------

\begin{Thm}[Ruelle-Perron-Frobenius Theorem]  \label{thm:RPF-Theorem}
With notation as above let $ \varphi $ be a continuous real-valued function defined on $ \Sigma_{A} $. Suppose that
\begin{enumerate}

\item[(1)]
There exists a positive integer $ m $ such that $ A^{m} > 0 $ (in the sense that all entries are positive) and

\item[(2)]
$ \varphi $ is H\"{o}lder-continuous.
\end{enumerate}
Then there exist a strictly positive function $ h \in \Cont{\Sigma_{A},\R} $, 
a Borel probability measure $ \mu $ on $ \Sigma_{A} $, and a positive real number $ \lambda $ such that
\begin{enumerate}
\item[(a)]
$ \func{\Br{\Ruelle{\Sigma_A}{\sigma}{\varphi}}}{h} = \lambda h $ and

\item[(b)]
$ \func{\Br{\Ruelle{\Sigma_A}{\sigma}{\varphi}}^{\ast}}{\mu} = \lambda \mu $.
\end{enumerate}
In particular    $ \Trip{\Sigma_A}{\sigma}{\varphi} $  also satisfies the  unique positive eigenvalue condition of Definition \ref{RPF Ruelle Triple}.
\end{Thm}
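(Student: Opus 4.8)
The statement is the classical Ruelle--Perron--Frobenius theorem (Ruelle, Bowen, Walters), quoted here from \cite{Exel}; nonetheless, here is how I would organize a self-contained argument. The plan is to produce the data $\Br{\lambda, \mu, h}$ in two essentially independent stages and then treat uniqueness separately, with the Hölder hypothesis entering decisively through a bounded-distortion estimate and the primitivity hypothesis $A^{m} > 0$ entering to globalize that estimate, force strict positivity of $h$, and drive mixing.

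First I would obtain the eigenmeasure by a fixed-point argument, which requires no regularity of $\varphi$. The set $P$ of Borel probability measures on $\Sigma_{A}$ is convex and weak-$\ast$ compact, and since $\sigma$ is a surjective local homeomorphism the function $\func{\Ruelle{\Sigma_A}{\sigma}{\varphi}}{1}$ (where $1$ is the constant function) is strictly positive, so the normalized dual map $\nu \mapsto \func{\Br{\Ruelle{\Sigma_A}{\sigma}{\varphi}}^{\ast}}{\nu} / \Int{\Sigma_A}{\func{\Ruelle{\Sigma_A}{\sigma}{\varphi}}{1}}{\nu}$ sends $P$ continuously into itself. The Schauder--Tychonoff theorem yields a fixed point $\mu$, that is $\func{\Br{\Ruelle{\Sigma_A}{\sigma}{\varphi}}^{\ast}}{\mu} = \lambda \mu$ with $\lambda = \Int{\Sigma_A}{\func{\Ruelle{\Sigma_A}{\sigma}{\varphi}}{1}}{\mu} > 0$. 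This settles conditions (1)--(2) and part (b), and I would then rescale $\varphi$ by $\log \lambda$ so that henceforth $\lambda = 1$ and $\func{\Br{\Ruelle{\Sigma_A}{\sigma}{\varphi}}^{\ast}}{\mu} = \mu$; note this normalization gives $\Int{\Sigma_A}{g_{n}}{\mu} = 1$ for every $n$, where $g_{n} \df \func{\Ruelle{\Sigma_A}{\sigma}{\varphi}^{n}}{1}$.

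The technical heart is the bounded-distortion estimate. Writing $\operatorname{var}_{n} \varphi$ for the supremum of $\Abs{\func{\varphi}{x} - \func{\varphi}{y}}$ over pairs agreeing in their first $n$ coordinates, Hölder continuity together with the metric $\func{d}{x,y} = \beta^{\func{N}{x,y}}$ gives $\operatorname{var}_{n} \varphi \le C \beta^{\alpha n}$, so the Birkhoff sums of $\varphi$ along matching inverse branches of $\sigma^{n}$ differ by a uniformly summable amount. Within a single cylinder this yields $\func{g_{n}}{x} \le D_{0}\, \func{g_{n}}{y}$ with $D_{0}$ independent of $n$, and primitivity $A^{m} > 0$ (which allows one to prepend a fixed connecting word reaching every symbol) upgrades this to a \emph{global} comparison $\func{g_{n}}{x} \le D\, \func{g_{n}}{y}$ for all $x, y \in \Sigma_{A}$. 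Combined with the normalization $\Int{\Sigma_A}{g_{n}}{\mu} = 1$, this forces $1/D \le g_{n} \le D$ uniformly, and the same variation bound makes the $g_{n}$ equi-Hölder. Passing to the Cesàro averages $h_{n} = \tfrac{1}{n} \sum_{j = 0}^{n - 1} g_{j}$, Arzelà--Ascoli extracts a uniform limit $h \in \Cont{\Sigma_A, \R}$ with $1/D \le h \le D$, so $h$ is automatically strictly positive. The telescoping identity $\func{\Ruelle{\Sigma_A}{\sigma}{\varphi}}{h_{n}} - h_{n} = \tfrac{1}{n}\Br{g_{n} - 1} \to 0$ uniformly, together with continuity of the operator, gives $\func{\Ruelle{\Sigma_A}{\sigma}{\varphi}}{h} = h$, which is part (a).

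The step I expect to be the main obstacle is uniqueness of $\Br{\lambda, \mu}$, since existence used primitivity only crudely. My plan is to pass to the normalized Markov operator $L f \df \func{\Ruelle{\Sigma_A}{\sigma}{\varphi}}{h f} / h$, which satisfies $\func{L}{1} = 1$ and $L^{\ast}\Br{h \mu} = h \mu$. Reusing the distortion estimate, now together with primitivity in an essential way, I would show that $L$ is mixing, in the sense that $\func{L^{n}}{f}$ converges uniformly to the constant $\Int{\Sigma_A}{f}{h\mu}$ for every $f \in \Cont{\Sigma_A, \R}$; equivalently, that $L$ contracts a suitable Birkhoff cone of positive Hölder functions in the Hilbert projective metric with a uniform factor. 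This mixing statement forces $h \mu$ to be the only $L^{\ast}$-invariant probability measure, and unwinding the definition of $L$ translates this into the assertion that $\mu$ is the unique probability eigenmeasure of $\Br{\Ruelle{\Sigma_A}{\sigma}{\varphi}}^{\ast}$ and that its eigenvalue must equal $\lambda$ --- precisely the unique positive eigenvalue condition of \autoref{RPF Ruelle Triple}. The delicate point throughout is extracting the geometric contraction from the Hölder hypothesis; without it the distortion bounds fail, and both strict positivity of $h$ and uniqueness of $\Br{\lambda, \mu}$ break down.
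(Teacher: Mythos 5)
The paper offers no proof of this statement to compare yours against: it imports the theorem wholesale from \cite[Theorem 2.2 and Proposition 2.3]{Exel}, which in turn rests on the classical Ruelle--Bowen--Walters theory, and then uses it only as a black box (via Theorem \ref{The Walters Criteria and the Jiang-Ye Criterion Imply the RPF Property} and the higher-rank results of Sections \ref{sec:Ruelle-Dyn-systems}--\ref{sec:gene-gauge-dyn-higher-rank-graphs}). Your outline is the standard classical proof and is essentially sound: the Schauder--Tychonoff fixed point for the normalized dual map (which, as you say, needs only continuity of $\varphi$ and surjectivity of $\sigma$) gives (b); summability of $\operatorname{var}_{n}\varphi \leq C\beta^{\alpha n}$ gives the distortion estimate; and the Ces\`{a}ro/Arzel\`{a}--Ascoli extraction with the telescoping identity $\mathcal{L}h_{n} - h_{n} = \tfrac{1}{n}\Br{g_{n} - 1} \to 0$ gives (a) with $h$ pinched between $1/D$ and $D$. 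Two points deserve care. First, the connecting-word trick from $A^{m} > 0$ compares $g_{n}\Br{x}$ with $g_{n + m}\Br{y}$, not with $g_{n}\Br{y}$ as you wrote; the global two-sided bound is recovered only by feeding in the normalization $\Int{\Sigma_{A}}{g_{n}}{\mu} = 1$ (integrate the inequality in $y$ to get $g_{n} \leq D'$, and use $1 \leq \sup g_{n} \leq D' \inf g_{n + m}$ for the lower bound) --- your sketch elides this loop-closing step, though the mechanism you name is the right one. Second, the cone-contraction/mixing claim for the Markov operator $Lf = \mathcal{L}\Br{hf}/h$, which you correctly identify as the heart of uniqueness, is asserted rather than carried out; granting it, the unwinding computation for a putative eigenmeasure $\nu$ with eigenvalue $\kappa$, namely $\Int{\Sigma_{A}}{L^{n}f}{\tilde{\nu}} = \kappa^{n}\Int{\Sigma_{A}}{hf}{\nu}/\Int{\Sigma_{A}}{h}{\nu}$ with $\tilde{\nu} = h\nu/\Int{\Sigma_{A}}{h}{\nu}$, does force $\kappa = \lambda$ (take $f = 1$) and then $\nu = \mu$ since $h > 0$, so the reduction you describe is correct. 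In short: your route is the genuine classical argument that the paper delegates to the literature, it is more self-contained than what the paper provides, and modulo the two standard technical steps just flagged it is complete in outline.
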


% ---------------------------------------------------------------------------------------------------------------------------------------------

In the sequel, we will also refer to the Ruelle-Perron-Frobenius Theorem as the RPF Theorem. In \cite{W,W2}, P. Walters gave criteria for the RPF Theorem to hold for more general Ruelle triples $ \Trip{X}{T}{\varphi} $, which was modified by Kumjian and Renault in \cite{KR}, requiring that $ X $ be a metric space, $ T $ be positively expansive and exact, and that $ \varphi $ obey some summability condition. We will now detail these results. \\

% ---------------------------------------------------------------------------------------------------------------------------------------------

\begin{Def} \label{The Walters Criteria and the Jiang-Ye Conditions}
Let $ \Trip{X}{T}{\varphi} $ be a Ruelle triple, and let $ d $ be the metric on $ X $.  Consider the three conditions listed below. 

\begin{enumerate}
\item[(1)]
$ T $ is \emph{positively expansive}, i.e., there is an $ \epsilon > 0 $ such that for all distinct $ x,y \in X $, there exists an $ n \in \N $  such that $ \func{d}{\func{T^{n}}{x},\func{T^{n}}{y}} \geq \epsilon $.

\item[(2)]
$ T $ is \emph{exact}, i.e., for every non-empty open subset $ U $ of $ X $, there exists an $ n \in \N $ such that $ \Im{T^{n}}{U} = X $.

\item[(3)]
There exists a compatible metric $ d' $ on $ X $  and positive numbers $ \delta > 0 $ and $ C > 0 $ with the property that for all $n \in \N_{> 0} $ and  for all $ x,y \in X $ we have that 
$ \func{d'}{\func{T^{i}}{x},\func{{T}^{i}}{y}} \leq \delta $ for all $ i \in \SSet{0,1,\ldots,n - 1} $ implies
$$
\Abs{\sum_{i = 0}^{n - 1} \func{\varphi}{\func{T^{i}}{x}} - \func{\varphi}{\func{T^{i}}{y}}} \leq C.
$$
\end{enumerate}
\end{Def}

% ---------------------------------------------------------------------------------------------------------------------------------------------
We say that $ (X, T,\varphi)$ satisfies the {\it Walters conditions} if it satisfies condition (1) and (2) above, and it satisfies the {\it Bowen's condition} (\cite{Bow}) if it satisfies Condition (3) above.
Moreover, $ T $ is positively expansive if and only if  there is an open neighborhood $ U $ of $ \Diag{X} $, the diagonal of $ X $, such that for all distinct $ x,y \in X $, there exists an $ n \in \N $ such that $ \Pair{\func{T^{n}}{x}}{\func{T^{n}}{y}} \notin U $.

In all of our examples the  function $\varphi$  is H\"older continuous,  and that together with condition (1)
implies Bowen's condition, as noted in    the following well-known proposition   whose proof is sketched in \cite[pg. 2071]{KR}. \\

% ---------------------------------------------------------------------------------------------------------------------------------------------

\begin{Prop}\label{pos expansive  Holder} Let $ \Trip{X}{T}{\varphi} $ be a Ruelle triple, with $T$ positively expansive. 
If $ \varphi $ is H\"{o}lder-continuous with respect to a compatible metric $ d $ on $ X $, then condition (3) of Definition \ref{The Walters Criteria and the Jiang-Ye Conditions} is satisfied for $ d $ by $ \varphi $ with respect to $ T $.
\end{Prop}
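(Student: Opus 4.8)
The plan is to verify condition~(3) with the choice $ d' \df d $, so that what must be produced are two constants $ \delta > 0 $ and $ C > 0 $ for which the Birkhoff-sum estimate holds uniformly in $ n $, $ x $, and $ y $. First I would record the hypothesis in quantitative form: Hölder continuity of $ \varphi $ with respect to $ d $ supplies constants $ K > 0 $ and $ \alpha > 0 $ with $ \Abs{\func{\varphi}{x} - \func{\varphi}{y}} \leq K \func{d}{x,y}^{\alpha} $ for all $ x,y \in X $. Granting the exponential-contraction estimate isolated in the next paragraph, the verification of condition~(3) reduces to a single geometric-series computation.

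The key dynamical input, and the step I expect to be the main obstacle, is the following consequence of positive expansiveness: there exist $ \delta > 0 $ and $ \lambda > 1 $ such that whenever $ x,y \in X $ satisfy $ \func{d}{\func{T^{i}}{x},\func{T^{i}}{y}} \leq \delta $ for every $ i \in \SSet{0,1,\ldots,n - 1} $, one has
\[ \func{d}{\func{T^{i}}{x},\func{T^{i}}{y}} \leq \lambda^{-(n - 1 - i)} \delta \qquad \text{for each } i \in \SSet{0,1,\ldots,n - 1}. \]
To prove this I would first use that $ T $ is a local homeomorphism on the compact space $ X $ to fix, by a Lebesgue-number argument, a uniform scale below which $ T $ is injective on balls. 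Positive expansiveness --- in the open-neighborhood-of-the-diagonal reformulation noted just after Definition~\ref{The Walters Criteria and the Jiang-Ye Conditions} --- would then be promoted, via a compactness argument, to a local expanding estimate: after shrinking $ \delta $, any $ x,y $ with $ \func{d}{x,y} \leq \delta $ and $ \func{d}{\func{T}{x},\func{T}{y}} \leq \delta $ satisfy $ \func{d}{\func{T}{x},\func{T}{y}} \geq \lambda \func{d}{x,y} $. Setting $ a_{i} \df \func{d}{\func{T^{i}}{x},\func{T^{i}}{y}} $, the hypothesis then yields $ a_{i + 1} \geq \lambda a_{i} $ for $ 0 \leq i \leq n - 2 $, hence $ a_{i} \leq \lambda^{-(n - 1 - i)} a_{n - 1} \leq \lambda^{-(n - 1 - i)} \delta $, which is the displayed bound. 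The real content sits in this local-expanding step, which is precisely the assertion that a positively expansive open map is expanding; securing a definite factor $ \lambda > 1 $ (rather than mere non-contraction) requires a genuine compactness/renormalization argument rather than a formal manipulation, and this is where I expect the difficulty to lie.

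With the contraction estimate available, I would finish by summing the Hölder bound along the orbit:
\[ \Abs{\sum_{i = 0}^{n - 1} \func{\varphi}{\func{T^{i}}{x}} - \func{\varphi}{\func{T^{i}}{y}}} \leq K \sum_{i = 0}^{n - 1} a_{i}^{\alpha} \leq K \delta^{\alpha} \sum_{i = 0}^{n - 1} \lambda^{-\alpha(n - 1 - i)} \leq \frac{K \delta^{\alpha}}{1 - \lambda^{-\alpha}}. \]
The final quantity is finite and independent of $ n $, $ x $, and $ y $, so putting $ C \df K \delta^{\alpha} / \Br{1 - \lambda^{-\alpha}} $ establishes condition~(3) of Definition~\ref{The Walters Criteria and the Jiang-Ye Conditions} for $ d' = d $, completing the proof.
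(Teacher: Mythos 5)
Your overall architecture --- establish a one-step expansion estimate, deduce $d(T^{i}x,T^{i}y)\leq\lambda^{-(n-1-i)}\delta$, then sum a geometric series against the H\"older bound --- is exactly the argument sketched in \cite[p.~2071]{KR}, which is the proof the paper defers to, and your final computation is correct. The genuine gap is the step you yourself flagged as the main obstacle: the claim that positive expansiveness can be ``promoted,'' by a compactness argument, to an expansion estimate $d(Tx,Ty)\geq\lambda\,d(x,y)$ with a definite $\lambda>1$ in the \emph{given} metric $d$. This is false, and no compactness or renormalization argument can repair it: positive expansiveness is a purely topological property, while the ratio $d(Tx,Ty)/d(x,y)$ depends on the choice of compatible metric. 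Concretely, let $T$ be the doubling map on the circle with arc-length metric $\rho$, and set $d=\omega\circ\rho$ where $\omega(t)=1/\log(1/t)$ for small $t$ (extended suitably); $\omega$ is increasing and concave near $0$, so $d$ is a compatible metric, and $T$ remains positively expansive. But for $x,y$ close, $d(Tx,Ty)/d(x,y)=\omega(2t)/\omega(t)\to 1$ as $t=\rho(x,y)\to 0$, so for no $\delta>0$ does a factor $\lambda>1$ exist, even in your weaker conditional form. Worse, the conclusion itself fails for this metric: taking $\varphi(x)=d(x,x_{0})$ with $x_{0}$ the fixed point of $T$, $\varphi$ is Lipschitz (hence H\"older) with respect to $d$, yet for $y=x_{0}$ and $\rho(x,x_{0})=2^{-n}\varepsilon$ with $\varepsilon$ small, the orbit pair stays $d$-close for $n$ steps while $\sum_{i=0}^{n-1}\bigl(\varphi(T^{i}x)-\varphi(T^{i}y)\bigr)=\sum_{j=1}^{n}\bigl(j\log 2+\log(1/\varepsilon)\bigr)^{-1}\sim\log n$, so no uniform constant $C$ exists. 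So the key lemma your proof rests on is not merely hard; it is false for a general compatible metric.

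What actually closes the gap is Reddy's theorem \cite{Reddy} (already in the paper's bibliography): a positively expansive map of a compact metrizable space is expanding with respect to \emph{some} compatible metric. The sketch in \cite{KR} runs precisely your geometric-series computation, but in that adapted metric; correspondingly, the hypothesis ``$\varphi$ is H\"older-continuous with respect to a compatible metric $d$'' must be read with $d$ an adapted (locally expanding) metric --- as it is in all of the paper's examples --- since H\"older continuity does not transfer between arbitrary compatible metrics, and the counterexample above shows the statement fails verbatim for a non-adapted one. In short: your summation step is the standard and correct endgame, but the dynamical input you need is the existence of an adapted metric (a theorem about changing the metric), not an expansion property of the metric you were handed.
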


% ---------------------------------------------------------------------------------------------------------------------------------------------

The main results of \cite{W} and \cite{JY} yield the following theorem. \\

% -------------------------------------------------------------------------------------------------------------------------
	
\begin{Thm}  \label{The Walters Criteria and the Jiang-Ye Criterion Imply the RPF Property}
A Ruelle triple satisfies the unique positive eigenvalue condition of 
Definition \ref{RPF Ruelle Triple} if it satisfies the conditions in Definition  \ref{The Walters Criteria and the Jiang-Ye Conditions}.
\end{Thm}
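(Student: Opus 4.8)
The plan is to reduce the statement to the classical Ruelle--Perron--Frobenius theorem for distance-expanding maps proved by Walters in \cite{W}, using the result of Jiang and Ye in \cite{JY} to supply a metric in which $T$ genuinely expands distances. The crux is that positive expansiveness (condition (1)) is a topological hypothesis, whereas Walters's machinery is phrased for maps that expand distances in a fixed metric by a definite factor; the role of \cite{JY} is precisely to bridge this gap. The existence half of the conclusion is comparatively soft and does not need expansiveness at all: since $T$ is a surjective local homeomorphism, $\func{\Ruelle{X}{T}{\varphi}}{1}$ is a strictly positive continuous function on the compact space $X$, so the normalized dual $\nu \mapsto \func{\Br{\Ruelle{X}{T}{\varphi}}^{\ast}}{\nu}/\func{\nu}{\func{\Ruelle{X}{T}{\varphi}}{1}}$ is a weak-$\ast$ continuous self-map of the weak-$\ast$ compact convex set of Borel probability measures, and a Schauder--Tychonoff fixed point $\mu$ produces $\func{\Br{\Ruelle{X}{T}{\varphi}}^{\ast}}{\mu} = \lambda\mu$ with $\lambda = \func{\mu}{\func{\Ruelle{X}{T}{\varphi}}{1}} > 0$. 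The entire force of the hypotheses therefore lies in the \emph{uniqueness} of the pair $\Pair{\lambda}{\mu}$.

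First I would invoke the structure theory of positively expansive maps: because $T$ is a positively expansive local homeomorphism of the compact metric space $X$, there is a compatible metric $d'$ on $X$ together with constants $\rho > 1$ and $\eta > 0$ such that $\func{d'}{x,y} \le \eta$ forces $\func{d'}{\func{T}{x},\func{T}{y}} \ge \rho \, \func{d'}{x,y}$. This is the contribution of \cite{JY}: it upgrades the merely asymptotic separation in condition (1) to a one-step expansion estimate, so that in the metric $d'$ the map $T$ is an open distance-expanding map of the kind Walters treats. Exactness (condition (2)) is a purely topological property (it refers only to images of open sets), hence is unaffected by the change of metric and persists for $T$ in the metric $d'$.

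Next I would confirm that the potential $\varphi$ meets Walters's hypotheses in the new metric. Bowen's condition (condition (3)) bounds the orbit sums $\Abs{\sum_{i=0}^{n-1} \func{\varphi}{\func{T^{i}}{x}} - \func{\varphi}{\func{T^{i}}{y}}}$ uniformly over orbit segments that stay within $\delta$ of one another, which is exactly the bounded-distortion input driving Walters's convergence estimates. Here I must reconcile the metric $d'$ produced by \cite{JY} with the metric appearing in condition (3); since any two compatible metrics on the compact space $X$ are uniformly equivalent, the relation ``$d'$-close along an orbit segment'' is comparable to the corresponding relation for the metric of condition (3), and the Bowen estimate transfers after possibly shrinking $\delta$ and enlarging $C$.

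With these verifications in place, Walters's theorem applies to $\Trip{X}{T}{\varphi}$ read in the metric $d'$ and yields a strictly positive $h \in \Cont{X,\R}$, a Borel probability measure $\mu$, and $\lambda > 0$ with $\func{\Ruelle{X}{T}{\varphi}}{h} = \lambda h$ and $\func{\Br{\Ruelle{X}{T}{\varphi}}^{\ast}}{\mu} = \lambda\mu$, together with the uniqueness of the pair $\Pair{\lambda}{\mu}$ --- the exactness being what supplies the mixing needed to rule out a second eigenmeasure. This is exactly the unique positive eigenvalue condition of Definition~\ref{RPF Ruelle Triple}. I expect the main obstacle to be the metric-conversion step: extracting an explicit one-step expanding metric from positive expansiveness and checking that $T$ remains open and $\varphi$ remains Bowen in that metric is where the topological hypotheses of Definition~\ref{The Walters Criteria and the Jiang-Ye Conditions} must be turned into the quantitative estimates Walters's proof consumes; once that translation is secured, the existence and uniqueness assertions follow from the cited results.
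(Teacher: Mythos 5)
Your proposal is correct in substance and follows essentially the same route as the paper, which offers no argument of its own for this theorem beyond the sentence that it follows from the main results of \cite{W} and \cite{JY}: like you, the authors reduce everything to Walters's Ruelle--Perron--Frobenius machinery, and your fleshing-out of that reduction (the Schauder--Tychonoff fixed point for existence of an eigenmeasure, the transfer of exactness and of Bowen's condition under a change to a uniformly equivalent compatible metric, and the extraction of uniqueness from Walters's convergence statement) is sound.

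One correction of attribution is needed, since you lean on it explicitly: the upgrade from positive expansiveness to a compatible metric in which $T$ expands small distances by a definite factor is \emph{not} the content of \cite{JY} --- it is Reddy's theorem \cite{Reddy}, which the paper lists in its bibliography. The paper itself tells you what \cite{JY} is for in the remark immediately following the theorem: Jiang and Ye treat the complementary, \emph{weakly contractive} (non-expansive) regime, so describing the one-step expansion estimate as ``the contribution of \cite{JY}'' misstates that reference. Relatedly, the version of Walters's theorem that runs on Bowen's condition (condition (3) of Definition \ref{The Walters Criteria and the Jiang-Ye Conditions}) rather than on summable variation is the later paper \cite{W2}, as the paper's own discussion preceding the theorem (``In \cite{W,W2}, P. Walters gave criteria \ldots which was modified by Kumjian and Renault in \cite{KR}'') indicates; citing \cite{W} alone for the Bowen-condition form is slightly off, though the paper's one-line attribution is equally loose on this point. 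With the citations repaired --- Reddy for the metric, \cite{W,W2} (or \cite{KR}) for the RPF theorem under Bowen's condition --- your sketch is a faithful and more detailed account of exactly the argument the paper is invoking.
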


% ---------------------------------------------------------------------------------------------------------------------------------------------
	
In \cite{JY}, Y. Jiang and Y.-L. Ye stated analogous conditions for weakly contractive iterated-function systems for which results similar to Theorem \ref{The Walters Criteria and the Jiang-Ye Criterion Imply the RPF Property} hold.

%%%%%%%%%%%%%%%%%%%%%%%%%%%%%%%%%%%%%%%%%%%%%%%%%%%%%%%%%%%%%%%%%%%%%%%%%%%%%%%%%%%%%%%%%%%%%%%%%%%%%%%%%%%%%%%%%%%%%%%%%%%%%%%%%%%%%%%%%%%%%%%
\section{Continuous $ 1 $-cocycles on semigroups and continuous $ 1 $-cocycles on $ \DRG{X}{\sigma} $} \label{sec:class-cocycles} %%%%%%%%%%%%%
%%%%%%%%%%%%%%%%%%%%%%%%%%%%%%%%%%%%%%%%%%%%%%%%%%%%%%%%%%%%%%%%%%%%%%%%%%%%%%%%%%%%%%%%%%%%%%%%%%%%%%%%%%%%%%%%%%%%%%%%%%%%%%%%%%%%%%%%%%%%%%%

%%%%%%%%%%%%%%%%%%%%%%%%%%%%%%%%%%%%%%%%%%%%%%%%%%%%%%%%%%%%%%%%%%%%%%%%%%%%%%%%%%%%%%%%%%%%%%%%%%%%%%%%%%%%%%%%%%%%%%%%%%%%%%%%%%%%%%%%%%%%%%%
\subsection{Semigroup cocycles}
%%%%%%%%%%%%%%%%%%%%%%%%%%%%%%%%%%%%%%%%%%%%%%%%%%%%%%%%%%%%%%%%%%%%%%%%%%%%%%%%%%%%%%%%%%%%%%%%%%%%%%%%%%%%%%%%%%%%%%%%%%%%%%%%%%%%%%%%%%%%%%%

We now discuss semigroup cocycles with values a semigroup module $ A $, with the aim of explicitly constructing all $ \N^{k} $ $ 1 $-cocycles with values in the $ \N^{k} $-module $ A $. \\

% ---------------------------------------------------------------------------------------------------------------------------------------------

\begin{Def}[Semigroup Cocycles]
Let $ S $ be a semigroup and $ A $ an $ S $-module, so that $ A $ is an abelian group and there exists a homomorphism $ \pi: S \to \End{A} $. When there is no danger of confusion, for $ s \in S $ and $ \alpha \in A $, we denote by $ s \alpha \in A $ the element $ \FUNC{\func{\pi}{s}}{\alpha} $ of $ A $. Define $ \func{Z^{1}}{S,A} $ to be the set of $ A $-valued $ 1 $-cocycles on $ S $, that is, $ \func{Z^{1}}{S,A} $ is the set of functions
$$
\func{Z^{1}}{S,A} \df \Set{\gamma: S \to A}{\func{\gamma}{st} = \func{\gamma}{s} + s \func{\gamma}{t}, ~ \forall s,t \in S}.
$$
A function $ \gamma : S \to A $ is said to be an $ A $-valued $ 1 $-coboundary on $ S $ if there is an $ \alpha \in A $ such that $ \func{\gamma}{s} = \alpha - s \alpha $ for all $ s \in S $, in which case we write $\gamma = \gamma_{\alpha} $. Let $ \func{B^{1}}{S,A} $ denote the collection of all $ A $-valued $ 1 $-coboundaries on $ S $.
\end{Def}

% -------------------------------------------------------------------------------------------------------------------------
	
Routine computations show that $ \func{Z^{1}}{S,A} $ forms a group under addition, that every $ 1 $-coboundary is a $ 1 $-cocycle and that $ \func{B^{1}}{S,A} $ is a subgroup of $ \func{Z^{1}}{S,A} $. We verify that every $ 1 $-coboundary is in fact a $ 1 $-cocycle. Let $ \alpha \in A $ and $ s,t \in S $; then
\begin{align*}
    \func{\gamma_{\alpha}}{s t}
& = \alpha - \Br{s t}  \alpha
  = \alpha - \FUNC{\func{\pi}{st}}{ \alpha}
  = \alpha - \FUNC{\func{\pi}{s}}{ \alpha} + \FUNC{\func{\pi}{s}}{ \alpha} - \FUNC{\func{\pi}{s}}{\FUNC{\func{\pi}{t}}{ \alpha}} \\
& = \alpha - s  \alpha + s  \alpha - \FUNC{\func{\pi}{s}}{\FUNC{\func{\pi}{t}}{ \alpha}}
  = \alpha - s  \alpha + s  \alpha - s \Br{t  \alpha} \\
& = \func{\gamma_{\alpha}}{s} + s \func{\gamma_{ \alpha}}{t}.
\end{align*}
Hence, $ \func{B^{1}}{S,A} \subseteq \func{Z^{1}}{S,A} $. Moreover, we define the first semigroup cohomology of $ S $ with coefficients in $ A $ by $ \func{H^{1}}{S,A} \df \func{Z^{1}}{S,A} / \func{B^{1}}{S,A} $.

For the special case $ S = \N^{k} $, $k\in \N_{>0}$,  the following definition provides an important example of an $ S $-module. \\

% ---------------------------------------------------------------------------------------------------------------------------------------------
	
\begin{Def} \label{ex: special case RD groupoid module}
Let $ \sigma = \Seq{\sigma_{i}}{i \in \SqBr{k}} $ be a $ k $-tuple of commuting surjective local homeomorphism on the locally compact Hausdorff space $ X $. Let $ H $ be a topological locally compact abelian group. Define an $ \N^{k} $-module structure on $ A = \Cont{X,H} $ by setting
$$
\FUNC{\func{\pi_{n}}{f}}{x} \df \func{f}{\func{\sigma^{n}}{x}},
$$
for all $ n \in \N^{k} $, $ f \in \Cont{X,H} $ and $ x \in X $.
\end{Def}

% -------------------------------------------------------------------------------------------------------------------------

The next condition will be crucial in constructing $ 1 $-cocycles on $ \N^{k} $. \\

% -------------------------------------------------------------------------------------------------------------------------

\begin{Def}[Module Cocycle Condition] \label{def: module generators}
Let $ A $ be an $ \N^{k} $-module, and let $ a=\Seq{a_{i}}{i \in \SqBr{k}} $ be a $ k $-tuple of elements of $ A $. We say that $ \Seq{a_{i}}{i \in \SqBr{k}} $ satisfies  the {\it module cocycle condition} if for all $i,j \in [k], ~ i \neq j$: 
\begin{equation} \label{eq:module cocycle condition}
a_{i} +  \mathbf{e}_{i} a_{j} = a_{j} + \mathbf{e}_{j} a_{i},
\end{equation}
where $ \SSet{\mathbf{e}_{i}}_{i \in \SqBr{k}} $ are the canonical generators of $ \N^{k} $. \\
\end{Def}

% ---------------------------------------------------------------------------------------------------------------------------------------------

\begin{Thm} \label{prop:explicit description fo cocycles associated to cocycle condition} Let $ A $ be an $ \N^{k} $-module.
\begin{enumerate}
\item[(1)] Suppose that the  $ k $-tuple $ a = \Seq{a_{i}}{i \in \SqBr{k}} \in A^{k} $ satisfies the module cocycle condition of Equation \eqref{eq:module cocycle condition}. Then
there is a unique cocycle $ c_{a} \in \func{Z^{1}}{\N^{k},A} $ satisfying, for every $\ell \in \SqBr{k}$: 
\begin{equation} \label{eq:requirement general formula for the cocycle}
\func{c_{a}}{\mathbf{e}_{\ell}} = a_{\ell}.
\end{equation} 
 The cocycle $ c_{a} $ is given by the following formula:
\begin{equation} \label{eq:general formula for the cocycle}
  \func{c_{a}}{n} : = \sum_{i = 0}^{n_{1} - 1} \mathbf{e}_{1}^{i} a_{1} + \mathbf{e}_{1}^{n_{1}} \sum_{i = 0}^{n_{2} - 1} \mathbf{e}_{2}^{i} a_{2}\  +\  \cdots\  +\
  \mathbf{e}_{1}^{n_{1}} \mathbf{e}_{2}^{n_{2}} \cdots \mathbf{e}_{k - 1}^{n_{k - 1}} \sum_{i = 0}^{n_{k} - 1} \mathbf{e}_{k}^{i} a_{k}.
\end{equation}

\item[(2)]
The correspondence between the $ k $-tuples $ a = \Seq{a_{i}}{i \in \SqBr{k}} \in A^{k} $ satisfying the module cocycle condition and the associated  cocycles $ c_{a} \in \func{Z^{1}}{\N^{k},A} $ is a bijection.

\item[(3)]
Such a $ 1 $-cocycle $ c_{a} \in \func{Z^{1}}{\N^{k},A} $ corresponds to a coboundary in $ \func{B^{1}}{\N^{k},A} $ if and only there exists $\alpha \in A$ such that  $ a_{i} = \alpha - \mathbf{e}_{i} \alpha $ for $ i \in \SqBr{k} $.
\end{enumerate}
\end{Thm}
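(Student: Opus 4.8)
The plan is to prove (1) first, since (2) and (3) follow from it quickly. I would begin by recording the useful reformulation of the module cocycle condition \eqref{eq:module cocycle condition}: rearranging $a_{i} + \mathbf{e}_{i} a_{j} = a_{j} + \mathbf{e}_{j} a_{i}$ gives the single identity $\mathbf{e}_{i} a_{j} - a_{j} = \mathbf{e}_{j} a_{i} - a_{i}$ for all $i \neq j$. Throughout I would use that $\N^{k}$ is abelian, so the endomorphisms $\func{\pi}{n}$ all commute and satisfy $\func{\pi}{m} \func{\pi}{n} = \func{\pi}{m + n}$; in the module notation this reads $m \Br{n a} = \Br{m + n} a$.

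For uniqueness, I would note that any $\gamma \in \func{Z^{1}}{\N^{k},A}$ satisfies the right-generator recursion $\func{\gamma}{n + \mathbf{e}_{\ell}} = \func{\gamma}{n} + n \func{\gamma}{\mathbf{e}_{\ell}}$, and that in the single-variable case this forces $\func{\gamma}{m \mathbf{e}} = \sum_{i = 0}^{m - 1} \mathbf{e}^{i} \func{\gamma}{\mathbf{e}}$ by telescoping. Decomposing a general $n$ along the staircase $0 \to n_{1} \mathbf{e}_{1} \to n_{1} \mathbf{e}_{1} + n_{2} \mathbf{e}_{2} \to \cdots \to n$ and applying the cocycle identity at each step then forces $\func{\gamma}{n}$ to be exactly \eqref{eq:general formula for the cocycle}. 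Since every element of $\N^{k}$ has finite length, an induction on $\Abs{n}$ shows $\gamma$ is completely determined by the values $\func{\gamma}{\mathbf{e}_{\ell}}$; in particular at most one cocycle satisfies \eqref{eq:requirement general formula for the cocycle}.

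For existence I would take \eqref{eq:general formula for the cocycle} as the definition of $c_{a}$ and verify the cocycle property in two stages. First I would establish the generator relation $\func{c_{a}}{n + \mathbf{e}_{\ell}} = \func{c_{a}}{n} + n a_{\ell}$ for all $n$ and all $\ell$; this is the crux and the only place the module cocycle condition enters. Comparing the staircase expansions of $\func{c_{a}}{n + \mathbf{e}_{\ell}}$ and $\func{c_{a}}{n}$, the summands indexed $j < \ell$ cancel, the $j = \ell$ summand contributes $\mathbf{e}_{1}^{n_{1}} \cdots \mathbf{e}_{\ell}^{n_{\ell}} a_{\ell}$, and each $j > \ell$ summand contributes the prefix difference $\Br{\mathbf{e}_{\ell} - 1}$ applied to $\sum_{i = 0}^{n_{j} - 1} \mathbf{e}_{j}^{i} a_{j}$. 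Here the reformulated condition yields the inner telescoping $\mathbf{e}_{\ell} \sum_{i = 0}^{n_{j} - 1} \mathbf{e}_{j}^{i} a_{j} - \sum_{i = 0}^{n_{j} - 1} \mathbf{e}_{j}^{i} a_{j} = \mathbf{e}_{j}^{n_{j}} a_{\ell} - a_{\ell}$, after which the sum over $j > \ell$ telescopes to $\Br{\mathbf{e}_{\ell + 1}^{n_{\ell + 1}} \cdots \mathbf{e}_{k}^{n_{k}} - 1} a_{\ell}$, and the surviving terms collapse to exactly $\mathbf{e}_{1}^{n_{1}} \cdots \mathbf{e}_{k}^{n_{k}} a_{\ell} = n a_{\ell}$. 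Second, with the generator relation in hand, I would fix $m$ and prove the full identity $\func{c_{a}}{m + n} = \func{c_{a}}{m} + m \func{c_{a}}{n}$ by induction on $\Abs{n}$; the inductive step uses only the generator relation and $\func{\pi}{m} \func{\pi}{n} = \func{\pi}{m + n}$, so the module cocycle condition is not needed again.

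Parts (2) and (3) then follow with little further work. Injectivity of $a \mapsto c_{a}$ is immediate from $\func{c_{a}}{\mathbf{e}_{\ell}} = a_{\ell}$, and surjectivity holds because for any $\gamma \in \func{Z^{1}}{\N^{k},A}$ the tuple $\Seq{\func{\gamma}{\mathbf{e}_{\ell}}}{\ell \in \SqBr{k}}$ satisfies \eqref{eq:module cocycle condition} — this is precisely the equality $\func{\gamma}{\mathbf{e}_{i} + \mathbf{e}_{j}} = \func{\gamma}{\mathbf{e}_{j} + \mathbf{e}_{i}}$ expanded via the cocycle identity — so $\gamma = c_{a}$ by uniqueness. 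For (3), if $a_{i} = \alpha - \mathbf{e}_{i} \alpha$ for all $i$ then the coboundary $\gamma_{\alpha}$ agrees with $c_{a}$ on the generators and hence equals it by uniqueness; conversely, if $c_{a} = \gamma_{\alpha}$ then evaluating at $\mathbf{e}_{i}$ gives $a_{i} = \alpha - \mathbf{e}_{i} \alpha$. The main obstacle is the generator-relation computation, i.e.\ the double telescoping driven by the module cocycle condition; everything else is routine bookkeeping.
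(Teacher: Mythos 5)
Your proposal is correct, and its computational core is identical to the paper's: your inner telescoping
\begin{equation*}
\mathbf{e}_{\ell} \sum_{i = 0}^{n_{j} - 1} \mathbf{e}_{j}^{i} a_{j} \; - \; \sum_{i = 0}^{n_{j} - 1} \mathbf{e}_{j}^{i} a_{j} \; = \; \mathbf{e}_{j}^{n_{j}} a_{\ell} - a_{\ell}
\end{equation*}
is exactly the paper's ``flipping'' formula \eqref{eq:formula for flipping}, and both arguments finish the generator step by the same outer telescoping of the staircase expansion \eqref{eq:general formula for the cocycle}. Where you differ is in the architecture. The paper runs a simultaneous induction on $N$ carrying two statements at once --- the closed formula for $\func{c_{a}}{t}$ with $\Abs{t} \leq N$ and the additivity $\func{c_{a}}{m + n} = \func{c_{a}}{m} + m \func{c_{a}}{n}$ with $\Abs{m + n} \leq N$ --- with the crux being the \emph{left}-generator relation $\func{c_{a}}{\mathbf{e}_{\ell} + n} = a_{\ell} + \mathbf{e}_{\ell} \func{c_{a}}{n}$, obtained by pushing $\mathbf{e}_{\ell}$ through the summands with $j < \ell$, and the case $\Abs{m} > 0$ handled by peeling a generator off $m$ and invoking the hypothesis twice. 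You instead prove the \emph{right}-generator relation $\func{c_{a}}{n + \mathbf{e}_{\ell}} = \func{c_{a}}{n} + n a_{\ell}$ outright, by a direct difference computation (the extra $\mathbf{e}_{\ell}$ landing in the prefixes of the summands with $j > \ell$), with no induction at all; the only induction you need is the routine one on $\Abs{n}$ for full additivity, which uses nothing but the generator relation and $\func{\pi}{m} \func{\pi}{n} = \func{\pi}{m + n}$. This buys a cleaner separation of concerns: the module cocycle condition enters in exactly one displayed identity, and you avoid the redundancy in the paper's induction, which carries the defining formula as part of the inductive statement even though it holds by definition. Your uniqueness argument is also slightly slicker --- deriving the formula directly from the staircase decomposition $0 \to n_{1}\mathbf{e}_{1} \to n_{1}\mathbf{e}_{1} + n_{2}\mathbf{e}_{2} \to \cdots \to n$ rather than the paper's induction on $\Abs{n} $ --- and in part (2) you make explicit the verification (via $\func{\gamma}{\mathbf{e}_{i} + \mathbf{e}_{j}} = \func{\gamma}{\mathbf{e}_{j} + \mathbf{e}_{i}}$) that the paper dismisses as ``easy to check.'' Parts (3) and the remaining bookkeeping agree with the paper's treatment.
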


\begin{proof} Proof of (1). 
We subdivide the proof in two parts. We will first prove that the formula for $ c_{a} $ in Equation  \eqref{eq:general formula for the cocycle} gives a $ 1 $-cocycle on $ \N^{k} $ satisfying the conditions of Equation  \eqref{eq:requirement general formula for the cocycle}. Then we will prove the uniqueness.
	
For the fist part of the proof, we will proceed by induction. For a fixed $N\in \N$  our induction statement is that for any  $t,  m,n \in \N^{k} $, with  $ \Abs{t} \leq N $ and $ \Abs{m + n} \leq N $,  we have:

\begin{align}  \label{eq:new form ind hyp one} 
 \func{c_{a}}{t} &= \sum_{i = 0}^{t_{1} - 1} \mathbf{e}_{1}^{i} a_{1} + \mathbf{e}_{1}^{t_{1}} \sum_{i = 0}^{t_{2} - 1} \mathbf{e}_{2}^{i} a_{2}\  +\  \cdots\  +\
 \mathbf{e}_{1}^{t_{1}} \mathbf{e}_{2}^{t_{2}} \cdots \mathbf{e}_{k - 1}^{t_{k - 1}} \sum_{i = 0}^{t_{k} - 1} \mathbf{e}_{k}^{i} a_{k},\hbox{ and}\\  \label{eq:new form ind hyp two} 
\func{c_{a}}{m + n} &= \func{c_{a}}{m} + m\  \func{c_{a}}{n} .\\ \nonumber
\end{align}

The base case $N=1$ is easily checked as it amounts to, for  all $ \ell \in \SqBr{k} $:

\[ 
\begin{split}
\func{c_{a}}{\mathbf{e}_{\ell}} &=   \mathbf{e}_{1}^{0} \ldots  \mathbf{e}_{\ell -1}^{0} 
\mathbf{e}_{\ell}^{0} a_{\ell} = a_{\ell} ,\hbox{ and}\\
\func{c_{a}}{\mathbf{e}_{\ell}  } &= \func{c_{a}}{\mathbf{e}_{\ell}} + c_{a}(\mathbf{e}_{\ell} ) \, \func{c_{a}}{     0},\quad   \func{c_{a}}{\mathbf{e}_{\ell}  } = \func{c_{a}}{0} +0\,  c_{a}(\mathbf{e}_{\ell} ) . 
\end{split}\]

For the inductive step, we now suppose 
that the cocycle formula in   Equation  \eqref{eq:new form ind hyp one} holds for all $t\in \N^k$, with $|t|\leq N$, and that Equation  \eqref{eq:new form ind hyp two}  hold for all $n,m\in  \N^k$ with  $ |n+m|\leq N$. 
We need to show that  Equation  \eqref{eq:new form ind hyp one} holds for all $c_a(t)$, with $|t|\leq N+1$, and that Equation  \eqref{eq:new form ind hyp two}  hold for all $n,m\in  \N^k$ with  $ |n+m|\leq N+1$. 
To do so, fix $m, n \in \N$ with $\Abs{ m +n } = N $, and  choose any   $ \ell \in \SqBr{k} $ so that 
$ (m +n +\mathbf{e}_{\ell}) \in \N^k$, which implies  $\Abs{ m +n +\mathbf{e}_{\ell}}  = N + 1 $.

Assume first $m =0.$ Then since $|n| = N$,  the induction hypothesis (particularly Equation  \eqref{eq:new form ind hyp one}) implies:
\begin{equation} \label{eq:explicit cocycle equation}
  \func{c_{a}}{\mathbf{e}_{\ell}}\  +\  \mathbf{e}_{\ell}\, \func{c_{a}}{n}
= \func{c_{a}}{\mathbf{e}_{\ell}}\  +\ 
  \mathbf{e}_{\ell}
  \Br{
     \sum_{i = 0}^{n_{1} - 1} \mathbf{e}_{1}^{i} a_{1}\  +\  \cdots 
      \  +\ 
     \mathbf{e}_{1}^{n_{1}} \mathbf{e}_{2}^{n_{2}} \cdots \mathbf{e}_{k - 1}^{n_{k - 1}} \sum_{i = 0}^{n_{k} - 1} \mathbf{e}_{k}^{i} a_{k}
     }.
\end{equation}
Now note that for $ j < \ell $, the module cocycle condition of Equation \eqref{eq:module cocycle condition} implies that:
\begin{equation} \label{eq:formula for flipping}
	  \mathbf{e}_{\ell} \sum_{i = 0}^{n_{j} - 1} \mathbf{e}_{j}^{i} a_{j} = \sum_{i = 0}^{n_{j} - 1} \mathbf{e}_{j}^{i} \Br{\mathbf{e}_{\ell} a_{j}}
  = \sum_{i = 0}^{n_{j} - 1} \mathbf{e}_{j}^{i} \Br{a_{j} + \mathbf{e}_{j}  a_{\ell} - a_{\ell}}  =	\sum_{i = 0}^{n_{j} - 1} \mathbf{e}_{j}^{i} a_{j} + \mathbf{e}_{j}^{n_{j}} a_{\ell} - a_{\ell}.
\end{equation}
Next, we will we use Equation \eqref{eq:formula for flipping} to replace the terms $ \leq \ell $ in Equation \eqref{eq:explicit cocycle equation} with equivalent expressions, to get
\begin{equation} \label{eq:explicit cocycle equation after flipping}
\begin{split}
      \func{c_{a}}{\mathbf{e}_{\ell}} &+ \mathbf{e}_{\ell} \func{c_{a}}{n}
=  ~ a_{\ell} + \Br{\sum_{i = 0}^{n_{1} - 1} \mathbf{e}_{1}^{i} a_{1} + \mathbf{e}_{1}^{n_{1}} a_{\ell} - a_{\ell}} +
	    \mathbf{e}_{1}^{n_{1}} \Br{\sum_{i = 0}^{n_{2} - 1} \mathbf{e}_{2}^{i} a_{2} + \mathbf{e}_{j}^{n_{j}} a_{\ell} - a_{\ell}} + \cdots \\
  &\cdots + \mathbf{e}_{1}^{n_{1} - 1} \mathbf{e}_{2}^{n_{2}} \cdots \mathbf{e}_{\ell - 1}^{n_{\ell - 1}} \Br{\sum_{i=0}^{n_\ell } {\bf e}_\ell^i(a_\ell) -  a_\ell} + \cdots 
   + \mathbf{e}_{\ell} \mathbf{e}_{1}^{n_{1} - 1} \mathbf{e}_{2}^{n_{2}} \cdots \mathbf{e}_{k - 1}^{n_{k - 1}} \sum_{i = 0}^{n_{k} - 1} \mathbf{e}_{k}^{i} a_{k}.
\end{split}
\end{equation}
By using the telescopic properties of Equation \eqref{eq:explicit cocycle equation after flipping} above, one easily sees that $ \func{c_{a}}{\mathbf{e}_{\ell}} + \mathbf{e}_{\ell} \func{c_{a}}{n} $ is equal to
$$
\sum_{i = 0}^{n_{1} - 1} \mathbf{e}_{1}^{i} a_{1} + \mathbf{e}_{1}^{n_{1}} \sum_{i = 0}^{n_{2} - 1} \mathbf{e}_{2}^{i} a_{2} + \cdots +
\mathbf{e}_{1}^{n_{1}} \mathbf{e}_{2}^{n_{2}} \cdots \mathbf{e}_{\ell - 1}^{n_{\ell - 1}} \sum_{i = 0}^{n_{\ell}} \mathbf{e}_{\ell}^{i} a_{\ell} + \cdots +
\mathbf{e}_{1}^{n_{1}} \mathbf{e}_{2}^{n_{2}} \cdots \mathbf{e}_{k - 1}^{n_{k - 1}} \sum_{i = 0}^{n_{k} - 1} \mathbf{e}_{k}^{i} a_{k},
$$
which equals $ \func{c_{a}}{\mathbf{e}_{\ell} + n}$. That is we have proven that, for $n \in \N,$
$\Abs{n} = N $, and for any $ \ell \in \SqBr{k} $:
\begin{equation}\label{eq:final formula for m=0}  \func{c_{a}}{\mathbf{e}_{\ell}}\  +\  \mathbf{e}_{\ell}\, \func{c_{a}}{n}= \func{c_{a}}{\mathbf{e}_{\ell} + n}.\end{equation}

Moreover, by using Equation  \eqref{eq:new form ind hyp one} to replace $c_a(n)$ with the RHS of that equation in the above expression, a straightforward calculation shows that  Equation  \eqref{eq:new form ind hyp one}  holds for $t=n+   \mathbf{e}_{\ell}.$

We now suppose that $ m,n \in \N^{k} $ with $ \Abs{m + n} = N + 1 $ and $ \Abs{m} > 0, $ with positive $\ell-$th coordinate $m_\ell$, for some $ \ell \in \SqBr{k} $. Define  $ m' : =\ m- \mathbf{e}_{\ell}  \in \N^{k} $, and note that $ \Abs{\mathbf{e}_{\ell} + m' + n} = \Abs{m + n} - 1 = \Br{N + 1} - 1 = N $. Therefore by Equation  \eqref{eq:final formula for m=0}  we get:
$$
  \func{c_{a}}{m + n}
= \func{c_{a}}{\mathbf{e}_{\ell} + m' + n}
= \func{c_{a}}{\mathbf{e}_{\ell}} + \mathbf{e}_{\ell}\, \func{c_{a}}{m' + n}.
$$
By using the induction hypothesis twice we now get
\[
\begin{split}
    \func{c_{a}}{m + n}
& = \func{c_{a}}{\mathbf{e}_{\ell}} + \mathbf{e}_{\ell} \Br{\func{f}{m'} + m' \func{c_{a}}{n}}
  = \func{c_{a}}{\mathbf{e}_{\ell}} + \mathbf{e}_{\ell} \func{c_{a}}{m'} + \Br{\mathbf{e}_{\ell} + m} \func{c_{a}}{n} \\
& = \func{c_{a}}{\mathbf{e}_{\ell} + m'} + \Br{\mathbf{e}_{\ell} + m} \func{c_{a}}{n}
  = \func{c_{a}}{m} + m\, \func{c_{a}}{n}.
\end{split}
\]
Moreover, by using Equation  \eqref{eq:new form ind hyp one} to replace in the above expression $c_a(m)$ and $c_a(n)$  with the RHS of that equation, a straightforward calculation shows that   Equation  \eqref{eq:new form ind hyp one}  holds for $t=m+n.$

This completes the induction step, and so we have proven  that the formula for $ c_{a} $ in Equation  \eqref{eq:general formula for the cocycle} gives a $ 1 $-cocycle on $ \N^{k} $ satisfying the conditions of Equation  \eqref{eq:requirement general formula for the cocycle}.

We now prove the uniqueness. Let $ c_{a} $ be as described above, and let $ f $ be any other $ 1 $-cocycle on $ \N^{k} $ such that $ \func{f}{\mathbf{e}_{i}} = a_{i} $ for $ i \in \SqBr{k} $.  Now we proceed by induction on the length $ \Abs{n} $ of $n$. The base case that $ \func{c_{a}}{n} = \func{f}{n} $ for all $ n \in \N^{k} $ with $ \Abs{n} \leq 1 $ follows from the definition of $ c_{a} $ and $ f $.

For the inductive step, assume that $ \func{c_{a}}{r} = \func{f}{r} $ for all $ r \in \N^{k} $ with $ \Abs{r} \leq N $, and suppose $ \Abs{n} = n_{1} + n_{2} + \cdots + n_{k} = N $, $ \Abs{m} = m_{1} + m_{2} + \cdots + m_{k} = N + 1 $,  which implies $ m = n + \mathbf{e}_{\ell} $ for some $ \ell $. By using the inductive hypothesis and the module cocycle condition of Equation \eqref{eq:module cocycle condition}, we then get
$$
  \func{f}{m}
= \func{f}{\mathbf{e}_{\ell} + n}
= \func{c_{a}}{\mathbf{e}_{\ell}} + \mathbf{e}_{\ell} \func{c_{a}}{n}
= \func{c_{a}}{m}.
$$

Proof of (2).  We have already proven in (1) that the correspondence  $a \to c_a$ is injective.   To prove that $a \to c_a$ is a surjection onto $ \func{Z^{1}}{\N^{k},A} $, let us take $ c \in \func{Z^{1}}{\N^{k},A} $ and set $ a_{i} \df \func{c}{\mathbf{e}_{i}} $. It is then easy to check that the $ k $-tuple $ \Seq{a_{i}}{i \in \SqBr{k}} $ satisfies the module cocycle condition.

Proof of (2).  Suppose that the $ k $-tuple $a= \Seq{a_{i}}{i \in \SqBr{k}} \in A^{k} $ gives rise to a coboundary $ c_a $. Then by definition there exists $\alpha \in A$ such that for all $ i \in \SqBr{k}$: 
$$
a_{i}= \func{c_{a}}{\mathbf{e}_{i}}
= \alpha - \mathbf{e}_{i} \alpha .
$$
The other direction is clear.
\end{proof}

\begin{Rmk}\label{rmk:application of Theorem} In the most important of our uses of the above result, Theorem  \ref{prop:explicit description fo cocycles associated to cocycle condition}  applies to the $\N^k-$module  $A=C(X, \R)$ as in Definition  \ref{ex: special case RD groupoid module}.
\end{Rmk}
%%%%%%%%%%%%%%%%%%%%%%%%%%%%%%%%%%%%%%%%%%%%%%%%%%%%%%%%%%%%%%%%%%%%%%%%%%%%%%%%%%%%%%%%%%%%%%%%%%%%%%%%%%%%%%%%%%%%%%%%%%%%%%%%%%%%%%%%%%%%%%%
\subsection{Continuous $ 1 $-cocycles on $ \DRG{X}{\sigma} $}
%%%%%%%%%%%%%%%%%%%%%%%%%%%%%%%%%%%%%%%%%%%%%%%%%%%%%%%%%%%%%%%%%%%%%%%%%%%%%%%%%%%%%%%%%%%%%%%%%%%%%%%%%%%%%%%%%%%%%%%%%%%%%%%%%%%%%%%%%%%%%%%

In this subsection, our objective is to give an algebraic way of constructing all continuous $ H $-valued $ 1 $-cocycles, where $ H $ is a locally compact abelian group, on groupoids associated to a finite family of commuting local homeomorphisms of a compact metric space. In later sections we will mainly be interested in the case $ H = \R $. We begin by recalling the definition of these groupoids. \\

% ---------------------------------------------------------------------------------------------------------------------------------------------

\begin{Def}[\cite{D,ER}] \label{Deaconu-Renault Groupoid} 
Let $ \sigma = \Seq{\sigma_{i}}{i \in \SqBr{k}} $ be a $ k $-tuple of commuting surjective local homeomorphism on the  compact Hausdorff space $ X $. We regard $ \sigma $ as an action of $\mathbb{N}^{k} $ on $ X $ by the formula $ \sigma^{n} = \sigma_{1}^{n_{1}} \ldots \sigma_{k}^{n_{k}} $, where $ n = \Seq{n_{i}}{i \in \SqBr{k}} \in N^{k} $. The transformation groupoid (also called the the semi-direct product groupoid of the action) $ \DRG{X}{\sigma} $ is defined by
$$
\DRG{X}{\sigma} \df \Set{\Trip{x}{p - q}{y} \in X \times \Z^{k} \times X}{p,q \in \N^{k} ~ \text{and} ~ \func{\sigma^{p}}{x} = \func{\sigma^{q}}{y}}.
$$
We identify $ X $ with the unit space of $ \DRG{X}{\sigma} $ via the map $ x \mapsto \Trip{x}{0}{x} $. The structure maps are given by $ \func{r}{x,n,y} = x $, $ \func{s}{x,n,y} = y $, $ \Trip{x}{m}{y}^{- 1} = \Trip{y}{- m}{x} $, and $ \Trip{x}{m}{y} \Trip{y}{n}{z} = \Trip{x}{m + n}{z} $. A basis for the topology on $ \DRG{X}{\sigma} $ is given by subsets of the form
$$
\func{\mathcal{Z}}{U,V,m,n} \df U \times \SSet{p - q} \times V,
$$
where $ U,V $ are open in $ X $ and $ \Im{\sigma^{p}}{U} = \Im{\sigma^{q}}{V} $. We will denote by $ \DRG{X}{\sigma}^{\Br{2}} $ the set of composable pairs of $ \DRG{X}{\sigma} $.

The number $ k $ is called the \emph{rank} of $ \DRG{X}{\sigma} $.
\end{Def}
It is well known that $ \DRG{X}{\sigma} $ is an \'{e}tale locally compact Hausdorff amenable groupoid (cf. \cite{D,RWA}). \\

% ---------------------------------------------------------------------------------------------------------------------------------------------

\begin{Def}[Continuous Groupoid 1-Cocycles] \label{Groupoid 1-Cocycle}
Let $ \G $ be a topological  groupoid and $ H $ be a topological locally compact abelian  group.
A continuous  $ H $-valued \emph{$ 1 $-cocycle} on $ \G $ is a continuous function $ c: \G \to H $ such that for any $ \Pair{\gamma}{\gamma'} $ in  $ \G^{\Br{2}} $ we have
$$
\func{c}{\gamma \gamma'} = \func{c}{\gamma} + \func{c}{\gamma'}.
$$
In other words, $ c $ is just a continuous groupoid homomorphism from $ \G $ to $ H $.
We will denote by $ \ZZ{\G}{H} $ the set of continuous $ H $-valued $ 1 $-cocycles on $ \G $.
\end{Def}

% ---------------------------------------------------------------------------------------------------------------------------------------------

It is well known that $ \ZZ{\G}{H} $ is a group under pointwise addition and that $ \BB{\G}{H} $, the collection of continuous functions $ c: \G \to H $ such that there is a continuous function $ f : \G^{\Br{0}} \to H $ such that for all $ \gamma \in \G $, $ \func{c}{\gamma} = \func{f}{\func{r}{\gamma}} - \func{f}{\func{s}{\gamma}} $, is a subgroup of $ \ZZ{\G}{H} $. We define the first continuous cocycle groupoid cohomology of $ \G $ by
$$
\HH{\G}{H} := \ZZ{\G}{H} / \BB{\G}{H}.
$$
Our goal is to give an algebraic characterization of the cocycles in $ \ZZ{\DRG{X}{\sigma}}{H} $ for $ \DRG{X}{\sigma} $ expressed in  terms of their coordinate defining  functions as given in  (2) below. To do so, we will introduce the following definition. A special case of the module cocycle condition of Equation \eqref{eq:module cocycle condition} is \\

% ---------------------------------------------------------------------------------------------------------------------------------------------

\begin{Def}[The Cocycle Condition] \label{The Cocycle Condition}
Let $ H $ be a topological locally compact abelian group. Fix $ k \in \N $ and let $ \Trip{X}{\sigma}{\varphi} $ be an ordered triple with:
\begin{enumerate}
\item[(1)]
$ X $ a compact  metric space.

\item[(2)]
$ \sigma = \Seq{\sigma_{i}}{i \in \SqBr{k}} $ a  $ k $-tuple of commuting  surjective local homeomorphisms of $ X $.

\item[(3)]
$ \varphi = \Seq{\varphi_{i}}{i \in \SqBr{k}} $ a $ k $-tuple of elements from $ \Cont{X,H} $.
\end{enumerate}
Then $ \Trip{X}{\sigma}{\varphi} $ is said to satisfy the \emph{cocycle condition of order $ k $} if, for all $i,j \in \SqBr{k}$: 
\begin{equation}\label{eq:the cocycle condition}
\varphi_{i} + \varphi_{j} \circ \sigma_{i} = \varphi_{j} + \varphi_{i} \circ \sigma_{j}.
\end{equation}
Note that Equation \eqref{eq:the cocycle condition} is a special case  of the module cocycle condition of Definition \ref{def: module generators}.
When the order $ k $ is understood, we will omit it and just say that $ \Trip{X}{\sigma}{\varphi} $ satisfies the cocycle condition. Moreover, with a slight abuse of notation, when $X$ and $\sigma$ are understood, we will also say that $\varphi$ satisfies the cocycle condition.
\end{Def}

\begin{Eg}\label{ex:trivial cocycles}
	With notation as in Definition \ref{The Cocycle Condition}, if $ \varphi_{i} $ is constant for each $ i $, then $ \Trip{X}{\sigma}{\varphi} $ satisfies the cocycle condition.
\end{Eg}

The cocycle condition will be the characterizing feature for Ruelle triples in the case of a finite family of commuting endomorphisms, see Definition \ref{Ruelle Dynamical System}.

We now show that every groupoid cocycle $ c \in \ZZ{\DRG{X}{\sigma}}{H} $ arises from a $ k $-tuple of functions $ \Seq{\varphi_{i}}{i \in \SqBr{k}} $ satisfying the cocycle condition as above, and conversely. \\

% ---------------------------------------------------------------------------------------------------------------------------------------------

\begin{Prop}[Cocycle Characterization] \label{cor:The Classification Theorem for Continuous Real-Valued 1-Cocycles on Deaconu-Renault Groupoids}
Let $ \Trip{X}{\sigma}{\varphi} $ be a triple that satisfies conditions (1), (2) and (3) of Definition \ref{The Cocycle Condition}. Then statements (1) and (2) below are equivalent.
\begin{enumerate}
\item[(1)]
Algebraic characterization: $ \Trip{X}{\sigma}{\varphi} $ satisfies the cocycle condition.

\item[(2)]
There exists a unique $ c_{X,\sigma,\varphi} \in \ZZ{\DRG{X}{\sigma}}{H} $ such that $ \func{c}{x,\mathbf{e}_{i},\func{\sigma_{i}}{x}} = \func{\varphi_{i}}{x} $ for all $ i \in \SqBr{k} $ and $ x \in X $.

\end{enumerate}
Moreover,  every $ c \in \ZZ{\DRG{X}{\sigma}}{H} $ arises as a $ c_{X,\sigma,\varphi}$, for some $k-$tuple $\varphi$ satisfying the cocycle condition. In addition, $c \in \ZZ{\DRG{X}{\sigma}}{H} $ is a $ 1 $-coboundary if and only if there exists $ \psi \in \Cont{X,H}$ such that $ \func{c}{x,\mathbf{e}_{i},\func{\sigma_{i}}{x}} = \func{\psi}{x} - \func{\psi}{\func{\sigma_{i}}{x}} $, for all $i,j \in \SqBr{k}$.
\end{Prop}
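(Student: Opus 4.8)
The plan is to reduce the whole statement to the semigroup cohomology computation of Theorem \ref{prop:explicit description fo cocycles associated to cocycle condition}, applied to the $\N^k$-module $A = \Cont{X,H}$ of Definition \ref{ex: special case RD groupoid module}, whose action is $\FUNC{\func{\pi_n}{f}}{x} = \func{f}{\func{\sigma^n}{x}}$. The first observation is that, under this module structure, the cocycle condition of Equation \eqref{eq:the cocycle condition} is literally the module cocycle condition of Equation \eqref{eq:module cocycle condition} for the tuple $\varphi = \Seq{\varphi_i}{i \in \SqBr{k}}$, as already noted after Definition \ref{The Cocycle Condition}. Hence $\varphi$ satisfies the cocycle condition if and only if Theorem \ref{prop:explicit description fo cocycles associated to cocycle condition} produces a unique semigroup cocycle $c_\varphi \in \func{Z^1}{\N^k,\Cont{X,H}}$ with $\func{c_\varphi}{\mathbf{e}_i} = \varphi_i$. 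The goal is then to promote $c_\varphi$ to a continuous groupoid cocycle and to show this assignment is an isomorphism $\func{Z^1}{\N^k,\Cont{X,H}} \to \ZZ{\DRG{X}{\sigma}}{H}$ carrying $\func{B^1}{\N^k,\Cont{X,H}}$ onto $\BB{\DRG{X}{\sigma}}{H}$; all four assertions then drop out.

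For the implication (1) $\Rightarrow$ (2), I would define the candidate by
\[
\func{c_{X,\sigma,\varphi}}{x,p-q,y} := \FUNC{\func{c_\varphi}{p}}{x} - \FUNC{\func{c_\varphi}{q}}{y},
\]
for $\Trip{x}{p-q}{y} \in \DRG{X}{\sigma}$ with $p,q \in \N^k$ and $\func{\sigma^p}{x} = \func{\sigma^q}{y}$. The main task is well-definedness: if $p - q = p' - q'$ with $\func{\sigma^{p'}}{x} = \func{\sigma^{q'}}{y}$, then $p + q' = p' + q$, and expanding $\func{c_\varphi}{p+q'}$ and $\func{c_\varphi}{p'+q}$ via the semigroup cocycle identity, together with the two relations $\func{\sigma^p}{x} = \func{\sigma^q}{y}$ and $\func{\sigma^{p'}}{x} = \func{\sigma^{q'}}{y}$, forces the two values to agree. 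Once this is settled, continuity is immediate: on a basic open set $\func{\mathcal{Z}}{U,V,p,q}$ the middle coordinate is constant, and $c_{X,\sigma,\varphi}$ is the fixed continuous function $\Pair{x}{y} \mapsto \FUNC{\func{c_\varphi}{p}}{x} - \FUNC{\func{c_\varphi}{q}}{y}$. The groupoid cocycle identity for a composable pair $\Trip{x}{p-q}{y}\Trip{y}{s-t}{z} = \Trip{x}{(p+s)-(q+t)}{z}$ reduces, after the same expansion of $\func{c_\varphi}{\cdot}$, to the abelian identity $q + s = s + q$ applied inside $c_\varphi$; and evaluating at $p = \mathbf{e}_i$, $q = 0$ gives $\func{c_{X,\sigma,\varphi}}{x,\mathbf{e}_i,\func{\sigma_i}{x}} = \func{\varphi_i}{x}$.

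Uniqueness, the implication (2) $\Rightarrow$ (1), and the surjectivity clause all follow from one structural fact: every $\gamma = \Trip{x}{p-q}{y}$ factors as $\Trip{x}{p}{\func{\sigma^p}{x}}\,\Trip{y}{q}{\func{\sigma^q}{y}}^{-1}$, and each positive-degree factor $\Trip{z}{n}{\func{\sigma^n}{z}}$ factors further into elementary arrows $\Trip{w}{\mathbf{e}_i}{\func{\sigma_i}{w}}$ by writing $n$ as a sum of the $\mathbf{e}_i$. Thus any $c \in \ZZ{\DRG{X}{\sigma}}{H}$ is determined by the functions $\func{\varphi_i}{x} := \func{c}{x,\mathbf{e}_i,\func{\sigma_i}{x}}$, giving uniqueness. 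For (2) $\Rightarrow$ (1), composing $\Trip{x}{\mathbf{e}_i}{\func{\sigma_i}{x}}$ with $\Trip{\func{\sigma_i}{x}}{\mathbf{e}_j}{\func{\sigma_j\sigma_i}{x}}$ computes $\func{c}{x,\mathbf{e}_i+\mathbf{e}_j,\func{\sigma_j\sigma_i}{x}} = \func{\varphi_i}{x} + \func{\varphi_j}{\func{\sigma_i}{x}}$, while reversing the roles of $i,j$ gives $\func{\varphi_j}{x} + \func{\varphi_i}{\func{\sigma_j}{x}}$; since $\sigma_i\sigma_j = \sigma_j\sigma_i$ these are the same groupoid element, so the values coincide, which is exactly the cocycle condition. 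Surjectivity is then immediate: these $\varphi_i$ satisfy the cocycle condition, and $c_{X,\sigma,\varphi}$ agrees with $c$ on the elementary arrows, hence $c = c_{X,\sigma,\varphi}$ by uniqueness.

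Finally, for the coboundary characterization, if $c = c_{X,\sigma,\varphi}$ lies in $\BB{\DRG{X}{\sigma}}{H}$ then $\func{c}{\gamma} = \func{\psi}{\func{r}{\gamma}} - \func{\psi}{\func{s}{\gamma}}$ for some $\psi \in \Cont{X,H}$, and evaluating on $\Trip{x}{\mathbf{e}_i}{\func{\sigma_i}{x}}$ yields $\func{\varphi_i}{x} = \func{\psi}{x} - \func{\psi}{\func{\sigma_i}{x}}$. Conversely, given such a $\psi$, the function $\gamma \mapsto \func{\psi}{\func{r}{\gamma}} - \func{\psi}{\func{s}{\gamma}}$ is in $\BB{\DRG{X}{\sigma}}{H}$ and agrees with $c$ on the elementary arrows, hence equals $c$ by uniqueness; equivalently this is part (3) of Theorem \ref{prop:explicit description fo cocycles associated to cocycle condition} transported through the isomorphism, since in $\Cont{X,H}$ the condition $a_i = \alpha - \mathbf{e}_i \alpha$ reads $\varphi_i = \psi - \psi\circ\sigma_i$ with $\psi = \alpha$. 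I expect the well-definedness of the defining formula to be the main obstacle, since it is the one step genuinely using the $\Z^k$-valued degree together with both defining relations of the groupoid; once it is in place, the cocycle identity, continuity, and the two additional clauses are either the same computation or formal consequences of the factorization into elementary arrows.
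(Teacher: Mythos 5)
Your proposal is correct and takes essentially the same route as the paper: both pass through Theorem \ref{prop:explicit description fo cocycles associated to cocycle condition} applied to $ A = \Cont{X,H} $, define the groupoid cocycle by $ \func{c_{X,\sigma,\varphi}}{x,p-q,y} = \FUNC{\func{c_{\varphi}}{p}}{x} - \FUNC{\func{c_{\varphi}}{q}}{y} $, prove $ \Br{2} \Rightarrow \Br{1} $ by composing the two elementary arrows through $ \func{\sigma_{i}}{\func{\sigma_{j}}{x}} = \func{\sigma_{j}}{\func{\sigma_{i}}{x}} $, and settle uniqueness, surjectivity, and the coboundary clause by restriction to the elementary arrows $ \Trip{x}{\mathbf{e}_{i}}{\func{\sigma_{i}}{x}} $. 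The only (cosmetic) divergence is in the well-definedness step, where you cancel via $ p + q' = p' + q $ together with the two-sided expansion of $ \func{c_{\varphi}}{q + q'} $ at $ y $, whereas the paper reaches a common representative via the joins $ m \vee p $, $ n \vee q $ and auxiliary increments $ r,t $ --- the same cancellation mechanism with slightly different bookkeeping.
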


\begin{proof}
$ \Br{1} \Longrightarrow \Br{2} $: Assume (1). Since $ \varphi=\Seq{\varphi}{i \in \SqBr{k}} $ satisfies the cocycle condition, by Proposition \ref{prop:explicit description fo cocycles associated to cocycle condition}, there is a unique $ 1 $-cocycle $ c_{\varphi} $ on $ \N^{k} $, taking values in the $ \N^{k} $-module $ \Cont{X,H} $, that satisfies $ \func{c}{\mathbf{e}_{i}} = \varphi_{i} $ for $ i \in \SqBr{k} $. Define $ c_{X,\sigma,\varphi} \in \ZZ{\DRG{X}{\sigma}}{H} $,  for all $m,n \in \N^{k}$, by: 
$$
\func{c_{X,\sigma,\varphi}}{x,m - n,y} \df \FUNC{\func{c_{\varphi}}{m}}{x} - \FUNC{\func{c_{\varphi}}{n}}{y}.
$$
We must show $ c_{X,\sigma,\varphi} $ is well defined, i.e., if there exist $ p,q \in \N^{k} $ with $ \Trip{x}{m - n}{y} = \Trip{x}{p - q}{y} \in \DRG{X}{\sigma} $, so that $\func{\sigma^{m}}{x} = \func{\sigma^{n}}{y} $ and $ \func{\sigma^{p}}{x} = \func{\sigma^{q}}{y} $, then
$$
\func{c_{X,\sigma,\varphi}}{x,m - n,y} = \func{c_{X,\sigma,\varphi}}{x,p - q,y}.
$$

For, note that since  $ m - n = p - q $, we have, for all $i \in \SqBr{k}$: 
$$
\func{m}{i} - \func{n}{i} = \func{p}{i} - \func{q}{i}, \quad
\func{m}{i} - \func{p}{i} = \func{n}{i} - \func{q}{i}.
$$
Define $ r \in \N^{k} $ by:
$$
r = \Br{\func{r}{1},\func{r}{2},\ldots,\func{r}{k}}, \quad \text{where} ~
\func{r}{i} =
\begin{cases}
\func{p}{i} - \func{m}{i} = \func{q}{i} - \func{n}{i}, & \text{if} ~ \func{p}{i} - \func{m}{i} > 0, \\
0,                                                     & \text{if} ~ \func{p}{i} - \func{m}{i} \leq 0.
\end{cases}
$$
It then follows:  $ m + r = m \vee p $ and $ n + r = q \vee p $, where  for $\alpha ,\beta \in \N^k$, we shall denote by $ \alpha \vee \beta $ the element of $ \N^{k} $ obtained by taking the max of the corresponding coordinates in $ \alpha $ and $ \beta $. Similarly to $r$ one can also define  $ t \in \N^{k} $ such that $ p + t = m \vee p $ and  $ q + t = n \vee q $. Hence  $ m + r = p + t $ and $ n + r = q + t $.

We now use the cocycle identity for $ c_{\varphi} $ to get, for all  $ \Trip{x}{m - n}{y} = \Trip{x}{p - q}{y} \in \DRG{X}{\sigma} $ :
\begin{align*}
    \FUNC{\func{c_{\varphi}}{m + r}}{x} - \FUNC{\func{c_{\varphi}}{n + r}}{y}
& = \FUNC{\func{c_{\varphi}}{m}}{x} + \FUNC{\func{c_{\varphi}}{r}}{\func{\sigma^{m}}{x}} - \Br{\FUNC{\func{c_{\varphi}}{n}}{y} + \FUNC{\func{c_{\varphi}}{r}}{\func{\sigma^{n}}{y}}} \\
& = \FUNC{\func{c_{\varphi}}{m}}{x} - \FUNC{\func{c_{\varphi}}{n}}{y} + \FUNC{\func{c_{\varphi}}{r}}{\func{\sigma^{m}}{x}} - \FUNC{\func{c_{\varphi}}{r}}{\func{\sigma^{n}}{y}},
\end{align*}
and because $ \func{\sigma^{m}}{x} = \func{\sigma^{n}}{y} $, the last two terms cancel each other out.

In the same way we show $ \FUNC{\func{c_{\varphi}}{p + t}}{x} - \FUNC{\func{c_{\varphi}}{q + t}}{y} = \FUNC{\func{c_{\varphi}}{p}}{x} - \FUNC{\func{c_{\varphi}}{q}}{y} $.

Therefore, $ \func{c_{X,\sigma,\varphi}}{x,m - n,y} = \func{c_{X,\sigma,\varphi}}{x,p - q,y} $, so $ c_{X,\sigma,\varphi} $ is well-defined. Moreover, using the fact that $ c_{\varphi} $ is a cocycle, it easily follows that $ c_{X,\sigma,\varphi} $ is a cocycle. The fact that $ c_{X,\sigma,\varphi} $ is continuous follows from the fact that $ c_{\varphi} $ takes on values in $ \Cont{X,H} $.

A straightforward calculation shows that $ c_{X,\sigma,\varphi} $ is unique. Indeed, any  $ c \in \ZZ{\DRG{X}{\sigma}}{H} $ is completely determined by  
$
\func{\varphi_{i}}{x}: = \func{c}{x,\mathbf{e}_{i},\func{\sigma^{\mathbf{e}_{i}}}{x}},\ i \in \SqBr{k},
$ and    $c=c_{X, \sigma,\varphi}$, where 
$\varphi =( \varphi_i) $
satisfies the cocycle condition. 

$ \Br{2} \Longrightarrow \Br{1} $: Assume (2). Then we have, for all $ i,j \in \SqBr{k} $ and $ x \in X $, that:
\begin{align*}
\Trip{x}{\mathbf{e}_{i}}{\func{\sigma_{i}}{x}} \Trip{\func{\sigma_{i}}{x}}{\mathbf{e}_{j}}{\func{\sigma_{j}}{\func{\sigma_{i}}{x}}}
& = \Trip{x}{\mathbf{e}_{i} + \mathbf{e}_{j}}{\func{\sigma_{j}}{\func{\sigma_{i}}{x}}} \\
& = \Trip{x}{\mathbf{e}_{j} + \mathbf{e}_{i}}{\func{\sigma_{i}}{\func{\sigma_{j}}{x}}} \\
& = \Trip{x}{\mathbf{e}_{j}}{\func{\sigma_{j}}{x}} \Trip{\func{\sigma_{j}}{x}}{\mathbf{e}_{i}}{\func{\sigma_{i}}{\func{\sigma_{j}}{x}}},
\end{align*}
and consequently, for all $ x \in X $,
\begin{align*}
    \func{\varphi_{i}}{x} + \func{\varphi_{j}}{\func{\sigma_{i}}{x}}
& = \func{c_{X,\sigma,\varphi}}{x,\mathbf{e}_{i},\func{\sigma_{i}}{x}} +
    \func{c_{X,\sigma,\varphi}}{\func{\sigma_{i}}{x},\mathbf{e}_{j},\func{\sigma_{j}}{\func{\sigma_{i}}{x}}} \\
& = \func{c_{X,\sigma,\varphi}}{\Trip{x}{\mathbf{e}_{i}}{\func{\sigma_{i}}{x}} \Trip{\func{\sigma_{i}}{x}}{\mathbf{e}_{j}}{\func{\sigma_{j}}{\func{\sigma_{i}}{x}}}} \\
& = \func{c_{X,\sigma,\varphi}}{x,\mathbf{e}_{j},\func{\sigma_{j}}{x}} +
    \func{c_{X,\sigma,\varphi}}{\func{\sigma_{j}}{x},\mathbf{e}_{i},\func{\sigma_{i}}{\func{\sigma_{j}}{x}}} \\
& = \func{\varphi_{j}}{x} + \func{\varphi_{i}}{\func{\sigma_{j}}{x}},
\end{align*}
which yields (1).

The statement about coboundaries is easily checked.
\end{proof}

% ---------------------------------------------------------------------------------------------------------------------------------------------

In the particular setting of a groupoid $ \DRG{X}{\sigma} $, with $ A = \Cont{X,H} $ endowed with an $\N^k$-module structure as in Definition \ref{ex: special case RD groupoid module}  and Remark \ref{rmk:application of Theorem}, Theorem \ref{prop:explicit description fo cocycles associated to cocycle condition} specializes to outline the relationship between $ \ZZ{\DRG{X}{\sigma}}{H} $ and $\N^k$-cocycles
with values in $A$. \\

% ---------------------------------------------------------------------------------------------------------------------------------------------

\begin{Cor} \label{prop: Alex semigroup cocycles correspondence}
Let $ \DRG{X}{\sigma} $ be a groupoid associated to a $k$-tuple $\sigma$ of commuting local homeomorphisms of a compact metric space $X$, and endow $ A = \Cont{X,H} $ with the structure of an $ \N^{k} $-module as in Definition \ref{ex: special case RD groupoid module}. Then there is an isomorphism
$$
\Phi: \func{Z^{1}}{\N^{k},A} \longrightarrow \ZZ{\DRG{X}{\sigma}}{H},\quad  \func{\Phi}{c_{\varphi}} : = c_{X, \sigma, \varphi} ,
$$
 where, by means of Theorem \ref{prop:explicit description fo cocycles associated to cocycle condition}, $ c_{\varphi}  \in \func{Z^{1}}{\N^{k},A} $ is determined by the $ k $-tuple $\varphi=  \Seq{\varphi_{i}}{i \in \SqBr{k}} \in A^{k} $, and $ c_{X, \sigma, \varphi} $ is the continuous groupoid cocycle associated to $ \Trip{X}{\sigma}{\varphi} $ as in Proposition \ref{cor:The Classification Theorem for Continuous Real-Valued 1-Cocycles on Deaconu-Renault Groupoids}.

Moreover, $\Phi$ restricts to an isomorphism  between coboundary groups, and in addition induces a first cohomology group  isomorphism
$$
\overline{\Phi}: \func{H^{1}}{\N^{k},A} \cong \HH{\DRG{X}{\sigma}}{H}.
$$
\end{Cor}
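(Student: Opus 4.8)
The plan is to exhibit $\Phi$ as the composite of the two bijections already established and then to check that this composite is additive and carries coboundaries to coboundaries; the isomorphism on cohomology then follows formally. First I would record that, for the module $A = \Cont{X,H}$ of Definition \ref{ex: special case RD groupoid module}, the action is $\mathbf{e}_{i} \varphi_{j} = \varphi_{j} \circ \sigma_{i}$, so that the module cocycle condition \eqref{eq:module cocycle condition} on a $k$-tuple $\varphi = \Seq{\varphi_{i}}{i \in \SqBr{k}}$ is literally the cocycle condition \eqref{eq:the cocycle condition}. Consequently Theorem \ref{prop:explicit description fo cocycles associated to cocycle condition} gives a bijection between $\func{Z^{1}}{\N^{k},A}$ and the set of $k$-tuples satisfying the cocycle condition, via $c_{\varphi} \mapsto \Seq{\func{c_{\varphi}}{\mathbf{e}_{i}}}{i \in \SqBr{k}} = \varphi$, while Proposition \ref{cor:The Classification Theorem for Continuous Real-Valued 1-Cocycles on Deaconu-Renault Groupoids} gives a bijection between that same set and $\ZZ{\DRG{X}{\sigma}}{H}$ via $\varphi \mapsto c_{X,\sigma,\varphi}$. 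Their composite is $\Phi$, which is therefore a bijection.

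Next I would check that $\Phi$ is a group homomorphism. The evaluation map $\func{Z^{1}}{\N^{k},A} \to A^{k}$, $c \mapsto \Seq{\func{c}{\mathbf{e}_{i}}}{i \in \SqBr{k}}$, is additive, so if $c_{\varphi}, c_{\psi}$ correspond to $\varphi, \psi$ then $c_{\varphi} + c_{\psi}$ corresponds to $\Seq{\varphi_{i} + \psi_{i}}{i \in \SqBr{k}}$, i.e. $c_{\varphi + \psi} = c_{\varphi} + c_{\psi}$ in $\func{Z^{1}}{\N^{k},A}$. Substituting into the defining formula $\func{c_{X,\sigma,\varphi}}{x,m-n,y} = \FUNC{\func{c_{\varphi}}{m}}{x} - \FUNC{\func{c_{\varphi}}{n}}{y}$ and using that addition in $A$ and in $\ZZ{\DRG{X}{\sigma}}{H}$ are both pointwise yields $c_{X,\sigma,\varphi+\psi} = c_{X,\sigma,\varphi} + c_{X,\sigma,\psi}$, that is $\func{\Phi}{c_{\varphi} + c_{\psi}} = \func{\Phi}{c_{\varphi}} + \func{\Phi}{c_{\psi}}$. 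A bijective homomorphism of groups is an isomorphism, establishing the first claim.

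For the coboundary assertion I would simply match the two criteria. By part (3) of Theorem \ref{prop:explicit description fo cocycles associated to cocycle condition}, $c_{\varphi} \in \func{B^{1}}{\N^{k},A}$ iff there is $\psi \in \Cont{X,H}$ with $\varphi_{i} = \psi - \mathbf{e}_{i}\psi = \psi - \psi \circ \sigma_{i}$ for all $i$; by the coboundary clause of Proposition \ref{cor:The Classification Theorem for Continuous Real-Valued 1-Cocycles on Deaconu-Renault Groupoids}, $\func{\Phi}{c_{\varphi}} = c_{X,\sigma,\varphi} \in \BB{\DRG{X}{\sigma}}{H}$ iff there is $\psi \in \Cont{X,H}$ with $\func{\varphi_{i}}{x} = \func{\psi}{x} - \func{\psi}{\func{\sigma_{i}}{x}}$. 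These are the same condition, so $\Phi$ maps $\func{B^{1}}{\N^{k},A}$ onto $\BB{\DRG{X}{\sigma}}{H}$ and, being a restriction of an isomorphism, restricts to an isomorphism of coboundary groups. Since an isomorphism carrying one subgroup onto another descends to the quotients, we obtain the induced isomorphism $\overline{\Phi}: \func{H^{1}}{\N^{k},A} \cong \HH{\DRG{X}{\sigma}}{H}$.

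I do not expect a genuine obstacle: bijectivity is inherited wholesale from the two cited results, and the only substantive checks are the additivity of $\Phi$ and the observation that the algebraic and groupoid coboundary descriptions coincide verbatim once one unwinds $\mathbf{e}_{i}\psi = \psi \circ \sigma_{i}$. The one point requiring care is to confirm that under the module structure of Definition \ref{ex: special case RD groupoid module} the two coboundary equations are genuinely identical rather than merely analogous, which is what makes the coboundary matching immediate rather than a separate computation.
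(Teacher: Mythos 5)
Your proposal is correct and takes essentially the same route as the paper's proof: obtain $\Phi$ by composing the two bijections from Theorem \ref{prop:explicit description fo cocycles associated to cocycle condition} and Proposition \ref{cor:The Classification Theorem for Continuous Real-Valued 1-Cocycles on Deaconu-Renault Groupoids}, check additivity at the generators $\mathbf{e}_{i}$, identify the two coboundary criteria (which coincide verbatim once $\mathbf{e}_{i}\psi = \psi \circ \sigma_{i}$ is unwound), and pass to quotients. The only differences are presentational: you supply the additivity verification that the paper declares clear, and you invoke the coboundary clause of Proposition \ref{cor:The Classification Theorem for Continuous Real-Valued 1-Cocycles on Deaconu-Renault Groupoids} directly where the paper recomputes the groupoid coboundary explicitly as $c_{f}(x,l,y) = f(x) - f(y)$.
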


\begin{proof}
It is clear that $\Phi$ preserves the group operations between $ \func{Z^{1}}{\N^{k},A} $ and $ \ZZ{\DRG{X}{\sigma}}{H} $, and  Theorem \ref{prop:explicit description fo cocycles associated to cocycle condition} shows that $\Phi$ is a bijection. So $\Phi$ is an isomorphism.  It only remains to show that $\Phi$ induces an isomorphism between  $ \func{B^{1}}{\N^k,A} $ and  $\BB{\DRG{X}{\sigma}}{H} $. For, assume that $c_\varphi $ is a coboundary, that is,  there exists $ {f } \in A $ such that $c_\varphi =\gamma_f$, which implies  $ \varphi_{i} = f - \mathbf{e}_{i} f $ for all  $ i \in \SqBr{k} $. Define, for all $i \in \SqBr{k}$: 
$$
  \func{\varphi_{i}}{x}
= \func{f}{x} - \Func{\mathbf{e}_{i} f}{x}
= \func{f}{x} - \func{f}{\func{\sigma^{\mathbf{e}_i}}{x}}.
$$
 Then the cocycle  $c_f := \Phi(c_\varphi)$ is given by, on  $ \Trip{x}{l}{y} \in \DRG{X}{\sigma} $ for $ l = m - n $ with $ \func{\sigma^{m}}{x} = \func{\sigma^{m}}{y} $:
$$
\func{c_{f}}{x,l,y} = \func{f}{x} - \func{f}{y},
$$

which is obviously the coboundary corresponding to $ f \in \Cont{X,H} $. It is clear that $\Phi$ is one-to-one and onto from $ \func{B^{1}}{\N^{k},\Cont{X,H}} $ to $ \BB{\DRG{X}{\sigma}}{H} $. By the Fundamental Theorem for group homomorphisms, $\Phi$ also  induces an isomorphism
$$
\func{H^{1}}{\N^{k},\Cont{X,H}} \cong \HH{\DRG{X}{\sigma}}{H}. \qedhere
$$
\end{proof}

% ----------------------------------------------------------------------------------------------------------------------

%%%%%%%%%%%%%%%%%%%%%%%%%%%%%%%%%%%%%%%%%%%%%%%%%%%%%%%%%%%%%%%%%%%%%%%%%%%%%%%%%%%%%%%%%%%%%%%%%%%%%%%%%%%%%%%%%%%%%%%%%%%%%%%%%%%%%%%%%%%%%%%
\subsection{Cocycles and Ruelle operators}
%%%%%%%%%%%%%%%%%%%%%%%%%%%%%%%%%%%%%%%%%%%%%%%%%%%%%%%%%%%%%%%%%%%%%%%%%%%%%%%%%%%%%%%%%%%%%%%%%%%%%%%%%%%%%%%%%%%%%%%%%%%%%%%%%%%%%%%%%%%%%%%

We now use Proposition \ref{cor:The Classification Theorem for Continuous Real-Valued 1-Cocycles on Deaconu-Renault Groupoids} to deduce properties of  Ruelle operators corresponding to $ k $-tuples $\varphi=  \Seq{\varphi_{i}}{i \in \SqBr{k}} $ satisfying the cocycle condition. Recall that by Definition \ref{Ruelle Triple} the Ruelle operator associated to the Ruelle triple $ \Trip{\tilde{X}}{\tilde{\sigma}}{\tilde{\phi}} $ 
is denoted by  $ \Ruelle{\tilde{X}}{\tilde{\sigma}}{\tilde{\phi}} $. \\

% ---------------------------------------------------------------------------------------------------------------------------------------------
	
\begin{Thm}\label{The Workhorse Lemma} Let $X$ be a compact Hausdorff space,   ${\sigma} =( \sigma_i)_{{i \in \SqBr{k}}}$ be a 
k-tuple of commuting local homeomorphisms of $X$, and let 	$\varphi=  \Seq{\varphi_{i}}{i \in \SqBr{k}} $ be a k-tuple of elements of $A=\Cont{X,\R}$. 

\begin{enumerate}
	\item[(1)]  Assume that  $\varphi$ satisfies the cocycle condition, and let 
$ c_{\varphi}\in \func{Z^{1}}{\N^{k},A} $  be the   cocycle corresponding to $\varphi$ as in Theorem \ref{prop:explicit description fo cocycles associated to cocycle condition}. Then  the assignment
\[ \Psi: \N^{k} \to \End{\Cont{X,\R}},\quad  {\Psi}(n) := \Ruelle{X}{\sigma^{n}}{\func{c_{\varphi}}{n}}   \] is a semigroup homomorphism.

\item[(2)]  If the  k-tuple of Ruelle operators $\Big( \Ruelle{X}{\sigma^{\bf e_i}}{\varphi_i } \Big)_{i \in \SqBr{k}}$ commutes, then $\varphi$ satisfies the cocycle condition.
\end{enumerate}
\end{Thm}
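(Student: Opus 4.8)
The plan is to derive both statements from the composition formula for Ruelle operators, \autoref{A Formula for the Composition of Two Ruelle Operators}, which reads $\Ruelle{X}{S}{\alpha} \circ \Ruelle{X}{T}{\beta} = \Ruelle{X}{S \circ T}{\alpha \circ T + \beta}$, combined with the observation that the $\N^{k}$-action on $A = \Cont{X,\R}$ is $n \cdot f = f \circ \sigma^{n}$ (\autoref{ex: special case RD groupoid module}). Everything except one injectivity lemma will then be a matter of matching the $\alpha \circ T$ term of the composition formula to this module action.

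For (1), fix $m,n \in \N^{k}$ and apply the composition formula with $S = \sigma^{m}$, $\alpha = \func{c_{\varphi}}{m}$, $T = \sigma^{n}$, $\beta = \func{c_{\varphi}}{n}$. Since the $\sigma_{i}$ commute we have $\sigma^{m} \circ \sigma^{n} = \sigma^{m+n}$, so
$$
\Psi(m) \circ \Psi(n) = \Ruelle{X}{\sigma^{m+n}}{\func{c_{\varphi}}{m} \circ \sigma^{n} + \func{c_{\varphi}}{n}}.
$$
Now $\func{c_{\varphi}}{m} \circ \sigma^{n} = n\,\func{c_{\varphi}}{m}$ is exactly the module action, so the potential equals $\func{c_{\varphi}}{n} + n\,\func{c_{\varphi}}{m}$, which by the cocycle identity $\func{c_{\varphi}}{n+m} = \func{c_{\varphi}}{n} + n\,\func{c_{\varphi}}{m}$ for $c_{\varphi} \in \func{Z^{1}}{\N^{k},A}$ (\autoref{prop:explicit description fo cocycles associated to cocycle condition}) equals $\func{c_{\varphi}}{m+n}$. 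Hence $\Psi(m) \circ \Psi(n) = \Ruelle{X}{\sigma^{m+n}}{\func{c_{\varphi}}{m+n}} = \Psi(m+n)$, so $\Psi$ is a homomorphism into $\End{\Cont{X,\R}}$ (with $\Psi(0) = \Id$ falling out of $\func{c_{\varphi}}{0} = 0$). This part is pure bookkeeping; the only point requiring care is that the commutativity of $\N^{k}$ makes the cocycle identity land in precisely the order produced by the composition formula.

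For (2), I compute both orders of composition of $\Ruelle{X}{\sigma_{i}}{\varphi_{i}}$ and $\Ruelle{X}{\sigma_{j}}{\varphi_{j}}$ by the same formula:
\begin{align*}
\Ruelle{X}{\sigma_{i}}{\varphi_{i}} \circ \Ruelle{X}{\sigma_{j}}{\varphi_{j}} &= \Ruelle{X}{\sigma_{i} \circ \sigma_{j}}{\varphi_{i} \circ \sigma_{j} + \varphi_{j}}, \\
\Ruelle{X}{\sigma_{j}}{\varphi_{j}} \circ \Ruelle{X}{\sigma_{i}}{\varphi_{i}} &= \Ruelle{X}{\sigma_{j} \circ \sigma_{i}}{\varphi_{j} \circ \sigma_{i} + \varphi_{i}}.
\end{align*}
Because $\sigma_{i} \circ \sigma_{j} = \sigma_{j} \circ \sigma_{i}$, both right-hand sides are Ruelle operators for the \emph{same} local homeomorphism, and the commutation hypothesis makes them equal. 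It then remains to recover the equality of potentials from equality of operators, i.e. to show that $\psi \mapsto \Ruelle{X}{T}{\psi}$ is injective for a fixed surjective local homeomorphism $T$. This is the crux of the argument and the step where the local-homeomorphism structure is genuinely used. To establish it, fix $y_{0} \in X$, put $x = \func{T}{y_{0}}$, and note that since $T$ is a local homeomorphism and $X$ is compact the fiber $\InvIm{T}{\SSet{x}}$ is finite with distinct points; by normality of the compact Hausdorff space $X$ there is $f \in \Cont{X,\R}$ with $\func{f}{y_{0}} = 1$ and $\func{f}{y} = 0$ for the remaining $y \in \InvIm{T}{\SSet{x}}$. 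Evaluating the defining sum \eqref{eq:def of ruelle operator} at $x$ gives $\FUNC{\func{\Ruelle{X}{T}{\psi}}{f}}{x} = e^{\func{\psi}{y_{0}}}$, so equality of the two operators forces $e^{\func{\psi_{1}}{y_{0}}} = e^{\func{\psi_{2}}{y_{0}}}$, hence $\func{\psi_{1}}{y_{0}} = \func{\psi_{2}}{y_{0}}$; as $T$ is surjective $y_{0}$ ranges over all of $X$, giving $\psi_{1} = \psi_{2}$. Applying this to the two potentials above yields $\varphi_{i} \circ \sigma_{j} + \varphi_{j} = \varphi_{j} \circ \sigma_{i} + \varphi_{i}$ for all $i,j$, which is precisely the cocycle condition \eqref{eq:the cocycle condition}.

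I expect the injectivity lemma to be the main obstacle: the composition formula does all the algebraic work for free, but isolating a single preimage by a test function—and thereby reading the potential off the operator—is the one place where finiteness of fibers and the separation properties of $X$ are essential.
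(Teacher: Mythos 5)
Your proof is correct and follows essentially the same route as the paper: part (1) is the composition formula of Lemma \ref{A Formula for the Composition of Two Ruelle Operators} together with the cocycle identity (which is all the paper's proof invokes), and for part (2) your injectivity lemma is exactly the paper's argument --- a Urysohn function isolating a single point of a finite fiber, evaluated at the image point --- merely repackaged as a standalone statement after first applying the composition formula. The only differences are cosmetic: you correctly read the composed potential as $\varphi \circ T + \psi$ (silently fixing the paper's typo ``$\varphi \circ h$'' in Lemma \ref{A Formula for the Composition of Two Ruelle Operators}), and your appeal to surjectivity of $T$ in the injectivity step is unnecessary (every $y_{0} \in X$ already lies in the fiber over $T(y_{0})$), though harmless.
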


\begin{proof}
For the proof of (1) see  Lemma \ref{A Formula for the Composition of Two Ruelle Operators}, which relies on \cite[Proposition 2.2]{ER} (this latter reference was pointed to us by the referee, whom we thank you for it).
	
To prove (2), let $ i,j \in \SqBr{k} $, $ x \in X $, and $ z = \Func{\sigma_{i} \circ \sigma_{j}}{x} $. As the set
$$
  S
= \InvIm{\Br{\sigma_{i} \circ \sigma_{j}}}{\SSet{z}}
= \InvIm{\Br{\sigma_{j} \circ \sigma_{i}}}{\SSet{z}}
$$
is finite, we can use Urysohn's Lemma to find an $ f \in \Cont{X,\R} $ such that $ \func{f}{x} = 1 $ and $ \func{f}{y} = 0 $ for all $ y \in S \setminus \SSet{x} $. Then
\begin{align*}
	  e^{\func{\varphi_{i}}{\func{\sigma_{j}}{x}} + \func{\varphi_{j}}{x}}
& = \FUNC{\Func{\Ruelle{X}{\sigma_{i}}{\varphi_{i}} \circ \Ruelle{X}{\sigma_{j}}{\varphi_{j}}}{f}}{z} \\
& = \FUNC{\Func{\Ruelle{X}{\sigma_{j}}{\varphi_{j}} \circ \Ruelle{X}{\sigma_{i}}{\varphi_{i}}}{f}}{z} \\
& = e^{\func{\varphi_{j}}{\func{\sigma_{i}}{x}} + \func{\varphi_{i}}{x}},
\end{align*}
which implies $ \func{\varphi_{i}}{\func{\sigma_{j}}{x}} + \func{\varphi_{j}}{x} = \func{\varphi_{j}}{\func{\sigma_{i}}{x}} + \func{\varphi_{i}}{x} $. As $ x \in X $ is arbitrary, we have proved Equation \eqref{eq:the cocycle condition}.
\end{proof}

% ---------------------------------------------------------------------------------------------------------------------------------------------

%%%%%%%%%%%%%%%%%%%%%%%%%%%%%%%%%%%%%%%%%%%%%%%%%%%%%%%%%%%%%%%%%%%%%%%%%%%%%%%%%%%%%%%%%%%%%%%%%%%%%%%%%%%%%%%%%%%%%%%%%%%%%%%%%%%%%%%%%%%%%%%
\section{Ruelle dynamical systems} \label{sec:Ruelle-Dyn-systems} %%%%%%%%%%%%%%%%%%%%%%%%%%%%%%%%%%%%%%%%%%%%%%%%%%%%%%%%%%%%%%%%%%%%%%%%%%%%%
%%%%%%%%%%%%%%%%%%%%%%%%%%%%%%%%%%%%%%%%%%%%%%%%%%%%%%%%%%%%%%%%%%%%%%%%%%%%%%%%%%%%%%%%%%%%%%%%%%%%%%%%%%%%%%%%%%%%%%%%%%%%%%%%%%%%%%%%%%%%%%%

We now introduce the main objects of our study: $ k $-Ruelle dynamical systems, which are higher-rank analogs of Ruelle triples. \\

% ---------------------------------------------------------------------------------------------------------------------------------------------
	
\begin{Def}[$ k $-Ruelle Dynamical Systems] \label{Ruelle Dynamical System}
A \emph{$ k $-Ruelle dynamical system} is an ordered triple $ \Trip{X}{\sigma}{\varphi} $ that satisfies conditions (1), (2), and (3) of Definition \ref{The Cocycle Condition} and the cocycle condition. For a $ k $-Ruelle dynamical system $ \Trip{X}{\sigma}{\varphi} $, we will denote by $ c_{X,\sigma,\varphi} $ the unique $ c \in \ZZ{\DRG{X}{\sigma}}{H} $ such that for all $ i \in \SqBr{k} $ and $ x \in X $, we have
$$
\func{c}{x,\mathbf{e}_{i},\func{\sigma_{i}}{x}} = \func{\varphi_{i}}{x}.
$$
Note that the existence of such a $ 1 $-cocycle is guaranteed by Proposition \ref{cor:The Classification Theorem for Continuous Real-Valued 1-Cocycles on Deaconu-Renault Groupoids}.
\end{Def}

% ---------------------------------------------------------------------------------------------------------------------------------------------

In Definition \ref{Ruelle Dynamical System}, we could have replaced the cocycle condition by any of its equivalent formulations in Proposition \ref{cor:The Classification Theorem for Continuous Real-Valued 1-Cocycles on Deaconu-Renault Groupoids}. However the cocycle condition usually is the easiest of the three equivalent conditions in Proposition \ref{cor:The Classification Theorem for Continuous Real-Valued 1-Cocycles on Deaconu-Renault Groupoids} to verify and work with.

In analogy with triples that satisfy the unique positive eigenvalue condition of  Definition \ref{RPF Ruelle Triple}, we define \\

% ---------------------------------------------------------------------------------------------------------------------------------------------

\begin{Def} \label{RPF Ruelle Dynamical System} 
A $ k $-Ruelle dynamical system $ \Trip{X}{\sigma}{\varphi} $ is said to \emph{admit a unique solution for the positive eigenvalue problem of the dual of the Ruelle operator} if there exists a unique ordered pair $ \Pair{\bm{\lambda}^{X,\sigma,\varphi}}{\mu^{X,\sigma,\varphi}} $  with the following properties:
\begin{enumerate}
\item[(1)]
 ${\bm{\lambda}^{X,\sigma,\varphi} }= \Seq{\lambda_{i}^{X,\sigma,\varphi}}{i \in \SqBr{k}} $ is a $ k $-tuple of positive real numbers.

\item[(2)]
$ \mu^{X,\sigma,\varphi} $ is a  Borel probability measure on $ X $.

\item[(3)] If $\big( {\Ruelle{X}{\sigma_{i}}{\varphi_{i}}}\big)^{\ast}$ denotes  the dual of the Ruelle operator, then 
for each $ i \in \SqBr{k} $:
\[ \func{\Br{\Ruelle{X}{\sigma_{i}}{\varphi_{i}}}^{\ast}}{\mu^{X,\sigma,\varphi}} = \lambda_{i}^{X,\sigma,\varphi} \mu^{X,\sigma,\varphi}.\]
\end{enumerate}
\end{Def}

% ---------------------------------------------------------------------------------------------------------------------------------------------

The next result is the $ k $-tuples version of Theorem \ref{thm:RPF-Theorem}. \\

% ---------------------------------------------------------------------------------------------------------------------------------------------

\begin{Thm} \label{The RPF Theorem for Ruelle Dynamical Systems}
A $ k $-Ruelle dynamical system $ \Trip{X}{\sigma}{\varphi} $ satisfies Definition \ref{RPF Ruelle Dynamical System} if there exists $ n \in \N^{k} \setminus \SSet{0} $ such that $ \Trip{X}{\sigma^{n}}{\func{c_{\varphi}}{n}} $ satisfies the  unique positive eigenvalue condition of  Definition \ref{RPF Ruelle Triple}.
\end{Thm}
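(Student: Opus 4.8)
The plan is to exploit the commutativity of the Ruelle operators, supplied by \autoref{The Workhorse Lemma}(1), in order to transport the unique positive eigenmeasure of the single transfer operator attached to $\sigma^{n}$ to a \emph{simultaneous} eigenmeasure for the individual operators $\Ruelle{X}{\sigma_{i}}{\varphi_{i}}$. Write $L_{i} \df \Ruelle{X}{\sigma_{i}}{\varphi_{i}}$ and, for $m \in \N^{k}$, $L_{m} \df \Ruelle{X}{\sigma^{m}}{\func{c_{\varphi}}{m}}$. Since $\Trip{X}{\sigma}{\varphi}$ is a $k$-Ruelle dynamical system, $\varphi$ satisfies the cocycle condition, so \autoref{The Workhorse Lemma}(1) tells us that $m \mapsto L_{m}$ is a semigroup homomorphism on the abelian semigroup $\N^{k}$; in particular the operators $L_{m}$ mutually commute, $L_{\mathbf{e}_{i}} = L_{i}$, and $L_{n} = \prod_{i} L_{i}^{n_{i}}$. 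Passing to duals on $\M{X}$, the operators $L_{m}^{\ast}$ likewise commute, and each $L_{i}^{\ast}$ is positive because $L_{i}$ carries nonnegative functions to nonnegative functions.

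For existence, let $n \in \N^{k} \setminus \SSet{0}$ be as in the hypothesis and let $\Pair{\lambda_{0}}{\mu_{0}}$ be the unique pair with $\lambda_{0} > 0$, $\mu_{0}$ a Borel probability measure, and $\func{L_{n}^{\ast}}{\mu_{0}} = \lambda_{0} \mu_{0}$. For each $i \in \SqBr{k}$ I would first observe that $\func{L_{i}^{\ast}}{\mu_{0}}$ is a nonzero positive measure: positivity is immediate from the previous paragraph, while its total mass $\int_{X} \func{L_{i}}{1} \, \d \mu_{0}$ is strictly positive because the fibres of the surjection $\sigma_{i}$ are nonempty and $e^{\varphi_{i}} > 0$. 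Next, using $L_{n}^{\ast} L_{i}^{\ast} = L_{i}^{\ast} L_{n}^{\ast}$, I compute $\func{L_{n}^{\ast}}{\func{L_{i}^{\ast}}{\mu_{0}}} = \func{L_{i}^{\ast}}{\lambda_{0} \mu_{0}} = \lambda_{0}\, \func{L_{i}^{\ast}}{\mu_{0}}$, so setting $\lambda_{i} \df \FUNC{\func{L_{i}^{\ast}}{\mu_{0}}}{X} > 0$, the normalized measure $\lambda_{i}^{-1}\func{L_{i}^{\ast}}{\mu_{0}}$ is again a Borel probability measure fixed by $L_{n}^{\ast}$ with eigenvalue $\lambda_{0}$. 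The uniqueness clause of \autoref{RPF Ruelle Triple}, applied to the triple $\Trip{X}{\sigma^{n}}{\func{c_{\varphi}}{n}}$, then forces $\lambda_{i}^{-1}\func{L_{i}^{\ast}}{\mu_{0}} = \mu_{0}$, i.e. $\func{L_{i}^{\ast}}{\mu_{0}} = \lambda_{i} \mu_{0}$. Hence $\Pair{\Seq{\lambda_{i}}{i \in \SqBr{k}}}{\mu_{0}}$ solves the positive eigenvalue problem of \autoref{RPF Ruelle Dynamical System}.

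For uniqueness, suppose $\Pair{\Seq{\lambda_{i}'}{i \in \SqBr{k}}}{\mu'}$ is any solution. Applying $L_{n}^{\ast} = \prod_{i}\Br{L_{i}^{\ast}}^{n_{i}}$ to $\mu'$ gives $\func{L_{n}^{\ast}}{\mu'} = \Br{\prod_{i} \Br{\lambda_{i}'}^{n_{i}}} \mu'$; since $\prod_{i} \Br{\lambda_{i}'}^{n_{i}} > 0$ and $\mu'$ is a probability measure, the pair $\Pair{\prod_{i} \Br{\lambda_{i}'}^{n_{i}}}{\mu'}$ satisfies the single-operator eigenvalue equation, so the uniqueness hypothesis forces $\mu' = \mu_{0}$. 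Evaluating $\func{L_{i}^{\ast}}{\mu_{0}} = \lambda_{i}' \mu_{0}$ on $X$ then yields $\lambda_{i}' = \FUNC{\func{L_{i}^{\ast}}{\mu_{0}}}{X} = \lambda_{i}$, so the solution is unique. The step that genuinely carries the argument — and which I expect to be the main obstacle to state cleanly — is the transport step: it is precisely the commutativity of $L_{i}^{\ast}$ with $L_{n}^{\ast}$ (hence, via \autoref{The Workhorse Lemma}, the cocycle condition) that keeps $\func{L_{i}^{\ast}}{\mu_{0}}$ inside the distinguished one-dimensional eigenspace of $L_{n}^{\ast}$, so that the single-operator uniqueness can be leveraged; the remaining verifications are routine bookkeeping with positivity and total masses.
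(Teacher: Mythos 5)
Your proposal is correct and follows essentially the same route as the paper's proof: both use the commutativity supplied by Theorem \ref{The Workhorse Lemma} to show that $\Br{\Ruelle{X}{\sigma_{i}}{\varphi_{i}}}^{\ast}\mu$ is a positive eigenmeasure of $\Br{\Ruelle{X}{\sigma^{n}}{\func{c_{\varphi}}{n}}}^{\ast}$ with the same eigenvalue, normalize it by its (strictly positive, by surjectivity of $\sigma_{i}$) total mass, and invoke the single-operator uniqueness of Definition \ref{RPF Ruelle Triple} to conclude $\Br{\Ruelle{X}{\sigma_{i}}{\varphi_{i}}}^{\ast}\mu = \lambda_{i}\mu$. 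Your uniqueness argument — applying the dual of the $\sigma^{n}$-operator to an arbitrary simultaneous eigenmeasure to get eigenvalue $\alpha^{n} > 0$ and forcing $\nu = \mu$ — is likewise exactly the paper's second half.
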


\begin{proof}
Suppose that there is an $ n \in \N^{k} \setminus \SSet{0} $ such that $ \Trip{X}{\sigma^{n}}{\func{c_{\varphi}}{n}} $, with ${\varphi_i=c_{\varphi}}({\bf e_i})  $,  is a triple that satisfies the unique positive eigenvalue condition of Definition \ref{RPF Ruelle Triple}. Theorem \ref{The Workhorse Lemma} tells us that, for all $i \in [k]$ and $n \in \N$,
$\Ruelle{X}{\sigma^{\mathbf{e}_{i}}}{\varphi_{i}} $
and $ \Ruelle{X}{\sigma^{n}}{\func{c_{\varphi}}{n}} $ commute, which implies that  $ \Ruelle{X}{\sigma^{\mathbf{e}_{i}}}{\varphi_{i}}^{\ast} $ and $ \Ruelle{X}{\sigma^{n}}{\func{c_{\varphi}}{n}}^{\ast} $ also commute, and that:
\begin{align*}
    \func{\Br{\Ruelle{X}{\sigma^{n}}{\func{c_\varphi}{{n}}}}^{\ast}}{\func{\Br{\Ruelle{X}{\sigma_{i}}{\varphi_{i}}}^{\ast}}{\mu}}
& = \func{\Br{\Ruelle{X}{\sigma_{i}}{\varphi_{i}}}^{\ast}}{\func{\Br{\Ruelle{X}{\sigma^{n}}{\func{c_\varphi}{{n}}}}^{\ast}}{\mu}} \\
& = \func{\Br{\Ruelle{X}{\sigma_{i}}{\varphi_{i}}}^{\ast}}{\lambda  \mu}  = \lambda  \func{\Br{\Ruelle{X}{\sigma_{i}}{\varphi_{i}}}^{\ast}}{\mu}.
\end{align*}
Hence, $ \func{\Br{\Ruelle{X}{\sigma_{i}}{\varphi_{i}}}^{\ast}}{\mu} $ is an eigenmeasure of $ \Br{\Ruelle{X}{\sigma^{n}}{\func{c_\varphi}{{n}}}}^{\ast} $ with eigenvalue $ \lambda $. As we have for all $ f \in \Cont{X,\R_{\geq 0}} $ that
$$
	   \Int{X}{f}{\func{\Br{\Ruelle{X}{\sigma_{i}}{\varphi_{i}}}^{\ast}}{\mu}}
=    \Int{X}{\func{\Ruelle{X}{\sigma_{i}}{\varphi_{i}}}{f}}{\mu}
\geq 0,
$$
it follows that $ \func{\Br{\Ruelle{X}{\sigma_{i}}{\varphi_{i}}}^{\ast}}{\mu} $ is a non-negative measure on $ X $. Furthermore, by the surjectivity of $ \sigma_{i} $,
$$
  \Int{X}{1_{X}}{\func{\Br{\Ruelle{X}{\sigma_{i}}{\varphi_{i}}}^{\ast}}{\mu}}
= \Int{X}{\func{\Ruelle{X}{\sigma_{i}}{\varphi_{i}}}{1_{X}}}{\mu}
> 0,
$$
where we denote by $1_{X}$ the characteristic function of $X$.
 It follows that
$$
\frac{\func{\Br{\Ruelle{X}{\sigma_{i}}{\varphi_{i}}}^{\ast}}{\mu}}{\D \Int{X}{\func{\Ruelle{X}{\sigma_{i}}{\varphi_{i}}}{1_{X}}}{\mu}}  
$$
is a probability eigenmeasure of $ \Br{\Ruelle{X}{\sigma^{n}}{\func{c_\varphi}{{{n}}}}}^{\ast} $ with eigenvalue $ \lambda $. Since $ \Trip{X}{\sigma^{n}}{\func{c_{\varphi}}{n}} $ satisfies Definition \ref{RPF Ruelle Triple}, we must have
$$
\Pair{\lambda}{\frac{\func{\Br{\Ruelle{X}{\sigma_{i}}{\varphi_{i}}}^{\ast}}{\mu}}{\D \Int{X}{\func{\Ruelle{X}{\sigma_{i}}{\varphi_{i}}}{1_{X}}}{\mu}}} = \Pair{\lambda}{\mu},
$$
which yields
$$
\func{\Br{\Ruelle{X}{\sigma_{i}}{\varphi_{i}}}^{\ast}}{\mu} = \Br{\Int{X}{\func{\Ruelle{X}{\sigma_{i}}{\varphi_{i}}}{1_{X}}}{\mu}} \mu.
$$

Now if $ \alpha = \Seq{\alpha_{i}}{i \in \SqBr{k}} $ is a $ k $-tuple in $ \R_{> 0} $, and $ \nu $ a probability  Borel measure on $ X $ such that for all $ i \in \SqBr{k} $,
$$
\func{\Br{\Ruelle{X}{\sigma_{i}}{\varphi_{i}}}^{\ast}}{\nu} = \alpha_{i} \nu,
$$
then by Theorem \ref{The Workhorse Lemma}, we get, if we set $\alpha =(\alpha_1,\ldots, \alpha_k)$,
$$
\func{\Br{\Ruelle{X}{\sigma^{n}}{\func{c_{\varphi}}{n}}}^{\ast}}{\nu} = \alpha^{n} \nu.
$$

As $ \alpha^{n} > 0 $ and $ \Trip{X}{\sigma^{n}}{\func{c_\varphi}{n}} $ satisfies Definition  \ref{RPF Ruelle Triple}, we also obtain $ \Pair{\alpha^{n}}{\nu} = \Pair{\lambda}{\mu} $, so $ \nu = \mu.$  Hence, for all $ i \in \SqBr{k} $,
$$
\alpha_{i} \mu = \Br{\Int{X}{\func{\Ruelle{X}{\sigma_{i}}{\varphi_{i}}}{1_{X}}}{\mu}} \mu,
$$
which yields $ \alpha_{i} = \Int{X}{\func{\Ruelle{X}{\sigma_{i}}{\varphi_{i}}}{1_{X}}}{\mu} $ for all $ i \in \SqBr{k} $.
\end{proof}
	
% ---------------------------------------------------------------------------------------------------------------------------------------------

We will now give two examples of $ k $-Ruelle dynamical systems. \\

% ---------------------------------------------------------------------------------------------------------------------------------------------

\begin{Eg} \label{An RPF Ruelle Dynamical System on a Cantor Space}
For $ k \in \N_{> 1} $, let $ X \df \Z_{k}^{\N_{> 0}} $ be equipped with the product topology, where $ \Z_{k} \df \Z / k \Z $. It is well-known that the cylinder sets of $X$ form a basis for the topology on $X$; recall that every finite word   $\alpha \in (\Z_k)^m$ defines an associated  cylinder set  $\mathcal{Z}[\alpha]$ by:
\[\mathcal{Z}[\alpha] : = \left\{x\in X\ |\ (x_j)_{j=1,\ldots, m}= \alpha \right\}.\]
   Define a commuting $ k $-tuple $ \sigma = \Seq{\sigma_{i}}{i \in \SqBr{k}} $ of surjective local homeomorphisms on $ X $ by
$$
\forall i \in \SqBr{k}, ~ \forall x \in X: \qquad
\func{\sigma_{i}}{x} \df \Seq{x_{n + 1} + \Br{i - 1}}{n \in \N_{> 0}}.
$$
We want to verify  Definition  \ref{The Walters Criteria and the Jiang-Ye Conditions}. To do so we first check that $ \sigma_{i} $ is positively expansive and exact for each $ i \in \SqBr{k} $, and subsequently verify the $ \varphi $ is H\"{o}lder-continuous.

A straightforward calculation shows that, if we define
$ U :=\Set{  \Pair{x}{y} \in X \times X}{x_{1} = y_{1}} $, then:
$$
          \Diag{X}
\subseteq U
=         \bigcup_{i \in \SqBr{k}} \mathcal{Z}[i]  \times \mathcal{Z}[i] .
$$
To deduce that $ \sigma_{i} $ is positively expansive for each $ i \in \SqBr{k} $, simply observe that if $ x,y \in X $ are distinct, then $ \Pair{\func{\sigma_{i}^{m - 1}}{x}}{\func{\sigma_{i}^{m - 1}}{y}} \notin U $, where $ m \df \func{\min}{\Set{n \in \N_{> 0}}{x_{n} \neq y_{n}}} $.

Now we will show that $ \sigma_{i} $ is exact for each $ i \in \SqBr{k} $.   Let $U \subseteq X$ be a nonempty open set. Since the cylinder sets form a basis for the topology on $X$, there is a cylinder set $\mathcal{Z}[\alpha] \subseteq U,$
with $|\alpha|=m,$ for some $m \in \N_{>0}.$
Then, $ \Im{\sigma_{i}^{m}}{U} \supset \Im{\sigma_{i}^{m}}{\mathcal{Z}[\alpha] }= X $, which means that $ \sigma_{i} $ is exact for each $ i \in \SqBr{k} $.
		
Let $ d: X \times X\to \R_{\geq 0} $ denote the compatible metric on $ X $ defined by, for all $x,y \in X$: 
$$
\func{d}{x,y} \df 2^{- \func{\min}{\Set{n \in \N_{> 0}}{x_{n} \neq y_{n}}}},
$$
where $ \func{\min}{\varnothing} \df \infty $ by convention.

Now, let $ \Seq{a_{i}}{i \in \SqBr{k}} \in \R^{k} $, and define $ \varphi: X \to \R $ by,
for all $i \in \SqBr{k}, ~ \forall x \in X$: 
$$
\func{\varphi}{x} \df a_{i} \iff \sum_{n = 1}^{k} x_{n} = i - 1 ~ \Br{\text{mod} ~ k}.
$$
Clearly, $ \varphi $ is continuous. Indeed, we note that the value of $ \varphi $ at $ x\in X $ only depends on the first $ k $ components of $ x $, so that if $ x,y \in X$ and $ \func{d}{x,y} < 2^{- k} $, then $ \func{\varphi}{x} = \func{\varphi}{y} $ so that $ \Abs{\func{\varphi}{x} - \func{\varphi}{y}} = 0 $ in that case. One therefore computes, for all $ x,y \in X $,
$$
\Abs{\func{\varphi}{x} - \func{\varphi}{y}} \leq 2^{k} \Br{\max_{i,j \in \SqBr{k}} \Abs{a_{i} - a_{j}}} \func{d}{x,y},
$$
which implies that $ \varphi $ is H\"{o}lder-continuous with respect to $ d $.

Let $ \Seq{c_{i}}{i \in \SqBr{k}} \in \R^{k} $. Then for all $ i,j \in \SqBr{k} $ and for all $ x \in X $,
$$
  \Func{\varphi + c_{i} 1_{X}}{\func{\sigma_{j}}{x}} - \Func{\varphi + c_{j} 1_{X}}{\func{\sigma_{i}}{x}}
= \Br{\func{\varphi}{\func{\sigma_{i}}{x}} + c_{i}} - \Br{\func{\varphi}{\func{\sigma_{j}}{x}} + c_{j}}
= c_{i} - c_{j},
$$
the final equality holding because $ \func{\varphi}{\func{\sigma_{i}}{x}} = \func{\varphi}{\func{\sigma_{j}}{x}} $. Therefore, $ \Trip{X}{\Seq{\sigma_{i}}{i \in \SqBr{k}}}{\Seq{\varphi + c_{i} 1_{X}}{i \in \SqBr{k}}} $ is a Ruelle dynamical system, and since the Ruelle triple $ \Trip{X}{\sigma_{i}}{\varphi_{i}} $ satisfies conditions of Definition \ref{The Walters Criteria and the Jiang-Ye Conditions} for each $ i \in \SqBr{k} $, we conclude that $ \Trip{X}{\Seq{\sigma_{i}}{i \in \SqBr{k}}}{\Seq{\varphi + c_{i}  1_{X}}{i \in \SqBr{k}}} $ satisfies the unique positive eigenvalue condition of  Definition \ref{RPF Ruelle Triple}.
\end{Eg}

% ---------------------------------------------------------------------------------------------------------------------------------------------

The next example exhibits a dynamical system on a non-Cantor space, admitting a unique solution to the positive eigenvalue problem. \\

% ---------------------------------------------------------------------------------------------------------------------------------------------

\begin{Eg} \label{An RPF Ruelle Dynamical System on the Unit Circle}
Let $ n = \Seq{n_{i}}{i \in \SqBr{k}} $ be a $ k $-tuple, $ n_{i} \in \Z \setminus \SSet{0,\pm 1} $, and define a commuting $ k $-tuple $ \sigma = \Seq{\sigma_{i}}{i \in \SqBr{k}} $ of surjective local homeomorphisms on $ \T $ by, for all $i \in \SqBr{k},$ and for all $z \in \T$: $\func{\sigma_{i}}{z} \df z^{n_{i}}.$
The local homeomorphism $ \sigma_{i} $ is expansive for each $ i \in \SqBr{k} $ by \cite{Sh}, top of p. 176. Now let $ U $ be a non-empty open subset of $ \T $. Then there exist $ \alpha,\beta \in \R $ such that $ \alpha < \beta $ and
$$
\Set{e^{\i \theta} \in \T}{\alpha < \theta < \beta} \subseteq U.
$$
Let $ i \in \SqBr{k} $, and let $ m \in \N $ be such that $ 2 \pi \leq m \Abs{n_{i}} \Br{\beta - \alpha} $. Then
\begin{align*}
  \Im{\sigma_{i}^{m}}{U}
& \supseteq \Im{\sigma_{i}^{m}}{\Set{e^{\i \theta} \in \T}{\alpha < \theta < \beta}} 
 =         \Set{e^{\i m n_{i} \theta} \in \T}{\alpha < \theta < \beta} \\
& =         \Set{e^{\i \theta} \in \T}{\theta ~ \text{between} ~ m n_{i} \alpha ~ \text{and} ~ m n_{i} \beta} =         \T,
\end{align*}
so $ \sigma_{i} $ is exact.

Let $ d $ denote the  metric on $ \T $,  defined by, for all $ \alpha,\beta \in \R $:
\[d(e^{i\alpha},e^{i\beta})= \text{inf}\{|\alpha-(\beta+n\pi)| : n\in\mathbb Z\}.\]
The metric $ d $ generates the standard topology on $ \T $.

Define a $ k $-tuple $ \varphi = \Seq{\varphi_{i}}{i \in \SqBr{k}} $ in $ \Cont{X,\C} $ by, for all $i \in \SqBr{k}, ~ \forall z \in \T$:  
$\func{\varphi_{i}}{z} = z^{n_{i}} - z.$
A straightforward calculation shows that $ \varphi_{i} $ is H\"{o}lder-continuous with respect to $ d $ for each $ i \in \SqBr{k} $, and that the ($ \C $-valued) cocycle condition 
 $ \varphi_{i} + \varphi_{j} \circ \sigma_{i} = \varphi_{j} + \varphi_{i} \circ \sigma_{j}$  holds.

Now let $ f: \C \to \R $ be a continuous additive map (e.g., $ \func{f}{z} = {Re}(z) $). As $ f $ is then H\"{o}lder-continuous with respect to the Euclidean metrics on $ \C $ and $ \R $, $ f \circ \varphi_{i} $ is also H\"older-continuous with respect to $ d $, and by the additivity of $ f $, for every $ \forall i,j \in \SqBr{k} $,
$$
	\Br{f \circ \varphi_{i}} + \Br{f \circ \varphi_{j}} \circ \sigma_{i}
= f \circ \Br{\varphi_{i} + \varphi_{j} \circ \sigma_{i}}
= f \circ \Br{\varphi_{j} + \varphi_{i} \circ \sigma_{j}}
= \Br{f \circ \varphi_{j}} + \Br{f \circ \varphi_{i}} \circ \sigma_{j},
$$
so that $ \Trip{\T}{\sigma}{\Seq{f \circ \varphi_{i}}{i \in \SqBr{k}}} $ is a Ruelle dynamical system. As the Ruelle triple $ \Trip{\T}{\sigma_{i}}{f \circ \varphi_{i}} $ satisfies the conditions in Definition  \ref{The Walters Criteria and the Jiang-Ye Conditions} for each $ i \in \SqBr{k} $, we conclude that $ \Trip{\T}{\sigma}{\Seq{f \circ \varphi_{i}}{i \in \SqBr{k}}} $ has a unique eigenmeasure as in Definition \ref{RPF Ruelle Dynamical System}. \\
\end{Eg}

% ---------------------------------------------------------------------------------------------------------------------------------------------

\begin{Eg} \label{An RPF Ruelle Dynamical System w commuting matrices  on the circle}
Fixing $ d \in \N $ where $ d > 1 $, define two commuting local homeomorphisms $ \sigma_{1} $ and $ \sigma_{2} $ of $ \T^{2} $ by, for all $z_{1},z_{2} \in \T$: 
$$
\func{\sigma_{1}}{z_{1},z_{2}} \df \Pair{z_{1}^{d}}{z_{2}^{d}}, \quad
\func{\sigma_{2}}{z_{1},z_{2}} \df \Pair{z_{1} z_{2}^{- 1}}{z_{1} z_{2}}.
$$
Since all the eigenvalues of the associated matrices $ M_{\sigma_{1}} \df \begin{bmatrix} d & 0 \\ 0 & d \end{bmatrix} $ and $ M_{\sigma_{2}} \df \begin{bmatrix} 1 & - 1 \\ 1 & 1 \end{bmatrix} $ have modulus larger than $ 1 $, $ \sigma_{1} $ and $ \sigma_{2} $ are toral endomorphisms that are positively expansive and exact (see \cite{Mi}, proof of Theorem 1). In this case, we can choose the functions $ \varphi_{j} \df c_{j} 1_{\T^{2}} $ to be constant functions. Therefore the Ruelle triple $ \Trip{\T^{2}}{\sigma_{i}}{c_{i} 1_{\T^{2}}} $ satisfies the  conditions of Definition  \ref{The Walters Criteria and the Jiang-Ye Conditions}, for each $ i \in \SSet{1,2} $.
\end{Eg}

% ---------------------------------------------------------------------------------------------------------------------------------------------

The next example is a combination of Examples \ref{An RPF Ruelle Dynamical System on the Unit Circle} and \ref{An RPF Ruelle Dynamical System on a Cantor Space}. \\

% ---------------------------------------------------------------------------------------------------------------------------------------------

\begin{Eg} \label{An RPF Ruelle Dynamical System w examples 5.4 and 5.5}
Let $ X \df \Z_{k}^{\N_{> 0}} $, with $ k \in \N_{> 1} $. Fixing $ I \in \SqBr{k} $ and $ J \in \N_{> 1} $, define local homeomorphisms $ \sigma_{I},\sigma_{J}: X \times \T \to X \times \T $,   by the following formula: For all $ \Pair{x}{z} \in X \times \T $, set
\[
\begin{split}
\func{\sigma_{I}}{x,z} & \df \Pair{\Seq{x_{n + 1} + \Br{I - 1}}{n \in \N_{> 0}}}{z}, \\
\func{\sigma_{J}}{x,z} & \df \Pair{x}{z^{J}}.
\end{split}
\]
It is clear that $ \sigma_{I} $ and $ \sigma_{J} $ commute and
and one calculates that the composition $ \sigma_{I} \circ \sigma_{J} $ is positively expansive and exact. By proceeding as in Examples  \ref{An RPF Ruelle Dynamical System on the Unit Circle} and \ref{An RPF Ruelle Dynamical System on a Cantor Space}, and using Theorem \ref{The RPF Theorem for Ruelle Dynamical Systems} with $ k = 2 $ and $ \Pair{n_{1}}{n_{2}} = \Pair{1}{1} $, the resulting Ruelle triple satisfies the conditions in Definition  \ref{The Walters Criteria and the Jiang-Ye Conditions}. \\
\end{Eg}

% ---------------------------------------------------------------------------------------------------------------------------------------------

\begin{Eg}
We will now compute Ruelle eigenvalues and eigenmeasures for the $ k $-Ruelle dynamical system $ \Trip{X}{\sigma}{\varphi} $, with $ X = \prod_{j \in \N} \SSet{0,1} $, where $ \SSet{0,1} = \Z_2 $, and $ \sigma = \Pair{\sigma_{1}}{\sigma_{2}} $ defined by, for $ x = \Seq{x_{n}}{n \in \N} $
$$
\func{\sigma_{1}}{x} \df \Seq{x_{n + 1}}{n \in \N}, \quad \func{\sigma_{2}}{x} \df \Seq{x_{n} + 1}{n \in \N}.
$$
Moreover, for $ a,b,c \in \R $, define $ \varphi = \Pair{\varphi_{1}}{\varphi_{2}} $ by the following equation, where below addition is considered modulo $ 2 $:
\begin{equation}
\func{\varphi_{1}}{x} \df \left\{ \begin{array}{cl} a & \text{if} ~ x_{0} + x_{1} = 0, \\ b & \text{if} ~ x_{0} + x_{1} = 1, \end{array} \right. \quad
\func{\varphi_{2}}{x} \df c.
\end{equation}
Firstly, it is a simple exercise to determine that the eigenvalues $ \lambda_{1},\lambda_{2} $ of the associated Ruelle operator are given by $ \lambda_{1} = e^{a} +e^{b} $ and $ \lambda_{2} = e^{c} $, and that $ \func{\mu}{\func{Z}{0}} = \func{\mu}{\func{Z}{1}} = \frac{1}{2} $.

Moreover, the eigenmeasure $ \mu $ on all of the cylinder sets can be computed by using induction, thus proving that $ \mu $ is defined on the cylinder sets of $ X $ according to the following probability diagram and formula. We leave the details of this calculation to the reader.
$$
\begin{tikzpicture}
[
scale=1.5,
font=\footnotesize,
level 1/.style={level distance=12mm,sibling distance=43mm},
level 2/.style={level distance=15mm,sibling distance=20mm},
level 3/.style={level distance=15mm,sibling distance=10mm},
solid node/.style={circle,draw,inner sep=1,fill=black},
]

\node(0)[solid node]{}

child{node(1)[solid node,label=left:{$0$}]{}
	child{node[solid node,label=left:{$0$}]{}
		child{node[solid node,label=below:{$0$}]{} edge from parent node [left]{$\frac{e^a}{e^a +e^b}$}}
		child{node[solid node,label=below:{$1$}]{} edge from parent node [right]{$\frac{e^b}{e^a +e^b}$}}
		edge from parent node [left]{$\frac{e^a}{e^a +e^b}$}
	}
	child{node[solid node,label=right:{$1$}]{}
		child{node[solid node,label=below:{$0$}]{} edge from parent node [left]{$\frac{e^b}{e^a +e^b}$}}
		child{ node[solid node,label=below:{$1$}]{} edge from parent node [right]{$\frac{e^a}{e^a +e^b}$}}
		edge from parent node [right]{$\frac{e^b}{e^a +e^b}$}
	}
	edge from parent node [left, yshift=3]{$\frac{1}{2}$}
}
child{node(2)[solid node,label=right:{$1$}]{}
	child{node[solid node,label=left:{$0$}]{}
		child{node[solid node,label=below:{$0$}]{} edge from parent node [left]{$\frac{e^a}{e^a +e^b}$}}
		child{ node[solid node,label=below:{$1$}]{} edge from parent node [right]{$\frac{e^b}{e^a +e^b}$}}
		edge from parent node [left]{$\frac{e^b}{e^a +e^b}$}
	}
	child{node[solid node,label=right:{$1$}]{}
		child{node[solid node,label=below:{$0$}]{} edge from parent node [left]{$\frac{e^b}{e^a +e^b}$}}
		child{ node[solid node,label=below:{$1$}]{} edge from parent node [right]{$\frac{e^a}{e^a +e^b}$}}
		edge from parent node [right]{$\frac{e^a}{e^a +e^b}$}
	}
	edge from parent node [right, yshift=3]{$\frac{1}{2}$}
};
\end{tikzpicture}
$$
\begin{equation}\label{eq:formula-for-the-eigen-measure}
\func{\mu}{\func{Z}{x_{0} x_{1} \ldots x_{n}}} = \frac{1}{2} \prod_{j = 0}^{n - 1} \frac{e^{\func{\psi}{x_{j} + x_{j + 1}}}}{e^{a} +e^{b}},
\end{equation}
where $ \psi: \SSet{0,1} \to \SSet{a,b} $ is defined by $ \func{\psi}{0} \df a $ and $ \func{\psi}{1} \df b $.

In the above formula, note that when $ n = 0 $, the resulting product is empty, and so by convention equal to to $ 1 $. Therefore, $ \func{\mu}{\func{Z}{x_{0}}} = \frac{1}{2} $, for all $ x_{0} \in \SSet{0,1} $.
\end{Eg}

%%%%%%%%%%%%%%%%%%%%%%%%%%%%%%%%%%%%%%%%%%%%%%%%%%%%%%%%%%%%%%%%%%%%%%%%%%%%%%%%%%%%%%%%%%%%%%%%%%%%%%%%%%%%%%%%%%%%%%%%%%%%%%%%%%%%%%%%%%%%%%%
\section{The Radon-Nikodym Problem and KMS states} \label{sec:Rad-Nyk-Meas-KMS} %%%%%%%%%%%%%%%%%%%%%%%%%%%%%%%%%%%%%%%%%%%%%%%%%%%%%%%%%%%%%%%
%%%%%%%%%%%%%%%%%%%%%%%%%%%%%%%%%%%%%%%%%%%%%%%%%%%%%%%%%%%%%%%%%%%%%%%%%%%%%%%%%%%%%%%%%%%%%%%%%%%%%%%%%%%%%%%%%%%%%%%%%%%%%%%%%%%%%%%%%%%%%%%

This section addresses the Radon-Nikodym problem for groupoids associated to a finite family of commuting local homeomorphisms of a compact metric space, which provides a link between quasi-invariant measures for these groupoids and KMS states for a generalized gauge dynamics on the associated $ C^{\ast} $-algebra. As a result, there will be a heavier emphasis on measure theory and topology than the previous sections. \\

% ---------------------------------------------------------------------------------------------------------------------------------------------

\begin{Def}[Pull-Back and Quasi-Invariant Measures] \label{Quasi-Invariance}
Let $ \mu $ be a  Borel probability measure defined on the Borel sets of the compact metric space $ X $ with associated Borel $ \sigma $-algebra $ \Borel{X} $, and let $ \sigma = \Seq{\sigma_{i}}{i \in \SqBr{k}} $ be a commuting $ k $-tuple of surjective local homeomorphisms on $ X $. Define regular Borel measures $ s^{\ast} \mu $ and $ r^{\ast} \mu $ on $ \DRG{X}{\sigma} $ by, for all $ B \in \Borel{\DRG{X}{\sigma}} $:
\begin{align*}
\Func{s^{\ast} \mu}{B} & \df \IInt{X}{\Br{\sum_{\gamma \in \DRG{X}{\sigma}_{x}} \func{1_{B}}{\gamma}}}{\mu}{x}, \\
\Func{r^{\ast} \mu}{B} & \df \IInt{X}{\Br{\sum_{\gamma \in \DRG{X}{\sigma}^{x}} \func{1_{B}}{\gamma}}}{\mu}{x},
\end{align*}
where we denoted by $ \DRG{X}{\sigma}_{x} $ (resp. $ \DRG{X}{\sigma}^{x} $) the set of arrows in $ \DRG{X}{\sigma} $ with source $ x $ (resp. range $ x $). We then say that $ \mu $ is \emph{quasi-invariant} for $ \DRG{X}{\sigma}$ if $ s^{\ast} \mu $ and $ r^{\ast} \mu $ are equivalent to one another, in which case a \emph{Radon-Nikodym derivative} for $ \mu $ is any measurable function on $ \DRG{X}{\sigma} $ in the same equivalence class (with respect to the equivalence relation module sets of measure zero) as $ \frac{\d r^{\ast} \mu}{\d s^{\ast} \mu} $ \cite[Section 3]{Re}.
\end{Def}

% ---------------------------------------------------------------------------------------------------------------------------------------------

The following lemma will be used in the proof of Theorem \ref{A Characterization of Solutions of the Radon-Nikodym Problem}.  Its proof follows  from the definition of local homeomorphism and the fact that $ X $ is compact.\\

% ---------------------------------------------------------------------------------------------------------------------------------------------

\begin{Lem} \label{A Uniform Upper Bound for the Pre-Images of Points under Local Homeomorphisms on a Compact Metrizable Space}
Let $ T: X \to Y $ be a local homeomorphism of topological spaces from the compact metric space $ X $ to the metric space $ Y $. Then $ \sup_{y \in Y} \Card{\InvIm{T}{\SSet{y}}} < \infty $.
\end{Lem}

% ---------------------------------------------------------------------------------------------------------------------------------------------

The following theorem is a generalization of Proposition 4.2 of \cite{Re1} from Deaconu-Renault groupoids to groupoids $ \DRG{X}{\sigma} $, and characterizes the solutions of the Radon-Nykodym problem in this general setting. Its proof is quite technical, although in part it is possible to rely on the steps given in Renault's proof in \cite{Re1}. \\

% ---------------------------------------------------------------------------------------------------------------------------------------------

\begin{Thm} \label{A Characterization of Solutions of the Radon-Nikodym Problem}
Let $ \Trip{X}{\sigma}{\varphi} $ be a $ k $-Ruelle  dynamical system, which satisfies the conditions of Definition \ref{RPF Ruelle Dynamical System}, and $ \mu $ a Borel probability  measure on $ X $. Then the following statements are equivalent:
\begin{enumerate}
\item[(1)]
$ \mu $ is quasi-invariant for $\DRG{X}{\sigma} $, and the Radon-Nikodym derivative $ \frac{\d r^{\ast} \mu}{\d s^{\ast} \mu} $ is the continuous function $ e^{c_{X,\sigma,\varphi}} $ on $ \DRG{X}{\sigma} $.

\item[(2)]
$ \func{\Br{\Ruelle{X}{\sigma_{i}}{\varphi_{i}}}^{\ast}}{\mu} = \mu $ for each $ i \in \SqBr{k} $.
\end{enumerate}
\end{Thm}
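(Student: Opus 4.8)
The plan is to show that condition (1) is equivalent to the single integral identity
\[
\int_{\DRG{X}{\sigma}} F \; \d r^{\ast}\mu = \int_{\DRG{X}{\sigma}} F \, e^{c_{X,\sigma,\varphi}} \; \d s^{\ast}\mu, \qquad F \in \Cc{\DRG{X}{\sigma}},
\]
and then to verify this identity by reducing it, via the cocycle structure, to the $ k $ scalar identities $ \func{(\Ruelle{X}{\sigma_{i}}{\varphi_{i}})^{\ast}}{\mu} = \mu $. Since $ e^{c_{X,\sigma,\varphi}} $ is continuous and strictly positive, the displayed identity is exactly the assertion that $ r^{\ast}\mu $ and $ s^{\ast}\mu $ are mutually absolutely continuous with $ \d r^{\ast}\mu / \d s^{\ast}\mu = e^{c_{X,\sigma,\varphi}} $, i.e.\ condition (1). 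The measures $ r^{\ast}\mu $ and $ s^{\ast}\mu $ are genuinely Radon because Lemma \ref{A Uniform Upper Bound for the Pre-Images of Points under Local Homeomorphisms on a Compact Metrizable Space} bounds the fibre cardinalities, so the sums defining them are finite and locally bounded; and since $ \DRG{X}{\sigma} $ is second countable it is covered by countably many basic open bisections $ \ZZintfor{U,V,m,n} $, so by a partition of unity it suffices to test the identity on functions supported in a single such bisection.

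The core computation I would carry out is on the generating slices. For $ i \in \SqBr{k} $ put $ S_{i} \df \SSet{\Trip{w}{\mathbf{e}_{i}}{\func{\sigma_{i}}{w}} : w \in X} $, an open bisection on which $ r $ and $ s $ restrict to homeomorphisms onto $ X $; I identify $ S_{i} $ with $ X $ through $ w \mapsto \Trip{w}{\mathbf{e}_{i}}{\func{\sigma_{i}}{w}} $ and write a function supported on $ S_{i} $ as $ g \in \Cont{X,\R} $. A direct unwinding of the definitions in Definition \ref{Quasi-Invariance} then gives
\[
\int_{\DRG{X}{\sigma}} g \; \d r^{\ast}\mu = \int_{X} g \; \d\mu, \qquad
\int_{\DRG{X}{\sigma}} g \, e^{c_{X,\sigma,\varphi}} \; \d s^{\ast}\mu = \int_{X} \func{\Ruelle{X}{\sigma_{i}}{\varphi_{i}}}{g} \; \d\mu,
\]
where the second equality uses $ \func{c_{X,\sigma,\varphi}}{w,\mathbf{e}_{i},\func{\sigma_{i}}{w}} = \func{\varphi_{i}}{w} $ and the fact that summing over the source fibre $ \InvIm{\sigma_{i}}{\SSet{x}} $ reproduces the Ruelle operator. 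Hence the integral identity restricted to $ S_{i} $ is precisely $ \int_{X} g \, \d\mu = \int_{X} \func{\Ruelle{X}{\sigma_{i}}{\varphi_{i}}}{g} \, \d\mu $ for all $ g $, that is, $ \func{(\Ruelle{X}{\sigma_{i}}{\varphi_{i}})^{\ast}}{\mu} = \mu $.

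This already yields $ (1) \Rightarrow (2) $ by restricting the global identity to each slice $ S_{i} $. For $ (2) \Rightarrow (1) $ I would establish the general principle that verifying the integral identity on generating bisections suffices to verify it on every basic bisection, hence globally: the identity \emph{respects composition} of bisections because $ e^{c_{X,\sigma,\varphi}} $ is multiplicative along composable pairs (as $ c_{X,\sigma,\varphi} $ is a cocycle) and Radon--Nikodym derivatives of the associated partial homeomorphisms multiply, and it respects inversion since $ \func{c_{X,\sigma,\varphi}}{\gamma^{-1}} = - \func{c_{X,\sigma,\varphi}}{\gamma} $. Every arrow factors as $ \Trip{x}{m}{\func{\sigma^{m}}{x}} \cdot \Trip{y}{n}{\func{\sigma^{n}}{y}}^{-1} $ with $ \func{\sigma^{m}}{x} = \func{\sigma^{n}}{y} $, and each positive arrow is a product of length-one positive arrows since $ \sigma^{m} = \sigma_{1}^{m_{1}} \cdots \sigma_{k}^{m_{k}} $, so the slices $ S_{i} $ and their inverses generate $ \DRG{X}{\sigma} $. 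Starting from (2), Theorem \ref{The Workhorse Lemma}(1) shows $ n \mapsto \Ruelle{X}{\sigma^{n}}{\func{c_{\varphi}}{n}} $ is a semigroup homomorphism, so dualizing gives $ \func{(\Ruelle{X}{\sigma^{n}}{\func{c_{\varphi}}{n}})^{\ast}}{\mu} = \mu $ for every $ n \in \N^{k} $; this is the summed-over-fibre form of the identity on the positive bisection determined by $ \sigma^{m} $, and propagating through the factorization above delivers the identity on an arbitrary $ \ZZintfor{U,V,m,n} $.

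The main obstacle I anticipate is the measure-theoretic globalization: passing from ``the identity holds on each bisection of a generating cover'' to the single global statement $ \d r^{\ast}\mu / \d s^{\ast}\mu = e^{c_{X,\sigma,\varphi}} $ on all of $ \DRG{X}{\sigma} $. This requires checking that the locally defined Radon--Nikodym data patch consistently on overlaps --- which they do, because $ c_{X,\sigma,\varphi} $ is a single globally defined continuous cocycle --- together with a careful partition-of-unity (or monotone class) argument controlling the countable sums, using Lemma \ref{A Uniform Upper Bound for the Pre-Images of Points under Local Homeomorphisms on a Compact Metrizable Space} for uniform finiteness of the fibres. This is precisely the step that extends Renault's rank-one argument in \cite[Proposition 4.2]{Re1}; the genuinely new content is organizing the reduction along the $ k $ commuting directions, for which the cocycle factorization $ \func{c_{X,\sigma,\varphi}}{x,m-n,y} = \FUNC{\func{c_{\varphi}}{m}}{x} - \FUNC{\func{c_{\varphi}}{n}}{y} $ and the homomorphism property of Theorem \ref{The Workhorse Lemma} are the essential tools.
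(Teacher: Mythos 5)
Your proposal is correct, and for the harder direction it takes a genuinely different route from the paper. For $\Br{1} \Longrightarrow \Br{2}$ the paper computes nothing: it restricts $\mu$ to the rank-one subgroupoid $\DRG{X}{\sigma_{i}}$ and quotes Renault's Proposition 4.2 of \cite{Re1}, whereas you reach the same conclusion by the direct computation on the slice $S_{i}$ --- which is, in fact, exactly the computation the paper performs later inside its proof of $\Br{2} \Longrightarrow \Br{1}$. For $\Br{2} \Longrightarrow \Br{1}$ the paper proceeds in two separate stages: it first proves quasi-invariance of $\mu$ by hand (push-forward absolute continuity in both directions, the uniform fibre bound of Lemma \ref{A Uniform Upper Bound for the Pre-Images of Points under Local Homeomorphisms on a Compact Metrizable Space}, and a monotonicity argument over $m,n \in \N^{k}$), then identifies the density with $e^{c_{X,\sigma,\varphi}}$ on each $S_{i}$ and closes by citing \cite{Re}, Proposition I.3.3, so that the measurable Radon--Nikodym derivative $D$ is a.e.\ a cocycle and, agreeing a.e.\ with the continuous cocycle $e^{c_{X,\sigma,\varphi}}$ on the generating slices, agrees with it a.e.\ globally. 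You instead verify the single identity $r^{\ast}\mu = e^{c_{X,\sigma,\varphi}} s^{\ast}\mu$ directly on every basic set, with quasi-invariance falling out for free: dualizing the homomorphism of Theorem \ref{The Workhorse Lemma} gives $\func{\Br{\Ruelle{X}{\sigma^{n}}{\func{c_{\varphi}}{n}}}^{\ast}}{\mu} = \mu$ for all $n$, and your factorization step does go through --- for $F$ supported on $\ZZintfor{U,V,m,n}$ with $\sigma^{m}|_{U}$, $\sigma^{n}|_{V}$ injective, the functions $H \df F \circ \Br{r|}^{-1}$ on $U$ and $G \df \Br{e^{c_{X,\sigma,\varphi}} F} \circ \Br{s|}^{-1}$ on $V$ satisfy $\func{\Ruelle{X}{\sigma^{m}}{\func{c_{\varphi}}{m}}}{H} = \func{\Ruelle{X}{\sigma^{n}}{\func{c_{\varphi}}{n}}}{G}$ pointwise by the cocycle identity $\func{c_{X,\sigma,\varphi}}{x,m-n,y} = \FUNC{\func{c_{\varphi}}{m}}{x} - \FUNC{\func{c_{\varphi}}{n}}{y}$, and integrating against $\mu$ yields the identity on that bisection. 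What each approach buys: the paper's proof is shorter because it outsources both the rank-one case and the a.e.-cocycle globalization to Renault, while yours is self-contained and constructive at the cost of the partition-of-unity bookkeeping you correctly flag as the main labor. One slip to repair: $S_{i}$ and the positive slices for $\sigma^{n}$ are \emph{not} bisections --- $r$ restricts to a homeomorphism onto $X$, but $s$ restricts to the many-to-one local homeomorphism $\sigma_{i}$, which is precisely why summing over source fibres produces the Ruelle operator in your own computation; the honest bisections are the small sets $\ZZintfor{U,V,m,n}$.
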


\begin{proof}
Assume that (1) holds and  fix $ i \in \SqBr{k} $; then  $ \mu $ is quasi-invariant for the subgroupoid
$$
\DRG{X}{\sigma_{i}} \df \Set{\Trip{x}{\ell \mathbf{e}_{i}}{y}}{x,y \in X, ~ m,n \in \N, ~ \ell = m - n, ~ \func{\sigma_{i}^{m}}{x} = \func{\sigma_{i}^{n}}{y}}
$$
of $ \DRG{X}{\sigma} $ determined by the singly generated system $ \Pair{X}{\sigma_{i}} $. Thus, by Proposition 4.2 of \cite{Re1}, and using its notation, we have that
${}^{t} \func{\mathcal{L}_{\varphi_i}}{\mu} = \mu,$
which  in our notation means  $ \func{\Br{\Ruelle{X}{\sigma_{i}}{\varphi_{i}}}^{\ast}}{\mu} = \mu $. So $ \Br{1} \Longrightarrow \Br{2} $.

Conversely, assume (2) holds. We first briefly explain why $ \mu $ is quasi-invariant for $ \DRG{X}{\sigma} $. A straightforward calculation shows that for all $ i \in \SqBr{k} $, $ f \in \Cont{X,\R} $, and $ x \in X $, we have
\begin{gather*}
  \Int{X}{f}{\Br{\Br{\sigma_{i}}_{\ast} \mu}}
= \Int{X}{f  \func{\Ruelle{X}{\sigma_{i}}{\varphi_{i}}}{1_{X}}}{\mu}.
\end{gather*}
The Riesz representation theorem then implies  that for all $ A \in \Borel{X} $,
$$
\Func{\Br{\sigma_{i}}_{\ast} \mu}{A} = \Int{X}{1_{A}  \func{\Ruelle{X}{\sigma_{i}}{\varphi_{i}}}{1_{X}}}{\mu},
$$
which yields $ \Br{\sigma_{i}}_{\ast} \mu = \mu \circ \sigma_{i}^{- 1} \ll \mu $ for $ i \in \SqBr{k} $. On the other hand,  $ \mu \ll \mu \circ \sigma_{i}^{- 1}  $ for $ i \in \SqBr{k} $ by Proposition 4.2 of \cite{Re1}. By Lemma \ref{A Uniform Upper Bound for the Pre-Images of Points under Local Homeomorphisms on a Compact Metrizable Space}, we now  get that  for all $ B \in \Borel{\DRG{X}{\sigma}} $ and some $N\in \N_{>0}$,
$$
     \func{\mu}{\Im{s}{B}}
\leq \Func{s^{\ast} \mu}{B}
\leq N \func{\mu}{\Im{s}{B}},
$$
which implies that $ \func{\mu}{\Im{s}{B}} = 0 $ if and only if $ \Func{s^{\ast} \mu}{B} = 0 $; using a similar technique, we can also prove that $ \func{\mu}{\Im{r}{B}} = 0 $ if and only if $ \Func{r^{\ast} \mu}{B} = 0 $. Finally, by the monotonicity and countable additivity of $ \mu $, we have for all $ B \in \Borel{\DRG{X}{\sigma}} $, that:
$$
     \func{\mu}{\Im{s}{B}}
\leq \func{\mu}{\bigcup_{m,n \in \N^{k}} \Im{\Br{\sigma^{m}}^{- 1}}{\Im{\sigma^{n}}{\Im{r}{B}}}}
=    0,
$$
so that if $ \func{\mu}{\Im{r}{B}} = 0 $, then $ \func{\mu}{\Im{s}{B}} = 0 $, and the same method shows that if $ \func{\mu}{\Im{s}{B}} = 0 $, then $ \func{\mu}{\Im{r}{B}} = 0 $. All of these facts taken together imply that for a fixed $ B \in \Borel{\DRG{X}{\sigma}} $, we have $ \Func{r^{\ast} \mu}{B} = 0 $ if and only if $ \Func{s^{\ast} \mu}{B} = 0 $, so $ r^{\ast} \mu $ and $ s^{\ast} \mu $ are equivalent Borel measures on $ \DRG{X}{\sigma} $. Therefore, $ \mu $ is quasi-invariant for $ \DRG{X}{\sigma} $.

Now  by  (2), we have for all $ f \in \Cont{X,\R} $ and every $ i \in \SqBr{k} ,$ that:
\begin{align*}
		\Int{X}{f}{\mu}
& = \Int{X}{f}{\func{\Br{\Ruelle{X}{\sigma_{i}}{\varphi_{i}}}^{\ast}}{\mu}} = \Int{X}{\func{\Ruelle{X}{\sigma_{i}}{\varphi_{i}}}{f}}{\mu} \\
& = \IInt{X}{\Br{\sum_{y \in \InvIm{\sigma_{i}}{\SSet{x}}} e^{\func{\varphi_{i}}{y}} \func{f}{y}}}{\mu}{x} \\
& = \IInt{X}{\Br{\sum_{\gamma \in \DRG{X}{\sigma}_{x}} e^{\func{\varphi_{i}}{\func{r}{\gamma}}} \func{f}{\func{r}{\gamma}} \func{1_{S_{i}}}{\gamma}}}{\mu}{x} \\
& = \IInt{\DRG{X}{\sigma}}{e^{\func{\varphi_{i}}{\func{r}{\gamma}}} \func{f}{\func{r}{\gamma}} \func{1_{S_{i}}}{\gamma}}{\Br{s^{\ast} \mu}}{\gamma} \\
& = \IInt{\DRG{X}{\sigma}}{e^{\func{c_{X,\sigma,\varphi}}{\gamma}} \func{f}{\func{r}{\gamma}} \func{1_{S_{i}}}{\gamma}}{\Br{s^{\ast} \mu}}{\gamma} \\
& = \IInt{\DRG{X}{\sigma}}{\func{f}{\func{r}{\gamma}} \func{1_{S_{i}}}{\gamma}  e^{\func{c_{X,\sigma,\varphi}}{\gamma}}}{\Br{s^{\ast} \mu}}{\gamma}.
\end{align*}
From the above equations  it follows that
the  Radon-Nikodym derivative of $ r^{\ast} \mu $ with respect to $ s^{\ast} \mu $ must be equal to  $  e^{\func{c_{X,\sigma,\varphi}}{\gamma}} $ for almost all $ \gamma \in S_{i} $. By \cite{Re}, Proposition I.3.3, $ D $ is a measurable $ \R\setminus \{0\} $-valued $ 1 $-cocycle on $ \DRG{X}{\sigma},$ and $ D =_{\ae} e^{c_{X,\sigma,\varphi}} $, which is continuous by assumption. Therefore, $ e^{c_{X,\sigma,\varphi}} $ is a continuous Radon-Nikodym cocycle associated to the quasi-invariant measure $ \mu $, and we have established $ \Br{2} \Longrightarrow \Br{1} $.
\end{proof}

%---------------------------------------------------------------------------------------------------------------------------------------------

We now  illustrate a particular problem of existence of KMS states arising in the context of $ \DRG{X}{\sigma}$.  In the following example there are no KMS states associated to the dynamics, even though one of the associated local homeomorphisms acting on $X$ is expansive and exact. \\

%---------------------------------------------------------------------------------------------------------------------------------------------
	
\begin{Eg} \label{ex:tensor-product-Cuntz-algebra}
	Recall that the Cuntz algebra $ \mathcal{O}_{N} $, where $ N \geq 2 $,  is  the $ C^{\ast} $-algebra associated to the groupoid  arising from the  action of  the standard shift $ \sigma_{N} $ on $ X_{N} \df \prod_{j \in \N} \SqBr{N} $.
In \cite{OP}, D. Olesen and G. Pedersen prove that  for $N \geq 2$ there is exactly one KMS state for $ \mathcal{O}_{N} $ with respect to the canonical gauge action $ \alpha_{N} $ of $ \R $ on $ \mathcal{O}_{N} $ associated  to the cocycle determined by $ \varphi_{N} = 1 $. This KMS state arises at the inverse temperature value $ \beta = \func{\ln}{N} $. 

Now take $ X = X_{2} \times X_{3} $, and define $ \sigma = \Pair{\sigma_{2}}{\sigma_{3}} $, where $ \sigma_{j} $ is the standard shift on $ X_{j} $; also set $ \varphi = \Pair{\varphi_{2}}{\varphi_{3}} $, with $\varphi_{2} = 1$ and $\varphi_{3} = 1$. Note that the shift corresponding to $ \Pair{1}{1} \in \N^{2} $ is expansive and exact. Consider the automorphism group $ \alpha = \alpha_{2} \otimes \alpha_{3} $ defined on  the $ C^{\ast} $-algebra corresponding to $ \Trip{X}{\sigma}{\varphi} $, which is the tensor product $ \mathcal{O}_{2} \otimes \mathcal{O}_{3} $ of the $ C^{\ast} $-algebras  $ \mathcal{O}_{2}$ and  $\mathcal{O}_{3} $. Suppose that for some $ \beta \in \R $, there is a state $ \omega $ on this tensor product $ C^{\ast} $-algebra that satisfies the KMS condition for the automorphism group $ \alpha $. Then by the Olesen-Pedersen result, $ \omega $ restricted to the $ C^{\ast} $-subalgebra $ \mathcal{O}_{2} \otimes \C \Id_{\mathcal{O}_{3}} $ satisfies the KMS condition for $ \alpha $ only at $ \beta = \func{\ln}{2} $, whereas $ \omega $ restricted to the subalgebra $ \C \Id_{\mathcal{O}_{2}} \otimes \mathcal{O}_{3} $ satisfies the KMS condition for $ \alpha $ only at $ \beta = \func{\ln}{3} $. Therefore, by  the aforementioned result of Olsen and Petersen \cite{OP}, there cannot be any KMS states for the $ C^{\ast} $-algebra $ \mathcal{O}_{2} \otimes \mathcal{O}_{3} $ associated to $ \Trip{X}{\sigma}{\varphi} $ for the automorphism group $ \alpha = \alpha_{1} \otimes \alpha_{2} $.
\end{Eg}

% ---------------------------------------------------------------------------------------------------------------------------------------------

We are now in a position to introduce the generalized gauge dynamics of a $ k $-Ruelle dynamical system, which satisfies the conditions of Definition \ref{RPF Ruelle Dynamical System}. \\

% ---------------------------------------------------------------------------------------------------------------------------------------------

\begin{Def}[Generalized Gauge Dynamics] \label{Generalized Gauge Dynamics of an RPF Ruelle Dynamical System}
The \emph{generalized gauge dynamics} of a $ k $-Ruelle  dynamical system $ \Trip{X}{\sigma}{\varphi} $, which satisfies the conditions of Definition \ref{RPF Ruelle Dynamical System}, is by definition  the $ \R $-dynamical system $ \Pair{\Cstar{\DRG{X}{\sigma}}}{\alpha^{X,\sigma,\varphi}} $ defined by:
$$
\Func{\func{\alpha^{X,\sigma,\varphi}_{t}}{f}}{\gamma} \df e^{i t \func{c_{X,\sigma,\varphi}}{\gamma}} \func{f}{\gamma},
$$
for all $ f \in \Cc{\DRG{X}{\sigma}} $, $ \gamma \in {\DRG{X}{\sigma}} $ and $ t \in \R $, where here we are implicitly using the canonical embedding of $ \Cc{\DRG{X}{\sigma}} $ into $ \CstarRed{\DRG{X}{\sigma}} $.
\end{Def}

The following result may be found in \cite{Re}; see also the discussion preceding Proposition 3.2 of \cite{KR}. \\

% ---------------------------------------------------------------------------------------------------------------------------------------------

\begin{Prop}[\cite{Re}] \label{Constructing KMS States from Quasi-Invariant Measures}
Let $ \Trip{X}{\sigma}{\varphi} $ be a $ k $-Ruelle dynamical system that satisfies the conditions of Definition \ref{RPF Ruelle Dynamical System}, and let $ \beta \in \R $. Then for every quasi-invariant measure $ \mu $ for $ \Pair{X}{\sigma} $ with  continuous Radon-Nikodym derivative $ e^{- \beta c_{X,\sigma,\varphi}} $, there exists a $ \KMS{\beta} $-state $ \omega $ for the generalized gauge dynamics of $ \Trip{X}{\sigma}{\varphi} $ that is uniquely determined by:
$$
\func{\omega}{f} = \Int{X}{\func{f}{x,0,x}}{\mu},
$$
for all $ f \in \Cc{\DRG{X}{\sigma}} $.
\end{Prop}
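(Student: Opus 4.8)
The plan is to realize $\omega$ as the composition $\mu \circ E$, where $E : \Cstar{\DRG{X}{\sigma}} \to \Cont{X}$ is the canonical conditional expectation extending the restriction map $f \mapsto \func{f}{\cdot,0,\cdot}$ to the (clopen, compact) unit space, and then to verify the KMS condition directly on the norm-dense $\ast$-subalgebra $\Cc{\DRG{X}{\sigma}}$. First I would note that $E$ is positive and unital (since $1_{X} \in \Cc{\DRG{X}{\sigma}}$ is the identity of $\Cstar{\DRG{X}{\sigma}}$) and that $\mu$ is a probability measure, so $\omega = \mu \circ E$ is automatically a state with $\func{\omega}{f} = \Int{X}{\func{f}{x,0,x}}{\mu}$ on $\Cc{\DRG{X}{\sigma}}$; positivity can also be seen directly from $\Func{f^{\ast} \ast f}{x} = \sum_{\gamma \in \DRG{X}{\sigma}_{x}} \Abs{\func{f}{\gamma}}^{2} \geq 0$. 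Because $\Cc{\DRG{X}{\sigma}}$ is dense, this prescription determines $\omega$ uniquely, which settles the uniqueness assertion at once.

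Next I would check that every $f \in \Cc{\DRG{X}{\sigma}}$ is analytic for the dynamics. Since $c_{X,\sigma,\varphi}$ is continuous and hence bounded on the compact set $\Supp{f}$, the assignment $z \mapsto \bigl(\gamma \mapsto e^{i z \func{c_{X,\sigma,\varphi}}{\gamma}} \func{f}{\gamma}\bigr)$ defines an entire $\Cc{\DRG{X}{\sigma}}$-valued function whose power series converges in the $I$-norm, and therefore in $C^{\ast}$-norm; thus $\Cc{\DRG{X}{\sigma}}$ is an $\alpha^{X,\sigma,\varphi}$-invariant core of analytic elements with $\Func{\func{\alpha^{X,\sigma,\varphi}_{i\beta}}{f}}{\gamma} = e^{- \beta \func{c_{X,\sigma,\varphi}}{\gamma}} \func{f}{\gamma}$. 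It then suffices to verify, for all $f,g \in \Cc{\DRG{X}{\sigma}}$, the identity $\func{\omega}{f \ast g} = \func{\omega}{g \ast \func{\alpha^{X,\sigma,\varphi}_{i\beta}}{f}}$.

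The core computation translates both sides into integrals over $\DRG{X}{\sigma}$. Evaluating the convolution on a unit gives $\Func{f \ast g}{x} = \sum_{\gamma \in \DRG{X}{\sigma}^{x}} \func{f}{\gamma} \func{g}{\gamma^{-1}}$, so the definition of $r^{\ast}\mu$ in Definition~\ref{Quasi-Invariance} yields $\func{\omega}{f \ast g} = \Int{\DRG{X}{\sigma}}{\func{f}{\gamma} \func{g}{\gamma^{-1}}}{(r^{\ast}\mu)}$. Running the same formula for $g \ast \func{\alpha^{X,\sigma,\varphi}_{i\beta}}{f}$, using the exponential above together with the cocycle identity $\func{c_{X,\sigma,\varphi}}{\gamma^{-1}} = - \func{c_{X,\sigma,\varphi}}{\gamma}$, gives $\func{\omega}{g \ast \func{\alpha^{X,\sigma,\varphi}_{i\beta}}{f}} = \Int{\DRG{X}{\sigma}}{\func{g}{\gamma} e^{\beta \func{c_{X,\sigma,\varphi}}{\gamma}} \func{f}{\gamma^{-1}}}{(r^{\ast}\mu)}$. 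Applying the change of variables $\gamma \mapsto \gamma^{-1}$, which pushes $r^{\ast}\mu$ forward to $s^{\ast}\mu$ because inversion interchanges the range and source fibrations, rewrites this as $\Int{\DRG{X}{\sigma}}{\func{f}{\gamma} \func{g}{\gamma^{-1}} e^{- \beta \func{c_{X,\sigma,\varphi}}{\gamma}}}{(s^{\ast}\mu)}$. Finally I would invoke the hypothesis that $\mu$ is quasi-invariant with $\frac{\d r^{\ast}\mu}{\d s^{\ast}\mu} = e^{- \beta c_{X,\sigma,\varphi}}$ to convert the integral against $r^{\ast}\mu$ in the expression for $\func{\omega}{f \ast g}$ into an integral against $e^{- \beta c_{X,\sigma,\varphi}} \, \d s^{\ast}\mu$, which matches the expression just obtained, establishing the KMS identity.

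The routine parts (positivity, density, and the analytic-element estimate) are standard for étale groupoids. The hard part will be the measure-theoretic bookkeeping in the third step: justifying the interchange of the fiberwise sums with integration against $\mu$, the Fubini-type manipulations defining $r^{\ast}\mu$ and $s^{\ast}\mu$ on compactly supported continuous integrands, and especially the inversion change of variables carrying $r^{\ast}\mu$ to $s^{\ast}\mu$. Getting the three signs to align (the one from $\func{\alpha^{X,\sigma,\varphi}_{i\beta}}{f}$, the one from $c_{X,\sigma,\varphi}$ on inverses, and the one in the prescribed Radon--Nikodym derivative) is precisely where care is needed, and it is their alignment that forces the exponent $e^{- \beta c_{X,\sigma,\varphi}}$ in the statement. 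Since the single-homeomorphism version of this bookkeeping is carried out in \cite{Re} and the discussion preceding Proposition~3.2 of \cite{KR}, I would model the argument on Renault's and appeal to those references for the underlying measure-theoretic lemmas.
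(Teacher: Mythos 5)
Your proposal is correct, and it is essentially the argument the paper itself relies on: the paper offers no proof of this proposition, instead citing Renault \cite{Re} and the discussion preceding Proposition 3.2 of \cite{KR}, whose standard proof is exactly your route --- realize $\omega$ as $\mu$ composed with the canonical conditional expectation onto $\Cont{X}$, note that $\Cc{\DRG{X}{\sigma}}$ is a dense $\alpha$-invariant $\ast$-subalgebra of analytic elements with $\func{\alpha_{i\beta}}{f} = e^{-\beta c_{X,\sigma,\varphi}} f$ pointwise, and verify $\func{\omega}{f \ast g} = \func{\omega}{g \ast \func{\alpha_{i\beta}}{f}}$ by rewriting both sides as integrals against $r^{\ast}\mu$ and $s^{\ast}\mu$, using inversion and the prescribed Radon--Nikodym derivative $e^{-\beta c_{X,\sigma,\varphi}}$. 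Your sign bookkeeping (the $e^{\beta c}$ from $\func{\alpha_{i\beta}}{f}$ evaluated at $\gamma^{-1}$, the inversion pushing $r^{\ast}\mu$ to $s^{\ast}\mu$, and the hypothesis on $\frac{\d r^{\ast}\mu}{\d s^{\ast}\mu}$) aligns correctly, so no gap.
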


% ---------------------------------------------------------------------------------------------------------------------------------------------

It is not necessarily the case that every $ \KMS{\beta} $-state for the generalized gauge dynamics of a $ k $-Ruelle  dynamical system, which satisfies the conditions of Definition \ref{RPF Ruelle Dynamical System}, $ \Trip{X}{\sigma}{\varphi} $ originates from a quasi-invariant measure for $ \Pair{X}{\sigma} $ with $ e^{- \beta c_{X,\sigma,\varphi}} $ as a continuous Radon-Nikodym derivative, as  described above. However, A. Kumjian and J. Renault showed in \cite[ Proposition 3.2]{KR} that this is indeed the case if $ \InvIm{c_{X,\sigma,\varphi}}{\SSet{0}} $ is a principal sub-groupoid of $ \DRG{X}{\sigma} $.

Using  Proposition \ref{Constructing KMS States from Quasi-Invariant Measures}, we can now prove the following result. \\

% ---------------------------------------------------------------------------------------------------------------------------------------------

\begin{Thm} \label{thm:suff-cond-existence-KMS-states}
Let $ \Trip{X}{\sigma}{\varphi} $ be a $k$--Ruelle dynamical system, which  has a unique eigenmeasure as in Definition \ref{RPF Ruelle Dynamical System}, and $ \beta \in \R \setminus \SSet{0} $ be such that $ \func{\Br{\Ruelle{X}{\sigma_{i}}{\beta \varphi_{i}}}^{\ast}}{\mu} = \mu $ for each $ i \in \SqBr{k} $, where $ \mu $ is an eigenmeasure for the corresponding Ruelle operator with eigenvalue $ 1 $, see Definition \ref{Ruelle Triple}(2). Then there exists a $ \KMS{\beta} $ state as in Proposition \ref{Constructing KMS States from Quasi-Invariant Measures}   for the generalized gauge dynamics corresponding to $ \Trip{X}{\sigma}{\beta \varphi} $.
\end{Thm}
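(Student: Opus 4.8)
The plan is to run the rescaled system $\Trip{X}{\sigma}{\beta\varphi}$ through the Radon--Nikodym characterization of Theorem~\ref{A Characterization of Solutions of the Radon-Nikodym Problem} and then through the state-construction of Proposition~\ref{Constructing KMS States from Quasi-Invariant Measures}. First I would record that $\beta\varphi = \Seq{\beta\varphi_i}{i \in \SqBr{k}}$ again satisfies the cocycle condition of Definition~\ref{The Cocycle Condition}: Equation~\eqref{eq:the cocycle condition} is linear in $\varphi$, so multiplying it through by $\beta$ preserves it, and hence $\Trip{X}{\sigma}{\beta\varphi}$ is itself a $k$-Ruelle dynamical system. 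Moreover, since the explicit cocycle formula of Theorem~\ref{prop:explicit description fo cocycles associated to cocycle condition} is $\R$-linear in its generating tuple (the $\N^{k}$-action is by composition with $\sigma^{n}$, which commutes with real scalar multiplication on $\Cont{X,\R}$), the associated groupoid cocycle scales accordingly, $c_{X,\sigma,\beta\varphi} = \beta\, c_{X,\sigma,\varphi}$; this is the map $\Phi$ of Corollary~\ref{prop: Alex semigroup cocycles correspondence}. I would also note that the pair $\Pair{\mathbf{1}}{\mu}$, with $\mathbf{1} = \Seq{1}{i \in \SqBr{k}}$, furnishes the eigendata required by Definition~\ref{RPF Ruelle Dynamical System} for $\Trip{X}{\sigma}{\beta\varphi}$, so the cited results apply to this system (the directions we invoke use only this eigenmeasure identity, not uniqueness).

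Next I would feed the eigenmeasure hypothesis into Theorem~\ref{A Characterization of Solutions of the Radon-Nikodym Problem} for the system $\Trip{X}{\sigma}{\beta\varphi}$. The assumption $\func{\Br{\Ruelle{X}{\sigma_{i}}{\beta\varphi_{i}}}^{\ast}}{\mu} = \mu$ for every $i \in \SqBr{k}$ is exactly statement~(2) of that theorem, so statement~(1) follows: $\mu$ is quasi-invariant for $\DRG{X}{\sigma}$, with continuous Radon--Nikodym derivative
\[
\frac{\d r^{\ast}\mu}{\d s^{\ast}\mu} = e^{c_{X,\sigma,\beta\varphi}} = e^{\beta\, c_{X,\sigma,\varphi}} .
\]
Read in the reciprocal direction, this says $\frac{\d s^{\ast}\mu}{\d r^{\ast}\mu} = e^{-\beta\, c_{X,\sigma,\varphi}}$.

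Finally I would hand this quasi-invariant measure to Proposition~\ref{Constructing KMS States from Quasi-Invariant Measures}, which converts a quasi-invariant measure whose Radon--Nikodym derivative has the shape $e^{-\beta\, c_{X,\sigma,\varphi}}$ into a $\KMS{\beta}$-state for the generalized gauge dynamics, given concretely by $\func{\omega}{f} = \Int{X}{\func{f}{x,0,x}}{\mu}$ for $f \in \Cc{\DRG{X}{\sigma}}$. The derivative computed above has precisely this form, so the hypotheses are met and the state $\omega$ exists; here I would use that, because the dynamics attached to $\Trip{X}{\sigma}{\beta\varphi}$ carries the cocycle $c_{X,\sigma,\beta\varphi} = \beta\, c_{X,\sigma,\varphi}$, the $\KMS{\beta}$ condition for the $\varphi$-dynamics and the normalization asserted in the statement refer to the same equilibrium state.

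The step I expect to be the main obstacle is not any single calculation but the careful bookkeeping of the inverse temperature together with the sign convention for the Radon--Nikodym cocycle. One must stay vigilant about whether the relevant derivative is $\frac{\d r^{\ast}\mu}{\d s^{\ast}\mu}$ or its reciprocal $\frac{\d s^{\ast}\mu}{\d r^{\ast}\mu}$, and about how the scaling $c_{X,\sigma,\beta\varphi} = \beta\, c_{X,\sigma,\varphi}$ meshes with the factor $e^{-\beta(\cdot)}$ demanded by Proposition~\ref{Constructing KMS States from Quasi-Invariant Measures}. Keeping these conventions aligned --- and using $\beta \neq 0$ to pass freely between $c_{X,\sigma,\varphi}$ and its rescaling --- is exactly what guarantees that the constructed state sits at the inverse temperature named in the statement rather than at a rescaled one.
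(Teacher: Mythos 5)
Your route is the paper's own: the paper offers no separate argument for this theorem, presenting it as the immediate combination of Theorem \ref{A Characterization of Solutions of the Radon-Nikodym Problem}, applied to the rescaled system $ \Trip{X}{\sigma}{\beta \varphi} $, with Proposition \ref{Constructing KMS States from Quasi-Invariant Measures}; and your preliminary bookkeeping --- that $ \beta \varphi $ satisfies the cocycle condition, that $ c_{X,\sigma,\beta \varphi} = \beta\, c_{X,\sigma,\varphi} $ by linearity of the correspondence of Theorem \ref{prop:explicit description fo cocycles associated to cocycle condition}, and that the implication $ (2) \Rightarrow (1) $ of Theorem \ref{A Characterization of Solutions of the Radon-Nikodym Problem} uses only the eigenmeasure identity and not the uniqueness clause of Definition \ref{RPF Ruelle Dynamical System} --- is all sound.

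The gap is your last step. Definition \ref{Quasi-Invariance} fixes the convention that \emph{the} Radon--Nikodym derivative of a quasi-invariant measure is $ \frac{\d r^{\ast} \mu}{\d s^{\ast} \mu} $, and Proposition \ref{Constructing KMS States from Quasi-Invariant Measures} requires this function to equal $ e^{- \beta c_{X,\sigma,\varphi}} $ for the cocycle generating the dynamics in question. What Theorem \ref{A Characterization of Solutions of the Radon-Nikodym Problem} hands you is $ \frac{\d r^{\ast} \mu}{\d s^{\ast} \mu} = e^{+ c_{X,\sigma,\beta \varphi}} = e^{+ \beta c_{X,\sigma,\varphi}} $. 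Reading this ``in the reciprocal direction'' produces a statement about the different function $ \frac{\d s^{\ast} \mu}{\d r^{\ast} \mu} $; it does not convert the computed derivative into the one the proposition demands, so your assertion that ``the derivative computed above has precisely this form'' is false under the stated conventions. Taken literally, your computation exhibits $ \frac{\d r^{\ast} \mu}{\d s^{\ast} \mu} = e^{- (-1)\, c_{X,\sigma,\beta \varphi}} $, i.e., it yields a $ \KMS{- 1} $ state for the dynamics of $ \Trip{X}{\sigma}{\beta \varphi} $ (equivalently, a $ \KMS{\beta} $ state for the dynamics of $ \Trip{X}{\sigma}{- \varphi} $, or a $ \KMS{- \beta} $ state for the dynamics of $ \Trip{X}{\sigma}{\varphi} $), not at face value a $ \KMS{\beta} $ state for the dynamics named in the statement. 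The compensating minus sign between the dual-operator hypothesis and the cocycle generating the dynamics cannot be waved away; it is exactly what the paper inserts in its actual applications, Corollaries \ref{Existence of KMS States for Generalized Gauge Dynamics of RPF Ruelle Dynamical Systems} and \ref{cor:Existence of KMS States for Generalized Gauge Dynamics of RPF Ruelle Dynamical Systems}, where the eigenmeasure condition is imposed for $ - \varsigma $ (respectively $ - \beta \varsigma $) while the KMS state obtained is for the dynamics of $ + \varsigma $. Your closing paragraph correctly identifies this sign alignment as the crux, but announcing that the conventions align is not the same as making them align: to land at the advertised inverse temperature you must run the argument with the eigenmeasure hypothesis read for $ - \beta \varphi_{i} $ (or carry the sign through Proposition \ref{Constructing KMS States from Quasi-Invariant Measures} as the corollaries do), and as written your proof does neither.
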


% ---------------------------------------------------------------------------------------------------------------------------------------------

Even if the $ k $-Ruelle dynamical system does not satisfy the conditions of Definition \ref{RPF Ruelle Dynamical System} so that there does not exist an eigenmeasure for the dual of the Ruelle operator with eigenvalue $ 1 $, we can sometimes modify the $ 1 $-cocycle $ \varphi $ to obtain a new $ 1 $-cocycle $ {\varsigma} $ that does satisfy those hypotheses. The next result was motivated by \cite[Proposition 4.4]{FGLP}, which was in turn based on \cite[Remark 5.25 and Proposition 5.8]{McN}. \\

% -------------------------------------------------------------------------------------------------------------------------

\begin{Cor} \label{Existence of KMS States for Generalized Gauge Dynamics of RPF Ruelle Dynamical Systems}
Let $ \Trip{X}{\sigma}{\varphi} $ be a $ k $-Ruelle dynamical system satisfying the conditions in Definition \ref{RPF Ruelle Dynamical System}. For $ i \in \SqBr{k} $ and $ x \in X $, define
$$
\func{\varsigma_{i}}{x} \df \func{\ln}{\lambda^{X,\sigma,\varphi}_{i}} - \func{\varphi_{i}}{x}
\qquad \text{and} \qquad
\varsigma \df \Seq{\varsigma_{i}}{i \in \SqBr{k}}.
$$
Then $ \Trip{X}{\sigma}{\varsigma} $ is a $ k $-Ruelle dynamical system  and $ \mu^{X,\sigma,\varsigma} $ is a quasi-invariant measure for $ \Pair{X}{\sigma} $, with continuous Radon-Nikodym derivative $ e^{- c_{X,\sigma,\varsigma}} $. Moreover $ \mu^{X,\sigma,\varsigma} = \mu^{X,\sigma,\varphi} $ so that $ \mu^{X,\sigma,\varsigma} $ corresponds by Proposition \ref{Constructing KMS States from Quasi-Invariant Measures} to a KMS state for the generalized gauge dynamics of $ \Trip{X}{\sigma}{\varsigma} $.
\end{Cor}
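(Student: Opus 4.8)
The plan is to reduce the statement to two results already in hand: the Radon--Nikodym characterization of Theorem \ref{A Characterization of Solutions of the Radon-Nikodym Problem} and the measure-to-state passage of Proposition \ref{Constructing KMS States from Quasi-Invariant Measures}. The only genuinely new object is the rescaled potential $ \varsigma $, and the whole argument rests on the elementary identity $ \Ruelle{X}{T}{\varphi + c} = e^{c}\, \Ruelle{X}{T}{\varphi} $ for a constant $ c $: shifting a potential by a constant rescales the associated Ruelle operator by the exponential of that constant.

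First I would confirm that $ \Trip{X}{\sigma}{\varsigma} $ is a $ k $-Ruelle dynamical system. Conditions (1)--(3) of Definition \ref{The Cocycle Condition} are inherited immediately, since $ X $ and $ \sigma $ are unchanged and each $ \func{\varsigma_{i}}{x} = \func{\ln}{\lambda_{i}^{X,\sigma,\varphi}} - \func{\varphi_{i}}{x} $ is continuous. For the cocycle condition \eqref{eq:the cocycle condition}, I would use that it is linear in the tuple of potentials --- equivalently that $ \func{Z^{1}}{\N^{k},\Cont{X,\R}} $ is a group --- together with the fact that the constant tuple $ \Seq{\func{\ln}{\lambda_{i}^{X,\sigma,\varphi}} 1_{X}}{i \in \SqBr{k}} $ satisfies it by Example \ref{ex:trivial cocycles} and that $ \varphi $ satisfies it by hypothesis; hence the combination $ \varsigma $ does as well.

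The heart of the proof is identifying the eigenmeasure. Expanding $ \Ruelle{X}{\sigma_{i}}{\varsigma_{i}} $ via \eqref{eq:def of ruelle operator} and pulling the constant $ \lambda_{i}^{X,\sigma,\varphi} = e^{\func{\ln}{\lambda_{i}^{X,\sigma,\varphi}}} $ out of the defining sum exhibits $ \Ruelle{X}{\sigma_{i}}{\varsigma_{i}} $ as a fixed positive scalar multiple of a Ruelle operator determined by $ \varphi_{i} $, the scalar being governed by $ \lambda_{i}^{X,\sigma,\varphi} $. Since a positive scalar multiple of an operator has exactly the same eigenmeasures (with eigenvalues multiplied by that scalar), the uniqueness clause of Definition \ref{RPF Ruelle Dynamical System} transfers from the $ \varphi $-system to the $ \varsigma $-system, so $ \Trip{X}{\sigma}{\varsigma} $ again satisfies Definition \ref{RPF Ruelle Dynamical System}. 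Feeding the defining eigenrelation $ \func{\Br{\Ruelle{X}{\sigma_{i}}{\varphi_{i}}}^{\ast}}{\mu^{X,\sigma,\varphi}} = \lambda_{i}^{X,\sigma,\varphi} \mu^{X,\sigma,\varphi} $ into this proportionality, the scalar $ \lambda_{i}^{X,\sigma,\varphi} $ cancels and shows that $ \mu^{X,\sigma,\varphi} $ is a probability eigenmeasure with eigenvalue $ 1 $ for each dual operator attached to $ \varsigma $; uniqueness then forces $ \mu^{X,\sigma,\varsigma} = \mu^{X,\sigma,\varphi} $.

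With the eigenmeasure pinned down and its eigenvalues all equal to $ 1 $, the remainder is bookkeeping. Theorem \ref{A Characterization of Solutions of the Radon-Nikodym Problem}, applied to $ \Trip{X}{\sigma}{\varsigma} $, yields that $ \mu^{X,\sigma,\varsigma} $ is quasi-invariant for $ \DRG{X}{\sigma} $ with continuous Radon--Nikodym derivative $ e^{- c_{X,\sigma,\varsigma}} $; this is the one place where the sign of the constant shift must be tracked attentively, so that the exponent lands on $ - c_{X,\sigma,\varsigma} $ rather than its reciprocal. Finally, Proposition \ref{Constructing KMS States from Quasi-Invariant Measures}, taken with $ \beta = 1 $ and the cocycle $ c_{X,\sigma,\varsigma} $, converts this quasi-invariant measure into the asserted KMS state for the generalized gauge dynamics of $ \Trip{X}{\sigma}{\varsigma} $. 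The main obstacle throughout is the middle step: correctly following the scalar $ \lambda_{i}^{X,\sigma,\varphi} $ and the sign of the potential through the dualization, so that the eigenvalue comes out exactly $ 1 $ and the Radon--Nikodym derivative is exactly $ e^{-c_{X,\sigma,\varsigma}} $.
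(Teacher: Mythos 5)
Your overall route is the paper's: check the cocycle condition for $\varsigma$, use the constant-shift identity $\Ruelle{X}{T}{\varphi + c} = e^{c}\,\Ruelle{X}{T}{\varphi}$ to transfer the unique-eigenmeasure property, then invoke Theorem \ref{A Characterization of Solutions of the Radon-Nikodym Problem} and Proposition \ref{Constructing KMS States from Quasi-Invariant Measures} with $\beta = 1$. But the middle step, as you wrote it, is false --- and it is exactly the sign you flagged as delicate. Since $\varsigma_{i} = \func{\ln}{\lambda^{X,\sigma,\varphi}_{i}} - \varphi_{i}$, pulling the constant out of the defining sum gives $\Ruelle{X}{\sigma_{i}}{\varsigma_{i}} = \lambda^{X,\sigma,\varphi}_{i}\,\Ruelle{X}{\sigma_{i}}{-\varphi_{i}}$: a scalar multiple of the Ruelle operator of $-\varphi_{i}$, \emph{not} of $\varphi_{i}$. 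Consequently the eigenrelation $\func{\Br{\Ruelle{X}{\sigma_{i}}{\varphi_{i}}}^{\ast}}{\mu^{X,\sigma,\varphi}} = \lambda^{X,\sigma,\varphi}_{i}\mu^{X,\sigma,\varphi}$ cannot be ``fed into'' your proportionality: it gives no information about $\func{\Br{\Ruelle{X}{\sigma_{i}}{-\varphi_{i}}}^{\ast}}{\mu^{X,\sigma,\varphi}}$, so neither your claim that $\mu^{X,\sigma,\varphi}$ is an eigenvalue-$1$ eigenmeasure for the duals attached to $\varsigma$, nor the transfer of the uniqueness clause to the $\varsigma$-system, follows as stated. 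The same sign problem recurs at the next step: Theorem \ref{A Characterization of Solutions of the Radon-Nikodym Problem} applied to $\Trip{X}{\sigma}{\varsigma}$ characterizes the Radon--Nikodym derivative $e^{+ c_{X,\sigma,\varsigma}}$, the reciprocal of what the corollary asserts.

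The repair is precisely the paper's maneuver: work with $-\varsigma$ throughout the eigenmeasure identification. Since $-\varsigma_{i} = \varphi_{i} - \func{\ln}{\lambda^{X,\sigma,\varphi}_{i}}$, one has $\Ruelle{X}{\sigma_{i}}{-\varsigma_{i}} = \frac{1}{\lambda^{X,\sigma,\varphi}_{i}}\Ruelle{X}{\sigma_{i}}{\varphi_{i}}$; now the scalar does cancel, giving $\func{\Br{\Ruelle{X}{\sigma_{i}}{-\varsigma_{i}}}^{\ast}}{\mu^{X,\sigma,\varphi}} = \mu^{X,\sigma,\varphi}$, and any eigenpair $\Pair{\alpha_{i}}{\nu}$ for the $-\varsigma$ duals rescales to the eigenpair $\Pair{\lambda^{X,\sigma,\varphi}_{i}\alpha_{i}}{\nu}$ for the $\varphi$ duals, so the uniqueness in Definition \ref{RPF Ruelle Dynamical System} forces $\nu = \mu^{X,\sigma,\varphi}$ and $\alpha_{i} = 1$; this is how the paper obtains $\mu^{X,\sigma,\varsigma} = \mu^{X,\sigma,\varphi}$. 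Then Theorem \ref{A Characterization of Solutions of the Radon-Nikodym Problem} is applied to $\Trip{X}{\sigma}{-\varsigma}$, and the identity $c_{X,\sigma,-\varsigma} = - c_{X,\sigma,\varsigma}$ (the map $\varphi \mapsto c_{X,\sigma,\varphi}$ is additive, by the uniqueness in Proposition \ref{cor:The Classification Theorem for Continuous Real-Valued 1-Cocycles on Deaconu-Renault Groupoids} and Corollary \ref{prop: Alex semigroup cocycles correspondence}) lands the derivative on $e^{- c_{X,\sigma,\varsigma}}$. Your final appeal to Proposition \ref{Constructing KMS States from Quasi-Invariant Measures} with $\beta = 1$ is then correct, and your first paragraph (the cocycle condition for $\varsigma$, via linearity and the constancy of $\func{\ln}{\lambda^{X,\sigma,\varphi}_{i}}$) matches the paper. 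So the architecture is right, but the core computation asserts false identities; negating the potential in the middle step turns your argument into the paper's proof.
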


\begin{proof}
Since the $ \Seq{\varphi_{i}}{i \in \SqBr{k}} $ satisfies the cocycle condition, it is easily checked that 
 $ \Seq{ \varsigma_{i}}{i \in \SqBr{k}} = \Seq{\func{\ln}{\lambda^{X,\sigma,\varphi}_{i}}- \varphi_{i} }{i \in \SqBr{k}} $ satisfies the cocycle condition, so that $ \Trip{X}{\sigma}{\varsigma} $ is a $ k $-Ruelle dynamical system. Similarly, $ \Trip{X}{\sigma}{- \varsigma} $ is a $ k $-Ruelle dynamical system too.

Suppose that $ \Seq{\alpha_{i}}{i \in \SqBr{k}} $ is a $ k $-tuple in $ \N_{> 0} $ and $ \nu $ is a Borel probability measure on $ X $ such that $ \func{\Br{\Ruelle{X}{\sigma_{i}}{-  \varsigma_{i}}}^{\ast}}{\nu} = \alpha_{i} \nu $ for all $ i \in \SqBr{k} $. Then for each $ i \in \SqBr{k} $, the equalities
$$
	\Ruelle{X}{\sigma_{i}}{- \varsigma_{i}}
= \Ruelle{X}{\sigma_{i}}{\varphi_{i} - \func{\ln}{\lambda^{X,\sigma,\varphi}_{i}}}
= e^{- \func{\ln}{\lambda^{X,\sigma,\varphi}_{i}}}  \Ruelle{X}{\sigma_{i}}{\varphi_{i}}
= \frac{1}{\lambda^{X,\sigma,\varphi}_{i}} \Ruelle{X}{\sigma_{i}}{\varphi_{i}},
$$
imply
$$
  \func{\Br{\Ruelle{X}{\sigma_{i}}{\varphi_{i}}}^{\ast}}{\nu}
= \func{\Br{\lambda^{X,\sigma,\varphi}_{i}  \Ruelle{X}{\sigma_{i}}{- \varsigma_{i}}}^{\ast}}{\nu}
= \lambda^{X,\sigma,\varphi}_{i} \func{\Br{\Ruelle{X}{\sigma_{i}}{- \varsigma_{i}}}^{\ast}}{\nu}
= \lambda^{X,\sigma,\varphi}_{i} \alpha_{i} \nu.
$$
As $ \Trip{X}{\sigma}{\varphi} $ satisfies Definition \ref{RPF Ruelle Triple}, it follows that $ \nu = \mu^{X,\sigma,\varphi} $ and $ \alpha_{i} = 1 $ for each $ i \in \SqBr{k} $, so $ \Trip{X}{\sigma}{- \varsigma} $ satisfies Definition  \ref{RPF Ruelle Triple} too. Moreover by Definition  \ref{RPF Ruelle Dynamical System} we have $ \mu^{X,\sigma,\varsigma} = \mu^{X,\sigma,\varphi} $.

Now, as $ \func{\Br{\Ruelle{X}{\sigma_{i}}{- \varsigma_{i}}}^{\ast}}{\mu^{X,\sigma,\varphi}} = \mu^{X,\sigma,\varphi} $ for all $ i \in \SqBr{k} $, Theorem \ref{A Characterization of Solutions of the Radon-Nikodym Problem} tells us that $ \mu^{X,\sigma,\varphi} $ is quasi-invariant for $ \Pair{X}{\sigma} $, with continuous Radon-Nikodym derivative $ e^{c_{X,\sigma,- \varsigma}} = e^{- c_{X,\sigma,\varsigma}} $. Therefore, by Proposition \ref{Constructing KMS States from Quasi-Invariant Measures}, $ \mu^{X,\sigma,\varphi} $ corresponds to a KMS-state for the generalized gauge dynamics of $ \Trip{X}{\sigma}{\varsigma} $.
\end{proof}

%-------------------------------------------------------------------------------------------------------------------------

The following corollary is thus clear. \\

%-------------------------------------------------------------------------------------------------------------------------

\begin{Cor} \label{cor:Existence of KMS States for Generalized Gauge Dynamics of RPF Ruelle Dynamical Systems}
Let $ \Trip{X}{\sigma}{\varphi} $ be a $ k $-Ruelle dynamical system, which satisfies the conditions of Definition \ref{RPF Ruelle Dynamical System}, and let $ \beta \in \R\setminus \{ 0\} $. Define (with notation as in Definition \ref{RPF Ruelle Dynamical System}) for $ i \in \SqBr{k} $ and $ x \in X $
$$
\func{\varsigma_{i}}{x} \df \frac{\func{\ln}{\lambda^{X,\sigma,\varphi}_{i}} - \func{\varphi_{i}}{x}}{\beta}
\qquad \text{and} \qquad
\varsigma \df \Seq{\varsigma_{i}}{i \in \SqBr{k}}.
$$
Then $ \Trip{X}{\sigma}{\varsigma} $ also satisfies the conditions of Definition \ref{RPF Ruelle Dynamical System} and $ \mu^{X,\sigma,\varsigma} = \mu^{X,\sigma,\varphi} $ is a quasi-invariant measure for $ \Pair{X}{\sigma} $, with continuous Radon-Nikodym derivative $ e^{- \beta c_{X,\sigma,\varsigma}} $. Consequently, $ \mu^{X,\sigma,\varphi} $ corresponds by Proposition \ref{Constructing KMS States from Quasi-Invariant Measures} to a $ \KMS{\beta} $-state for the generalized gauge dynamics of $ \Trip{X}{\sigma}{\varsigma} $.
\end{Cor}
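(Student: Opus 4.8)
The plan is to reduce everything to the already-established Corollary \ref{Existence of KMS States for Generalized Gauge Dynamics of RPF Ruelle Dynamical Systems} by exploiting the fact that the rescaled tuple $- \beta \varsigma = \Seq{\varphi_i - \ln \lambda^{X,\sigma,\varphi}_i}{i \in \SqBr{k}}$ is simply $\varphi$ shifted by constants. First I would record two structural facts about the assignment $\varphi \mapsto c_{X,\sigma,\varphi}$. By Corollary \ref{prop: Alex semigroup cocycles correspondence} it is a group isomorphism, and inspecting the explicit formula \eqref{eq:general formula for the cocycle} (whose module action is precomposition by $\sigma$, hence $\R$-linear) shows it is in fact real-linear; consequently $c_{X,\sigma,- \beta \varsigma} = - \beta \, c_{X,\sigma,\varsigma}$, so that $e^{c_{X,\sigma,- \beta \varsigma}} = e^{- \beta c_{X,\sigma,\varsigma}}$. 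Moreover, since constant tuples satisfy the cocycle condition (Example \ref{ex:trivial cocycles}) and Equation \eqref{eq:the cocycle condition} is linear in $\varphi$, the tuples $\varsigma$ and $- \beta \varsigma$ both satisfy it; hence $\Trip{X}{\sigma}{\varsigma}$ is a genuine $k$-Ruelle dynamical system and its generalized gauge dynamics is well defined.

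Next I would analyze the constant-shifted system $\Trip{X}{\sigma}{- \beta \varsigma}$, for which everything is clean. Adding the constant $- \ln \lambda^{X,\sigma,\varphi}_i$ to $\varphi_i$ multiplies the Ruelle operator by $1/\lambda^{X,\sigma,\varphi}_i$, so $\Ruelle{X}{\sigma_i}{- \beta \varsigma_i} = (1/\lambda^{X,\sigma,\varphi}_i) \Ruelle{X}{\sigma_i}{\varphi_i}$. Dualizing and feeding in the eigenmeasure $\mu^{X,\sigma,\varphi}$ of Definition \ref{RPF Ruelle Dynamical System} gives $\func{\Br{\Ruelle{X}{\sigma_i}{- \beta \varsigma_i}}^{\ast}}{\mu^{X,\sigma,\varphi}} = \mu^{X,\sigma,\varphi}$ for every $i \in \SqBr{k}$; the same computation shows $\Trip{X}{\sigma}{- \beta \varsigma}$ again satisfies Definition \ref{RPF Ruelle Dynamical System}, now with all eigenvalues equal to $1$ and the same eigenmeasure. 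Theorem \ref{A Characterization of Solutions of the Radon-Nikodym Problem}, applied to $\Trip{X}{\sigma}{- \beta \varsigma}$, then yields that $\mu^{X,\sigma,\varphi}$ is quasi-invariant for $\Pair{X}{\sigma}$ with continuous Radon-Nikodym derivative $e^{c_{X,\sigma,- \beta \varsigma}} = e^{- \beta c_{X,\sigma,\varsigma}}$. Feeding this quasi-invariant measure into Proposition \ref{Constructing KMS States from Quasi-Invariant Measures} produces the desired $\KMS{\beta}$-state for the generalized gauge dynamics of $\Trip{X}{\sigma}{\varsigma}$.

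The delicate point, and the step I would spend the most care on, is the assertion that $\Trip{X}{\sigma}{\varsigma}$ \emph{itself} satisfies Definition \ref{RPF Ruelle Dynamical System} with eigenmeasure $\mu^{X,\sigma,\varphi}$, which is what lets one invoke Proposition \ref{Constructing KMS States from Quasi-Invariant Measures} verbatim. The constant-shift trick above does not transfer to $\varsigma$: passing from $\varphi_i$ to $\varsigma_i = (\ln \lambda^{X,\sigma,\varphi}_i - \varphi_i)/\beta$ rescales the potential by $- 1/\beta$, and a nonconstant rescaling does not factor through the Ruelle operator the way an additive constant does, so the spectral data of $\Trip{X}{\sigma}{\varsigma}$ genuinely differs from that of $\Trip{X}{\sigma}{\varphi}$ unless $\beta = - 1$. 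I would resolve this either by re-running the Ruelle--Perron--Frobenius machinery for $\varsigma$ directly, since $\varsigma_i$ inherits H\"older continuity from $\varphi_i$ while the $\sigma_i$ are unchanged, so Theorem \ref{The RPF Theorem for Ruelle Dynamical Systems} applies as soon as some $\Trip{X}{\sigma^{n}}{\func{c_\varsigma}{n}}$ meets the one-dimensional hypotheses; or by noting that the conclusion of Proposition \ref{Constructing KMS States from Quasi-Invariant Measures} really depends only on quasi-invariance together with the correct continuous Radon-Nikodym cocycle, both secured in the previous paragraph. Either route delivers the $\KMS{\beta}$-state, and the cleaner organization is to route the Radon-Nikodym and KMS conclusions through $- \beta \varsigma$, treating the eigenmeasure identity $\mu^{X,\sigma,\varsigma} = \mu^{X,\sigma,\varphi}$ as a separate consequence once the rescaled system is shown to be RPF.
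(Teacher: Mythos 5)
Your proposal is correct and is, in substance, the paper's own proof: the paper gives no independent argument for this corollary --- it says only that it ``is thus clear'' --- because it is the proof of Corollary \ref{Existence of KMS States for Generalized Gauge Dynamics of RPF Ruelle Dynamical Systems} repeated verbatim with $ -\beta\varsigma = \Seq{\varphi_{i} - \func{\ln}{\lambda^{X,\sigma,\varphi}_{i}}}{i \in \SqBr{k}} $ playing the role of $ -\varsigma $, exactly as you organize it: the constant shift scales $ \Ruelle{X}{\sigma_{i}}{\varphi_{i}} $ by $ 1/\lambda^{X,\sigma,\varphi}_{i} $, uniqueness of the eigenpair transfers to the shifted system, Theorem \ref{A Characterization of Solutions of the Radon-Nikodym Problem} applied to $ \Trip{X}{\sigma}{-\beta\varsigma} $ yields quasi-invariance of $ \mu^{X,\sigma,\varphi} $ with continuous derivative $ e^{c_{X,\sigma,-\beta\varsigma}} = e^{-\beta c_{X,\sigma,\varsigma}} $, and Proposition \ref{Constructing KMS States from Quasi-Invariant Measures} then yields the $ \KMS{\beta} $-state.

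Your ``delicate point'' is moreover genuine and is glossed over by the paper: the stated hypotheses do not abstractly force $ \Trip{X}{\sigma}{\varsigma} $ itself to satisfy Definition \ref{RPF Ruelle Dynamical System}, since the unique-eigenmeasure property of the potential $ \varphi_{i} $ need not pass to $ -\varphi_{i}/\beta $ --- already for $ k = 1 $ and a Bernoulli-type potential the eigenmeasures of $ \Br{\Ruelle{X}{\sigma}{\varphi}}^{\ast} $ and $ \Br{\Ruelle{X}{\sigma}{-\varphi}}^{\ast} $ differ, so the identification $ \mu^{X,\sigma,\varsigma} = \mu^{X,\sigma,\varphi} $ really concerns the rescaled system $ -\beta\varsigma $ (or the special case $ \beta = -1 $, as you note). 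Your second resolution is the right one: the construction behind Proposition \ref{Constructing KMS States from Quasi-Invariant Measures} (Renault's) uses only quasi-invariance of $ \mu $ together with the correct continuous Radon--Nikodym cocycle $ e^{-\beta c_{X,\sigma,\varsigma}} $, both of which you secured through $ -\beta\varsigma $; the RPF property of the $ \varsigma $-system itself requires separate verification, e.g.\ via Walters conditions when the potentials are H\"{o}lder, which is how it is handled in the $ k $-graph applications of Section \ref{sec:gene-gauge-dyn-higher-rank-graphs}.
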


%%%%%%%%%%%%%%%%%%%%%%%%%%%%%%%%%%%%%%%%%%%%%%%%%%%%%%%%%%%%%%%%%%%%%%%%%%%%%%%%%%%%%%%%%%%%%%%%%%%%%%%%%%%%%%%%%%%%%%%%%%%%%%%%%%%%%%%%%%%%%%%
\section{KMS states associated to higher-rank graphs}\label{sec:gene-gauge-dyn-higher-rank-graphs} %%%%%%%%%%%%%%%%%%%%%%%%%%%%%%%%%%%%%%%%%%%%
%%%%%%%%%%%%%%%%%%%%%%%%%%%%%%%%%%%%%%%%%%%%%%%%%%%%%%%%%%%%%%%%%%%%%%%%%%%%%%%%%%%%%%%%%%%%%%%%%%%%%%%%%%%%%%%%%%%%%%%%%%%%%%%%%%%%%%%%%%%%%%%

In this section, we shall use the results obtained thus far to answer existence-uniqueness questions on KMS states for generalized gauge dynamics associated to finite higher-rank graphs.

In what follows, $ \N^{k} $ is viewed as a countable category with a single object $ 0 $ and composition of morphisms implemented by $ + $. \\

% ---------------------------------------------------------------------------------------------------------------------------------------------

\begin{Def}[$ k $-Graphs \cite{KP1}] \label{Higher-Rank Graph}
A \emph{higher-rank graph $ \Lambda $ of rank $ k $} or, more briefly, a \emph{$ k $-graph} is a countable category $ \Lambda $ equipped with a functor $ d: \Lambda \to \N^{k} $ --- called the \emph{degree functor} --- such that  the factorization property holds: for every $ \lambda \in \Lambda $ and $ m,n \in \N^{k} $ such that $ \func{d}{\lambda} = m + n $, there are unique $ \mu,\nu \in \Lambda $ that satisfy the following conditions:
\begin{enumerate}
\item[(1)]
$ \func{d}{\mu} = m $ and $ \func{d}{\nu} = n $.

\item[(2)]
$ \lambda = \mu \nu $.
\end{enumerate}

For notational convenience, we will adopt the following $ k $-graph-theoretic terminology. Given a $ k $-graph $ \Lambda $ with degree functor $ d $, for each $ n \in \N^{k} $, let $ \Lambda^{n} \df \InvIm{d}{\SSet{n}} $. The elements of $ \Lambda^{0} $ are called the \emph{vertices} of $ \Lambda $, and it can be shown that $ \Obj{\Lambda} = \Lambda^{0} $. The elements of $ \Lambda^{\mathbf{e}_{i}} $, for $ \mathbf{e}_{i} $ a canonical generator of $\N^{k} $, are called the \emph{edges} of $ \Lambda $. Also, let
\begin{gather*}
v \Lambda       \df \Set{\lambda \in \Lambda}{\func{r}{\lambda} = v}, \qquad
v \Lambda^{n}   \df \Set{\lambda \in \Lambda^{n}}{\func{r}{\lambda} = v}, \\
v \Lambda w     \df \Set{\lambda \in \Lambda}{\func{s}{\lambda} = w ~ \text{and} ~ \func{r}{\lambda} = v}, \qquad
v \Lambda^{n} w \df \Set{\lambda \in \Lambda^{n}}{\func{s}{\lambda} = w ~ \text{and} ~ \func{r}{\lambda} = v}.
\end{gather*}

A $ k $-graph $ \Lambda $ is called \emph{finite} if $ \Card{\Lambda^{n}} < \infty $ for all $ n \in \N^{k} $; $ \Lambda $ is said to be \emph{source-free} if $ v \Lambda^{n} \neq \varnothing $ for all $ n \in \N^{k} $ and $ v \in \Lambda^{0} $; and $ \Lambda $ is said to be \emph{row-finite} if $ v \Lambda^{n}$ is finite for all $ n \in \N^{k} $ and $ v \in \Lambda^{0} $.

Moreover, a \emph{$ k $-graph morphism} from a $ k $-graph $ \Lambda $ to another $ \Lambda' $ is a degree-preserving functor $ f: \Lambda \to \Lambda' $. \\
\end{Def}

% ---------------------------------------------------------------------------------------------------------------------------------------------

\begin{Def}[Strong Connectivity and Primitivity \cite{ALRS,KP1,KP2}] \label{Source-Freeness, Finiteness, Strong Connectivity, and Primitivity}
Let $ \Lambda $ be a $ k $-graph. Then $ \Lambda $ is said to be \emph{strongly connected} if  $ v \Lambda w \neq \varnothing $ for all $ v,w \in \Lambda^{0} $, while $ \Lambda $ is said to be \emph{primitive} if there is an $ n \in \N^{k} \setminus \SSet{0} $ such that $ v \Lambda^{n} w \neq \varnothing $ for all $ v,w \in \Lambda^{0} $. Evidently, primitivity is a stronger condition than strong connectivity. \\
\end{Def}

\begin{Rmk}
Note that $ \N^{k} $ may itself be regarded as a $ k $-graph with one vertex. It is called the \emph{trivial $ k $-graph} and is both finite and primitive. \\
\end{Rmk}

\begin{Eg}[see Example 1.7(ii) of \cite{KP1}] \label{cotrivial k-graph}
Consider the countable category $ \Omega_{k} $ whose underlying set is
$$
\Omega_{k} \df \Set{\Pair{m}{n} \in \N^{k} \times \N^{k}}{m \leq n}
$$
and whose range map, source map, and morphisms are defined as follows:
\begin{itemize}
\item
If $ \Pair{m}{n} \in \Omega_{k} $, then $ \func{s}{m,n} \df \Pair{n}{n} $ and $ \func{r}{m,n} \df \Pair{m}{m} $, so that $ \Pair{\Pair{k}{l}}{\Pair{m}{n}} \in \Omega_{k}^{2} $ is composable if and only if $ l = m $.

\item
If $ \Pair{l}{m},\Pair{m}{n} \in \Omega_{k} $, then $ \Pair{l}{m} \Pair{m}{n} \df \Pair{l}{n} $.
\end{itemize}
If we equip $ \Omega_{k} $ with the degree functor $ d: \Omega_{k} \to \N^{k} $ defined by: $ \func{d}{m,n} \df n - m $ for $ \Pair{m}{n} \in \Omega_{k} $, then $ \Omega_{k} $ is a $ k $-graph.  Note that $ \Omega_{k} $ is both source-free and row-finite but neither finite nor strongly connected.
\end{Eg}

% ---------------------------------------------------------------------------------------------------------------------------------------------

For the remainder of this section, we shall make the following standing assumptions:
\begin{equation} \label{eq:standing-assumption}
\hbox{The $ k $-graph $ \Lambda $ is source-free, finite, primitive, and non-empty.}
\end{equation}

We will now detail more $ k $-graphs structures. \\

% ---------------------------------------------------------------------------------------------------------------------------------------------

\begin{Def}[Infinite Path Space \cite{KP1}] \label{Infinite-Path Space}
Let $ \Lambda $ be a $ k $-graph satisfying the standing assumptions of \eqref{eq:standing-assumption}. The \emph{infinite-path space} of $ \Lambda $, denoted by $ \Lambda^{\infty} $, is defined by
$$
\Lambda^{\infty} \df \Set{f: \Omega_{k} \to \Lambda}{f ~ \text{is a} ~ k \text{-graph morphism}}.
$$
As $ \Lambda $ is source-free and finite, $ \Lambda^{\infty} $ becomes a non-empty compact Hausdorff space when given the topology generated by the base consisting of cylinder sets, i.e., non-empty compact subsets of the form $ \func{\mathcal{Z}}{\lambda} $ for all $ \lambda \in \Lambda $, where
$$ 
\func{\mathcal{Z}}{\lambda} \df \Set{x \in \Lambda^{\infty}}{\func{x}{0,\func{d}{\lambda}} = \lambda}.
$$
\end{Def}

% ---------------------------------------------------------------------------------------------------------------------------------------------

We can then define a commuting $ k $-tuple $ \sigma = \Seq{\sigma_{i}}{i \in \SqBr{k}} $ of local homeomorphisms of $ \Lambda^{\infty} $ by setting, for all $ i \in \SqBr{k} $, $ x \in \Lambda^{\infty} $, and $ \Pair{m}{n} \in \Omega_{k} $:
$$
\FUNC{\func{\sigma_{i}}{x}}{m,n} \df \func{x}{m + \mathbf{e}_{i},n + \mathbf{e}_{i}}.
$$
We call the $ k $-tuple $ \sigma $ the \emph{shift} on $ \Lambda $, and it is easy to see that, for all $ l \in \N^{k} $, $ x \in \Lambda^{\infty} $, and $ \Pair{m}{n} \in \Omega_{k} $:
$$
\FUNC{\func{\sigma^{l}}{x}}{m,n} = \func{x}{m + l,n + l}.
$$
Furthermore, it can be shown that $ \sigma_{i} $ is surjective for each $ i \in \SqBr{k} $. We refer the reader to \cite{KP1} for details.

We now state the following lemma whose standard proof we omit, see for example \cite[Proposition 2.15]{FGJKP}. \\

% ---------------------------------------------------------------------------------------------------------------------------------------------

\begin{Lem} \label{A Compatible Ultrametric on the Infinite-Path Space}
Let $ \Lambda $ be a $ k $-graph satisfying the standing assumptions of  \eqref{eq:standing-assumption}. Define $ \rho_{\Lambda}: \Lambda^{\infty} \times \Lambda^{\infty} \to \R_{\geq 0} $, where for all $ x,y \in \Lambda^{\infty} $ we set	$ \func{\rho_{\Lambda}}{x,y} \df {2^{- N_{x y}}} $, with
$$
N_{x y} \df \func{\min}{\Set{n \in \N}{\func{x}{n p,\Br{n + 1} p} \neq \func{y}{n p,\Br{n + 1} p}}}.
$$
Here, we have arbitrarily chosen $ p \in \N^{k} $ to satisfy $ \Seq{1}{i \in \SqBr{k}} \leq p $. Furthermore, $ \func{\min}{\varnothing} \df \infty $ by convention. Then $ \rho_{\Lambda} $ is a metric on $ \Lambda^{\infty} $ compatible with the cylinder set topology. \\
\end{Lem}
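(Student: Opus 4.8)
The plan is to verify that $\rho_{\Lambda}$ is an ultrametric and then that its metric topology coincides with the cylinder-set topology, and the conceptual heart of the whole argument is a single consequence of the factorization property that I would establish first: \emph{an infinite path $x \in \Lambda^{\infty}$ is completely determined by its blocks $\func{x}{np,(n + 1)p}$, $n \in \N$.} Given any $\Pair{m}{n} \in \Omega_{k}$, since $p \geq \Seq{1}{i \in \SqBr{k}}$ we have $Np \to \infty$ in every coordinate, so we may choose $N \in \N$ with $n \leq Np$. The factorization property then expresses $\func{x}{0,Np}$ uniquely as $\func{x}{0,m} \func{x}{m,n} \func{x}{n,Np}$, and it expresses $\func{x}{0,Np}$ uniquely as the concatenation $\func{x}{0,p} \func{x}{p,2p} \cdots \func{x}{(N - 1)p,Np}$ of the first $N$ blocks. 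Hence every value $\func{x}{m,n}$ is recovered from the blocks, so any two infinite paths agreeing on all blocks must coincide.

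With this in hand the metric axioms are routine. Symmetry is immediate from $N_{xy} = N_{yx}$; and $\func{\rho_{\Lambda}}{x,y} = 0$ forces $N_{xy} = \infty$, i.e.\ $x$ and $y$ agree on every block, whence $x = y$ by the previous paragraph (the converse being clear). For the ultrametric inequality I would note that if $x,y$ agree on blocks $0,\dots,m - 1$ and $y,z$ likewise agree on blocks $0,\dots,m - 1$, then so do $x,z$; therefore $N_{xz} \geq \min\SSet{N_{xy},N_{yz}}$, which gives $\func{\rho_{\Lambda}}{x,z} \leq \max\SSet{\func{\rho_{\Lambda}}{x,y},\func{\rho_{\Lambda}}{y,z}}$.

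For compatibility with the cylinder-set topology I would prove the identity
\[
\Ball{\rho_{\Lambda}}{x}{2^{- N}} = \func{\mathcal{Z}}{\func{x}{0,(N + 1)p}}
\]
for every $x \in \Lambda^{\infty}$ and $N \in \N$. The inclusion $\subseteq$ holds because $N_{xy} > N$ means $x,y$ agree on blocks $0,\dots,N$, so that $\func{y}{0,(N + 1)p} = \func{x}{0,(N + 1)p}$; the reverse inclusion is exactly the factorization statement that agreement of $\func{y}{0,(N + 1)p}$ with $\func{x}{0,(N + 1)p}$ forces agreement on each constituent block, whence $N_{xy} \geq N + 1$. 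Since these balls form a neighbourhood base at $x$ and are themselves cylinder sets, every $\rho_{\Lambda}$-ball is open in the cylinder-set topology. Conversely, given a basic cylinder $\func{\mathcal{Z}}{\lambda}$ and a point $x$ in it, I would choose $N$ with $(N + 1)p \geq \func{d}{\lambda}$ and apply factorization once more to conclude $\Ball{\rho_{\Lambda}}{x}{2^{- N}} \subseteq \func{\mathcal{Z}}{\lambda}$; hence each cylinder set is $\rho_{\Lambda}$-open. The two topologies therefore coincide.

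The only genuine subtlety --- the main obstacle --- is the bookkeeping in the factorization argument: one must invoke the uniqueness clause of the factorization property repeatedly while tracking the coordinatewise partial order on $\N^{k}$, since the blocks are indexed by multiples of $p$ rather than by iterates of a single shift. Once the principle that blocks determine the path is secured, all remaining steps are formal.
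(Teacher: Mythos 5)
Your proof is correct, and it is precisely the standard argument the paper alludes to when it omits the proof and cites \cite[Proposition 2.15]{FGJKP}: the factorization property shows the blocks $\func{x}{np,\Br{n+1}p}$ determine the path, the ultrametric inequality follows from agreement on initial blocks, and the identity $\Ball{\rho_{\Lambda}}{x}{2^{-N}} = \func{\mathcal{Z}}{\func{x}{0,\Br{N+1}p}}$ yields compatibility of the two topologies. No gaps; your attention to the repeated use of the uniqueness clause of factorization (splitting $\func{x}{0,Np}$ into blocks indexed by multiples of $p$) is exactly the only point requiring care.
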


%---------------------------------------------------------------------------------------------------------------------------------------------

\begin{Lem} \label{Positive Expansivity and Exactness of a Suitable Power of the Shift}
Let $ \Lambda $ be a $ k $-graph satisfying the standing assumptions of  \eqref{eq:standing-assumption}. For any $ p \in N^{k} $ satisfying $ \Seq{1}{i \in \SqBr{k}} \leq p $, the local homeomorphism $ \sigma^{p} $ is positively expansive and exact.
\end{Lem}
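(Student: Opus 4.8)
The plan is to verify the two conditions of Definition \ref{The Walters Criteria and the Jiang-Ye Conditions} for the map $T = \sigma^{p}$ separately, working throughout with the metric $\rho_{\Lambda}$ from Lemma \ref{A Compatible Ultrametric on the Infinite-Path Space} built from this same $p$, and using the elementary identity $(\sigma^{p})^{n} = \sigma^{np}$ together with $\FUNC{\func{\sigma^{np}}{x}}{m,m'} = \func{x}{m + np, m' + np}$.

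For positive expansivity I would take $\epsilon = 1$. Given distinct $x,y \in \Lambda^{\infty}$, the fact that $\rho_{\Lambda}$ is a genuine metric (Lemma \ref{A Compatible Ultrametric on the Infinite-Path Space}) forces $\rho_{\Lambda}(x,y) > 0$, so the set $S \df \Set{n \in \N}{\func{x}{np,(n+1)p} \neq \func{y}{np,(n+1)p}}$ is non-empty; let $s = \min S$. Applying $\sigma^{sp}$ moves the $s$-th diagonal $p$-block to position $0$, since $\FUNC{\func{\sigma^{sp}}{x}}{0,p} = \func{x}{sp,(s+1)p}$, so $\func{\sigma^{sp}}{x}$ and $\func{\sigma^{sp}}{y}$ already disagree on the block $(0,p)$ and hence $\rho_{\Lambda}(\func{\sigma^{sp}}{x},\func{\sigma^{sp}}{y}) = 2^{0} = 1 \geq \epsilon$. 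As $(\sigma^{p})^{s} = \sigma^{sp}$, this is exactly the required separation, so $\sigma^{p}$ is positively expansive.

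For exactness, since the cylinder sets form a base it suffices to find, for each $\func{\mathcal{Z}}{\lambda}$, an $n$ with $\func{\sigma^{np}}{\func{\mathcal{Z}}{\lambda}} = \Lambda^{\infty}$, because then $\func{\sigma^{np}}{U} \supseteq \func{\sigma^{np}}{\func{\mathcal{Z}}{\lambda}} = \Lambda^{\infty}$ for any open $U \supseteq \func{\mathcal{Z}}{\lambda}$. Writing $v = \func{s}{\lambda}$ and $\ell = \func{d}{\lambda}$ and taking $n$ large enough that $np \geq \ell$, the factorization property of Definition \ref{Higher-Rank Graph} lets me describe the image explicitly: an infinite path $y$ lies in $\func{\sigma^{np}}{\func{\mathcal{Z}}{\lambda}}$ exactly when there is a $\mu \in v \Lambda^{np - \ell} \func{r}{y}$, for then $x \df (\lambda \mu) y \in \func{\mathcal{Z}}{\lambda}$ satisfies $\func{\sigma^{np}}{x} = y$, and conversely every preimage factors this way. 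Thus $\func{\sigma^{np}}{\func{\mathcal{Z}}{\lambda}} = \Lambda^{\infty}$ if and only if $v \Lambda^{np - \ell} w \neq \varnothing$ for every $w \in \Lambda^{0}$, i.e. every vertex is reachable from $v$ by a path of degree exactly $np - \ell$.

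The crux, and the step I expect to be the main obstacle, is upgrading primitivity, which by Definition \ref{Source-Freeness, Finiteness, Strong Connectivity, and Primitivity} supplies only a \emph{single} witnessing degree $n_{0} \in \N^{k} \setminus \SSet{0}$ with $v' \Lambda^{n_{0}} w' \neq \varnothing$ for all vertices, to connectivity at the prescribed degree $np - \ell$. The idea is to pad: choosing $n$ so large that $np \geq \ell + n_{0}$ (possible because every coordinate of $p$ is at least $1$), put $t \df np - \ell - n_{0} \geq 0$, so $np - \ell = t + n_{0}$. Source-freeness from \eqref{eq:standing-assumption} gives some $\alpha \in v \Lambda^{t}$ with $u \df \func{s}{\alpha}$, primitivity gives some $\beta \in u \Lambda^{n_{0}} w$, and the factorization property makes $\alpha \beta \in v \Lambda^{np - \ell} w$ non-empty for every $w \in \Lambda^{0}$. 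This yields $\func{\sigma^{np}}{\func{\mathcal{Z}}{\lambda}} = \Lambda^{\infty}$ and hence exactness of $\sigma^{p}$. The only routine points left are the explicit image computation via factorization and the verification that gluing a finite path onto an infinite path again yields a $k$-graph morphism $\Omega_{k} \to \Lambda$, both standard facts about $\Lambda^{\infty}$ from \cite{KP1}.
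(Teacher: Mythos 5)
Your proposal is correct and takes essentially the same route as the paper's proof: positive expansivity with $\epsilon = 1$ by shifting the first disagreeing $p$-block to position zero, and exactness by reducing to a cylinder set $\mathcal{Z}(\lambda)$ and padding a source-freeness path of degree $np - d(\lambda) - q$ with a primitivity path of degree $q$ so that $\sigma^{np}(\mathcal{Z}(\lambda)) = \Lambda^{\infty}$. Your direct gluing $x = (\lambda\mu)y$ is precisely the content of Proposition 2.3 of \cite{KP1}, which is what the paper invokes at the same step.
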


\begin{proof}
If $ x,y \in \Lambda^{\infty} $ are distinct, then $ \func{x}{n p,\Br{n + 1} p} \neq \func{y}{n p,\Br{n + 1} p} $ for some $ n \in \N $. From this, one easily verifies that $ \func{\rho_{\Lambda}}{\func{\Br{\sigma^{p}}^{n }}{x},\func{\Br{\sigma^{p}}^{n}}{y}} = 1 $. Hence, $ \sigma^{p} $ is positively expansive.

To prove exactness,  for fixed  $ \lambda \in \Lambda $, we will show that $ \Im{\sigma^{n p}}{\func{\mathcal{Z}}{\lambda}} = \Lambda^{\infty} $ for some $ n \in \N $. For, as  $ \Lambda $ is primitive, there exists  $ q \in \N^{k} \setminus \SSet{0} $ such that $ v \Lambda^{q} w \neq \varnothing $ for all $ v,w \in \Lambda^{0} $. Now choose $ n \in \N $ such that $ \func{d}{\lambda} + q \leq n p $, and $ y \in \Lambda^{\infty} $. As $ \Lambda $ is source-free, there exists   $ \mu \in \func{s}{\lambda} \Lambda^{n p - \func{d}{\lambda} - q} $. Next, for any $ \nu \in \func{s}{\mu} \Lambda^{q} \func{y}{0,0} $,  $ \Trip{\lambda}{\mu}{\nu} $ forms a composable triple. Since $ \func{y}{0,0} = \func{s}{\lambda \mu \nu} $, Proposition 2.3 of \cite{KP1} implies that there exists  $ x \in \Lambda^{\infty} $ such that $ y = \func{\sigma^{\func{d}{\lambda \mu \nu}}}{x} = \func{\sigma^{n p}}{x} $ with
$$
\lambda \mu \nu = \func{x}{0,n p} = \func{x}{0,\func{d}{\lambda}} ~ \func{x}{\func{d}{\lambda},n p - q} ~ \func{x}{n p - q,n p}.
$$
By the factorization property, we get $ \func{x}{0,\func{d}{\lambda}} = \lambda $, so $ x \in \func{\mathcal{Z}}{\lambda} $. Hence, $ y \in \Im{\sigma^{n p}}{\func{\mathcal{Z}}{\lambda}} $, and since $ y \in \Lambda^{\infty} $ is arbitrary, we obtain $ \Im{\sigma^{n p}}{\func{\mathcal{Z}}{\lambda}} = \Lambda^{\infty} $. Therefore, $ \sigma^{n p} $ is exact.
\end{proof}

% ---------------------------------------------------------------------------------------------------------------------------------------------

Let $ \Seq{\varphi_{i}}{i \in \SqBr{k}} $ be a $ k $-tuple of continuous real-valued functions on $ \Lambda^{\infty} $ satisfying the conditions in Definition \ref{The Cocycle Condition} so that if the conditions of Proposition \ref{cor:The Classification Theorem for Continuous Real-Valued 1-Cocycles on Deaconu-Renault Groupoids} and Theorem \ref{The RPF Theorem for Ruelle Dynamical Systems} are satisfied, then $ \Trip{\Lambda^{\infty}}{\sigma}{\varphi} $ will satisfy the conditions in Definition \ref{RPF Ruelle Dynamical System}.

As for Ruelle dynamical systems, there is a version of the RPF Theorem for $ k $-graphs. \\

% ---------------------------------------------------------------------------------------------------------------------------------------------

\begin{Thm} \label{The RPF Theorem for k-Graph Dynamical Systems}
Let $ \Lambda $ be a $ k $-graph satisfying the standing assumptions of  \eqref{eq:standing-assumption}. Assume that $ \varphi = \Seq{\varphi_{i}}{i \in \SqBr{k}} $ is a $ k $-tuple of continuous real-valued  functions on $ \Lambda^{\infty} $ satisfying the cocycle condition, and let $ c_{\varphi} $ denote the associated $ 1 $-cocycle. If there exists a $ p \in \N^{k} $ with $ \Seq{1}{i \in \SqBr{k}} \leq p $ such that $ \func{c_{\varphi}}{p}: \Lambda^\infty \to \R $ is H\"{o}lder-continuous with respect to $ \rho_{\Lambda} $, then the triple $ \Trip{\Lambda^{\infty}}{\sigma}{\varphi} $ satisfies the conditions in Definition \ref{RPF Ruelle Dynamical System}.
\end{Thm}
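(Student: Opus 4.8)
The plan is to reduce to the one-dimensional theory via Theorem \ref{The RPF Theorem for Ruelle Dynamical Systems}, which tells us it suffices to exhibit a single $ n \in \N^{k} \setminus \SSet{0} $ for which the ordinary Ruelle triple $ \Trip{\Lambda^{\infty}}{\sigma^{n}}{\func{c_{\varphi}}{n}} $ satisfies the unique positive eigenvalue condition of Definition \ref{RPF Ruelle Triple}. The natural choice is $ n = p $, the element supplied by the hypothesis. Before applying that theorem I would first record that $ \Trip{\Lambda^{\infty}}{\sigma}{\varphi} $ really is a $ k $-Ruelle dynamical system: $ \Lambda^{\infty} $ is a compact Hausdorff space metrized by $ \rho_{\Lambda} $ (Lemma \ref{A Compatible Ultrametric on the Infinite-Path Space}), the $ \sigma_{i} $ form a commuting $ k $-tuple of surjective local homeomorphisms, and $ \varphi $ satisfies the cocycle condition by assumption, so conditions (1)--(3) of Definition \ref{The Cocycle Condition} together with the cocycle condition all hold. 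In particular $ \func{c_{\varphi}}{p} \in \Cont{\Lambda^{\infty},\R} $ and $ \sigma^{p} $ is a surjective local homeomorphism, so $ \Trip{\Lambda^{\infty}}{\sigma^{p}}{\func{c_{\varphi}}{p}} $ is a legitimate Ruelle triple in the sense of Definition \ref{Ruelle Triple}.

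Next I would verify the hypotheses of Theorem \ref{The Walters Criteria and the Jiang-Ye Criterion Imply the RPF Property} for this triple, i.e., the three conditions of Definition \ref{The Walters Criteria and the Jiang-Ye Conditions}. Since $ \Seq{1}{i \in \SqBr{k}} \leq p $, Lemma \ref{Positive Expansivity and Exactness of a Suitable Power of the Shift} immediately gives that $ \sigma^{p} $ is positively expansive and exact, which are conditions (1) and (2). For condition (3), Bowen's condition, I would invoke Proposition \ref{pos expansive Holder}: $ \sigma^{p} $ is positively expansive and, by hypothesis, $ \func{c_{\varphi}}{p} $ is H\"older-continuous with respect to the compatible metric $ \rho_{\Lambda} $, so Bowen's condition holds for $ \rho_{\Lambda} $ with respect to $ \sigma^{p} $. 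With all three conditions in hand, Theorem \ref{The Walters Criteria and the Jiang-Ye Criterion Imply the RPF Property} shows that $ \Trip{\Lambda^{\infty}}{\sigma^{p}}{\func{c_{\varphi}}{p}} $ satisfies Definition \ref{RPF Ruelle Triple}. Finally, as $ p \neq 0 $, Theorem \ref{The RPF Theorem for Ruelle Dynamical Systems} applies with $ n = p $ and yields that $ \Trip{\Lambda^{\infty}}{\sigma}{\varphi} $ satisfies Definition \ref{RPF Ruelle Dynamical System}, completing the argument.

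This proof is essentially an assembly of results already established, so I do not anticipate a serious analytic obstacle; the one point requiring genuine care is the compatibility of hypotheses across the cited results. Specifically, one must confirm that the metric $ \rho_{\Lambda} $ for which $ \func{c_{\varphi}}{p} $ is assumed H\"older is exactly the compatible metric with respect to which Proposition \ref{pos expansive Holder} delivers Bowen's condition, and that the positive expansivity of $ \sigma^{p} $---which is a topological property, as noted after Definition \ref{The Walters Criteria and the Jiang-Ye Conditions}---is unaffected by the choice of compatible metric, so that Lemma \ref{Positive Expansivity and Exactness of a Suitable Power of the Shift} and the H\"older hypothesis may be combined for the single metric $ \rho_{\Lambda} $.
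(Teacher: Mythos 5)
Your proposal is correct and follows essentially the same route as the paper's proof: the paper likewise combines Lemma \ref{A Compatible Ultrametric on the Infinite-Path Space} and Lemma \ref{Positive Expansivity and Exactness of a Suitable Power of the Shift} with the H\"older hypothesis to verify the conditions of Definition \ref{The Walters Criteria and the Jiang-Ye Conditions} for $\Trip{\Lambda^{\infty}}{\sigma^{p}}{\func{c_{\varphi}}{p}}$, and then concludes via Theorem \ref{The Walters Criteria and the Jiang-Ye Criterion Imply the RPF Property} and Theorem \ref{The RPF Theorem for Ruelle Dynamical Systems} with $n = p$. Your write-up is in fact more explicit than the paper's, which leaves the use of Proposition \ref{pos expansive  Holder} for Bowen's condition implicit.
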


\begin{proof}
By Theorem \ref{The Walters Criteria and the Jiang-Ye Criterion Imply the RPF Property}, Lemma \ref{A Compatible Ultrametric on the Infinite-Path Space}, and Lemma \ref{Positive Expansivity and Exactness of a Suitable Power of the Shift}, the Ruelle triple $ \Trip{\Lambda^{\infty}}{\sigma^{p}}{\func{c_{\varphi}}{p}} $ satisfies the conditions in Definition  \ref{The Walters Criteria and the Jiang-Ye Conditions}, so by Theorem \ref{The RPF Theorem for Ruelle Dynamical Systems} the result follows.
\end{proof}

% ---------------------------------------------------------------------------------------------------------------------------------------------

Note that Proposition \ref{The RPF Theorem for Ruelle Dynamical Systems}, Theorem \ref{thm:RPF-Theorem}, and the hypothesis Theorem \ref{The RPF Theorem for k-Graph Dynamical Systems} will guarantee the existence of a Borel measure $ \mu^{\varphi} $ on $ \Lambda^{\infty} $. We will now establish some useful properties of this measure. \\

% ---------------------------------------------------------------------------------------------------------------------------------------------

\begin{Prop} \label{prop-RPF-k-graphs}
Let $ \Lambda $ be a $ k $-graph satisfying the standing assumptions of  \eqref{eq:standing-assumption}. Suppose that $ \Trip{\Lambda^{\infty}}{\sigma}{\varphi} $, with $ \varphi = \Seq{\varphi_{i}}{i \in \SqBr{k}} $ a $ k $-tuple of  continuous real-valued  functions on $ \Lambda^{\infty} $   satisfying the cocycle condition. Let $ c_{\varphi} $ denote the associated $ 1 $-cocycle. If $ \Trip{\Lambda^{\infty}}{\sigma}{\varphi} $ is a $ k $-Ruelle dynamical system satisfying the conditions in Definition \ref{RPF Ruelle Dynamical System}, then for all $ \lambda \in \Lambda $,
$$
  \func{\mu^{\varphi}}{\ZZintfor{\lambda}}
= \Br{\bm{\lambda}^{\varphi}}^{- \func{d}{\lambda}} \Int{\ZZintfor{\func{s}{\lambda}}}{e^{\func{c_{\varphi}}{\func{d}{\lambda}}}}{\mu^{\varphi}}{x},
$$
where to simplify the notation we denoted by  $ \bm{\lambda}^{\varphi} $ (resp. $\mu^\varphi$) 
the $ k $-tuple of eigenvalues  (resp. the eigenmeasure) associated  to the k-Ruelle dynamical system $ \Trip{\Lambda^{\infty}}{\sigma}{\varphi} $.
\end{Prop}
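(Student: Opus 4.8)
The plan is to apply the eigenmeasure equation of Definition~\ref{RPF Ruelle Dynamical System}, in its integrated form, to the continuous indicator function $1_{\ZZintfor{\lambda}}$ of the cylinder set, using the composite Ruelle operator indexed by $n = \func{d}{\lambda}$, and then to evaluate that operator pointwise via the factorization property of $\Lambda$.

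First I would record the integrated eigenvalue identity for the composite operator. By Theorem~\ref{The Workhorse Lemma}(1) the assignment $n \mapsto \Ruelle{\Lambda^{\infty}}{\sigma^{n}}{\func{c_{\varphi}}{n}}$ is a semigroup homomorphism, and since $\sigma^{\mathbf{e}_{i}} = \sigma_{i}$ and $\func{c_{\varphi}}{\mathbf{e}_{i}} = \varphi_{i}$, this gives $\Ruelle{\Lambda^{\infty}}{\sigma^{n}}{\func{c_{\varphi}}{n}} = \Br{\Ruelle{\Lambda^{\infty}}{\sigma_{1}}{\varphi_{1}}}^{n_{1}} \circ \cdots \circ \Br{\Ruelle{\Lambda^{\infty}}{\sigma_{k}}{\varphi_{k}}}^{n_{k}}$ as commuting operators. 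Applying the dual eigenmeasure equations $\func{\Br{\Ruelle{\Lambda^{\infty}}{\sigma_{i}}{\varphi_{i}}}^{\ast}}{\mu^{\varphi}} = \lambda_{i}^{\varphi} \mu^{\varphi}$ successively then yields $\func{\Br{\Ruelle{\Lambda^{\infty}}{\sigma^{n}}{\func{c_{\varphi}}{n}}}^{\ast}}{\mu^{\varphi}} = \Br{\bm{\lambda}^{\varphi}}^{n} \mu^{\varphi}$, that is, $\Int{\Lambda^{\infty}}{\func{\Ruelle{\Lambda^{\infty}}{\sigma^{n}}{\func{c_{\varphi}}{n}}}{f}}{\mu^{\varphi}} = \Br{\bm{\lambda}^{\varphi}}^{n} \Int{\Lambda^{\infty}}{f}{\mu^{\varphi}}$ for every $f \in \Cont{\Lambda^{\infty},\R}$.

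Next, since $\Lambda^{\infty}$ carries the ultrametric $\rho_{\Lambda}$ of Lemma~\ref{A Compatible Ultrametric on the Infinite-Path Space}, each cylinder set is compact and open, so $1_{\ZZintfor{\lambda}} \in \Cont{\Lambda^{\infty},\R}$ and the identity applies with $f = 1_{\ZZintfor{\lambda}}$ and $n = \func{d}{\lambda}$. The crux is to evaluate $\func{\Ruelle{\Lambda^{\infty}}{\sigma^{\func{d}{\lambda}}}{\func{c_{\varphi}}{\func{d}{\lambda}}}}{1_{\ZZintfor{\lambda}}}$ pointwise: for $x \in \Lambda^{\infty}$, the $\sigma^{\func{d}{\lambda}}$-preimages of $x$ are exactly the concatenations $\eta x$ with $\eta \in \Lambda^{\func{d}{\lambda}}$ and $\func{s}{\eta} = \func{r}{x}$, and by the factorization property (Definition~\ref{Higher-Rank Graph}) exactly one of them, namely $\lambda x$, lies in $\ZZintfor{\lambda}$, and only when $\func{r}{x} = \func{s}{\lambda}$, i.e.\ $x \in \ZZintfor{\func{s}{\lambda}}$. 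Hence the operator sends $1_{\ZZintfor{\lambda}}$ to the function $x \mapsto \func{1_{\ZZintfor{\func{s}{\lambda}}}}{x}\, e^{\FUNC{\func{c_{\varphi}}{\func{d}{\lambda}}}{\lambda x}}$.

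Combining the two steps gives $\Br{\bm{\lambda}^{\varphi}}^{\func{d}{\lambda}} \func{\mu^{\varphi}}{\ZZintfor{\lambda}} = \IInt{\ZZintfor{\func{s}{\lambda}}}{e^{\FUNC{\func{c_{\varphi}}{\func{d}{\lambda}}}{\lambda x}}}{\mu^{\varphi}}{x}$, and dividing by $\Br{\bm{\lambda}^{\varphi}}^{\func{d}{\lambda}}$ produces the claimed formula, where the integrand $e^{\func{c_{\varphi}}{\func{d}{\lambda}}}$ over $\ZZintfor{\func{s}{\lambda}}$ is understood as the cocycle evaluated on the unique preimage $\lambda x \in \ZZintfor{\lambda}$. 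I expect the main obstacle to be this pointwise computation: correctly parametrizing the fiber of $\sigma^{\func{d}{\lambda}}$ over $x$ by the paths in $\Lambda^{\func{d}{\lambda}}$ through the factorization property and isolating the single summand that survives the indicator $1_{\ZZintfor{\lambda}}$; once that is settled, the remaining bookkeeping with the eigenvalue identity is routine.
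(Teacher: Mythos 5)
Your proposal is correct and follows essentially the same route as the paper's proof: both pass to the composite operator $\Ruelle{\Lambda^{\infty}}{\sigma^{\func{d}{\lambda}}}{\func{c_{\varphi}}{\func{d}{\lambda}}}$ (via the semigroup homomorphism of Theorem \ref{The Workhorse Lemma}), integrate $1_{\ZZintfor{\lambda}}$ against the dual eigenmeasure identity, and use the factorization property to isolate $\lambda x$ as the unique preimage in $\ZZintfor{\lambda}$ of each $x \in \ZZintfor{\func{s}{\lambda}}$, the sum vanishing off that cylinder. Your closing remark that the integrand in the statement must be read as $e^{\FUNC{\func{c_{\varphi}}{\func{d}{\lambda}}}{\lambda x}}$ matches the paper's final display exactly.
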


\begin{proof}
Fix an arbitrary $ \lambda \in \Lambda $. For every $ i \in \SqBr{k} $, we have $ \func{\Br{\Ruelle{\Lambda^{\infty}}{\sigma_{i}}{\varphi_{i}}}^{\ast}}{\mu^{\varphi}} = \lambda^{\varphi}_{i}  \mu^{\varphi} $. Hence,
$$
  \func{\Br{\Ruelle{\Lambda^{\infty}}{\sigma^{\func{d}{\lambda}}}{\func{c_{\varphi}}{\func{d}{\lambda}}}}^{\ast}}{\mu^{\varphi}}
= \Br{\bm{\lambda}^{\varphi}}^{\func{d}{\lambda}} \mu^{\varphi}.
$$
so integrating $ 1_{\ZZintfor{\lambda}} \in \Cont{\Lambda^{\infty},\R} $ with respect to the equal measures on the left and right hand side of the above equation and using the definition of 
$ \Br{\Ruelle{\Lambda^{\infty}}{\sigma^{\func{d}{\lambda}}}{\func{c_{\varphi}}{\func{d}{\lambda}}}}^{\ast} $ yields
$$
	\IInt{\Lambda^{\infty}}
	     {\SqBr{\sum_{\substack{y \in \Lambda^{\infty} \\ \func{\sigma^{\func{d}{\lambda}}}{y} = x}} e^{\FUNC{\func{c_{\varphi}}{\func{d}{\lambda}}}{y}} \func{1_{\ZZintfor{\lambda}}}{y}}}
		   {\mu^{\varphi}}{x}
= \Br{\bm{\lambda}^{\varphi}}^{\func{d}{\lambda}} \func{\mu^{\varphi}}{\ZZintfor{\lambda}}.
$$
If $ x \in \Lambda^{\infty} \setminus \ZZintfor{\func{s}{\lambda}} $, then there does not exist a $ y \in \ZZintfor{\lambda} $ such that $ \func{\sigma^{\func{d}{\lambda}}}{y} = x $, so
$$
\sum_{\substack{y \in \Lambda^{\infty} \\ \func{\sigma^{\func{d}{\lambda}}}{y} = x}} e^{\FUNC{\func{c_{\varphi}}{\func{d}{\lambda}}}{y}} \func{1_{\ZZintfor{\lambda}}}{y} = 0.
$$
It  follows that:
$$
	\IInt{\Lambda^{\infty}}
		   {\SqBr{\sum_{\substack{y \in \Lambda^{\infty} \\ \func{\sigma^{\func{d}{\lambda}}}{y} = x}} e^{\FUNC{\func{c_{\varphi}}{\func{d}{\lambda}}}{y}} \func{1_{\ZZintfor{\lambda}}}{y}}}
		   {\mu^{\varphi}}{x}
= \IInt{\ZZintfor{\func{s}{\lambda}}}
		   {\SqBr{\sum_{\substack{y \in \Lambda^{\infty} \\ \func{\sigma^{\func{d}{\lambda}}}{y} = x}} e^{\FUNC{\func{c_{\varphi}}{\func{d}{\lambda}}}{y}} \func{1_{\ZZintfor{\lambda}}}{y}}}
		   {\mu^{\varphi}}{x}.
$$
Given an $ x \in \ZZintfor{\func{s}{\lambda}} $, there exists precisely one $ y \in \ZZintfor{\lambda} $ such that $ \func{\sigma^{\func{d}{\lambda}}}{y} = x $, namely, $ \lambda x $. Consequently,
$$
	\IInt{\ZZintfor{\func{s}{\lambda}}}
	     {\SqBr{\sum_{\substack{y \in \Lambda^{\infty} \\ \func{\sigma^{\func{d}{\lambda}}}{y} = x}}e^{\FUNC{\func{c_{\varphi}}{\func{d}{\lambda}}}{y}} \func{1_{\ZZintfor{\lambda}}}{y}}}
			 {\mu^{\varphi}}{x}
= \IInt{\ZZintfor{\func{s}{\lambda}}}{e^{\FUNC{\func{c_{\varphi}}{\func{d}{\lambda}}}{\lambda x}} \func{1_{\ZZintfor{\lambda}}}{y}}{\mu^{\varphi}}{x}.
$$
Therefore,
$$
	\IInt{\ZZintfor{\func{s}{\lambda}}}{e^{\FUNC{\func{c_{\varphi}}{\func{d}{\lambda}}}{\lambda x}}}{\mu^{\varphi}}{x}
= \Br{\bm{\lambda}^{\varphi}}^{\func{d}{\lambda}} \func{\mu^{\varphi}}{\ZZintfor{\lambda}},
$$
and a simple rearrangement of terms yields the proposition.
\end{proof}

% ----------------------------------------------------------------------------------------------------------------------------------------------

We list an important positivity property of the measure $ \mu^{\varphi} $ in the event that $ \Trip{\Lambda^{\infty}}{\sigma}{\varphi} $ satisfies the conditions in Definition \ref{RPF Ruelle Dynamical System}. \\

% ----------------------------------------------------------------------------------------------------------------------------------------------

\begin{Cor}
Let $ \Lambda $ be a $ k $-graph satisfying the standing assumptions of  \eqref{eq:standing-assumption}. Suppose that $ \Trip{\Lambda^{\infty}}{\sigma}{\varphi} $ is a $ k $-Ruelle dynamical system, which satisfies the conditions  of Definition \ref{RPF Ruelle Dynamical System}. Then $ \func{\mu^{\varphi}}{\ZZintfor{\lambda}} > 0 $ for every $ \lambda \in \Lambda $.
\end{Cor}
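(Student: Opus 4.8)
The plan is to reduce the statement to the special case of vertices and then use primitivity to spread positivity from one vertex of positive measure to all of them. The essential input is the measure formula of Proposition \ref{prop-RPF-k-graphs}, which for any $ \lambda \in \Lambda $ reads
$$
\func{\mu^{\varphi}}{\ZZintfor{\lambda}} = \Br{\bm{\lambda}^{\varphi}}^{- \func{d}{\lambda}} \Int{\ZZintfor{\func{s}{\lambda}}}{e^{\func{c_{\varphi}}{\func{d}{\lambda}}}}{\mu^{\varphi}}{x}.
$$
First I would observe that the scalar $ \Br{\bm{\lambda}^{\varphi}}^{- \func{d}{\lambda}} $ is strictly positive, being a finite product of positive eigenvalues raised to integer powers, and that the integrand $ e^{\func{c_{\varphi}}{\func{d}{\lambda}}} $ is a strictly positive continuous function on the compact set $ \ZZintfor{\func{s}{\lambda}} $, hence bounded below there by some constant $ m > 0 $. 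Thus $ \func{\mu^{\varphi}}{\ZZintfor{\lambda}} \geq \Br{\bm{\lambda}^{\varphi}}^{- \func{d}{\lambda}} m \func{\mu^{\varphi}}{\ZZintfor{\func{s}{\lambda}}} $, so it suffices to prove that $ \func{\mu^{\varphi}}{\ZZintfor{v}} > 0 $ for every vertex $ v \in \Lambda^{0} $.

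Next I would show that at least one vertex carries positive measure. Since every infinite path $ x \in \Lambda^{\infty} $ has a well-defined range vertex $ \func{x}{0,0} \in \Lambda^{0} $, the cylinder sets $ \SSet{\ZZintfor{v}}_{v \in \Lambda^{0}} $ form a finite partition of $ \Lambda^{\infty} $ (finiteness of $ \Lambda^{0} $ follows from $ \Lambda $ being finite). As $ \mu^{\varphi} $ is a probability measure, $ \sum_{v \in \Lambda^{0}} \func{\mu^{\varphi}}{\ZZintfor{v}} = \func{\mu^{\varphi}}{\Lambda^{\infty}} = 1 $, so there exists some $ w \in \Lambda^{0} $ with $ \func{\mu^{\varphi}}{\ZZintfor{w}} > 0 $.

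To finish, I would propagate this positivity to an arbitrary vertex $ v \in \Lambda^{0} $ using the standing primitivity assumption, which in particular implies strong connectivity and hence $ v \Lambda w \neq \varnothing $. Choosing any $ \lambda \in v \Lambda w $, the displayed formula applied to this $ \lambda $ (whose source is $ w $), together with $ \func{\mu^{\varphi}}{\ZZintfor{w}} > 0 $, gives $ \func{\mu^{\varphi}}{\ZZintfor{\lambda}} > 0 $ exactly as in the first paragraph. Since $ \func{r}{\lambda} = v $ forces $ \ZZintfor{\lambda} \subseteq \ZZintfor{v} $, monotonicity of $ \mu^{\varphi} $ yields $ \func{\mu^{\varphi}}{\ZZintfor{v}} > 0 $; as $ v $ was arbitrary, combining this with the reduction of the first paragraph gives $ \func{\mu^{\varphi}}{\ZZintfor{\lambda}} > 0 $ for all $ \lambda \in \Lambda $. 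The only place a genuine structural hypothesis enters is this last step: positivity at a single vertex cannot spread without a connectivity assumption linking all vertices, which is precisely what primitivity (indeed already strong connectivity) supplies, while everything else is bookkeeping with the manifestly positive factors in the formula.
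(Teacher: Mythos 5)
Your proof is correct, and it takes a genuinely different route from the paper's. The paper argues by contradiction: assuming $ \func{\mu^{\varphi}}{\ZZintfor{\lambda}} = 0 $ for some $ \lambda $, it invokes Lemma \ref{Positive Expansivity and Exactness of a Suitable Power of the Shift} --- this is where primitivity and source-freeness enter, through exactness of $ \sigma^{p} $ --- to produce $ n \in \N $ with $ \Im{\sigma^{n p}}{\ZZintfor{\lambda}} = \Lambda^{\infty} $, and then appeals to Proposition 4.2 of \cite{Re1} (quasi-invariance of the eigenmeasure, so that forward images of null sets are null) to conclude $ \func{\mu^{\varphi}}{\Lambda^{\infty}} = 0 $, contradicting that $ \mu^{\varphi} $ is a probability measure. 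You instead argue directly from the integral formula of Proposition \ref{prop-RPF-k-graphs}: since the eigenvalues $ \lambda^{\varphi}_{i} $ are strictly positive and the integrand is a strictly positive continuous function on the compact cylinder $ \ZZintfor{\func{s}{\lambda}} $, you obtain the quantitative bound $ \func{\mu^{\varphi}}{\ZZintfor{\lambda}} \geq C_{\lambda} \, \func{\mu^{\varphi}}{\ZZintfor{\func{s}{\lambda}}} $ with $ C_{\lambda} > 0 $, which reduces the claim to vertices; pigeonhole over the finite partition $ \SSet{\ZZintfor{v}}_{v \in \Lambda^{0}} $ of the probability space $ \Lambda^{\infty} $ yields one vertex of positive measure, and strong connectivity (choosing $ \lambda \in v \Lambda w $ and using $ \ZZintfor{\lambda} \subseteq \ZZintfor{v} $) spreads positivity to every vertex. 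All steps check out: the hypotheses of Proposition \ref{prop-RPF-k-graphs} match those of the corollary, and $ \Lambda^{0} $ is finite and the $ \ZZintfor{v} $ are pairwise disjoint with union $ \Lambda^{\infty} $, as you use. Your route buys self-containedness (no appeal to the external Renault non-singularity result), explicit lower bounds on cylinder measures, and the observation that at this step only strong connectivity --- not full primitivity or exactness --- is needed, since the existence of the eigenmeasure is already hypothesized. The paper's route is shorter given the lemmas it has in place and isolates a more general mechanism, namely that exactness plus quasi-invariance forces a quasi-invariant probability measure to have full support, independently of the specific formula of Proposition \ref{prop-RPF-k-graphs}.
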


\begin{proof}
Suppose by way of contradiction that there exists $ \lambda \in \Lambda $ such that $ \func{\mu^{\varphi}}{\ZZintfor{\lambda}} = 0 $. By Lemma \ref{Positive Expansivity and Exactness of a Suitable Power of the Shift}, if $ \Seq{1}{i \in \SqBr{k}} \leq p $, then $ \sigma^{p} $ is positively expansive and exact. But then there exists $ n \in \N $ such that $ \Im{\sigma^{n p}}{\ZZintfor{\lambda}} = \Lambda^{\infty} $. We now use Proposition 4.2 of \cite{Re1} again to deduce that
$$
  \func{\mu^{\varphi}}{\Lambda^{\infty}}
= \func{\mu^{\varphi}}{\Im{\sigma^{n p}}{\ZZintfor{\lambda}}}
=	\func{\mu^{\varphi}}{\ZZintfor{\lambda}}
= 0.
$$
Since  $ \func{\mu^{\varphi}}{\Lambda^{\infty}} = 1 $, this gives us a contradiction.
\end{proof}

% ----------------------------------------------------------------------------------------------------------------------------------------------

Recall that if $ \Lambda $ is a $ k $-graph, and $ H $ is an abelian group, a map $ h: \Lambda \to H $ is called a \emph{categorical $ 1 $-cocycle} if $ \func{h}{\lambda \mu} = \func{h}{\lambda} + \func{h}{\mu} $ whenever $ \Pair{\lambda}{\mu} $ is composable. In the case when $ H = \R $ and the image of $ h $ lies entirely inside the nonnegative real numbers, $ h $ was called an ``$ \R^{+} $-functor'' in \cite{FGLP}.

Next, observe that, if $ h $ is a categorical $ 1 $-cocycle taking values in $ \R $, then a routine calculation shows that the $ k $-tuple of functions $ \varphi^{h,\theta} \df \Seq{\varphi^{h,\theta}_{i}}{i \in \SqBr{k}} $, where $ \varphi^{h,\theta}_{i}: \Lambda^{\infty} \to \R $ is defined for all $ i \in \SqBr{k} $ and $ x \in \Lambda^{\infty} $ by
\begin{equation}\label{def:def of theta h cocycle}
\func{\varphi^{h,\theta}_{i}}{x} = - \theta \func{h}{\func{x}{0,\mathbf{e}_{i}}},
\end{equation}
satisfy the cocycle condition and therefore determine a groupoid $ 1 $-cocycle on $ \DRG{\Lambda^{\infty}}{\sigma} $ taking values in $ \R $ by Lemma \ref{The Workhorse Lemma}; we will call this cocycle $ c_{\varphi^{h,\theta}} $. Hence, $ \Trip{\Lambda^{\infty}}{\sigma}{c_{\varphi^{h,\theta}}} $ is a $ k $-Ruelle dynamical system.

Example \ref{ex:tensor-product-Cuntz-algebra} has shown that an automorphism group on a $ C^{\ast} $-algebra coming from $ \Trip{X}{\sigma}{\varphi} $ need not have a KMS state. However, by using Corollary \ref{Existence of KMS States for Generalized Gauge Dynamics of RPF Ruelle Dynamical Systems} and other results, we can construct a new cocycle from $ \varphi $ giving rise to a different dynamics for which a KMS state does exist. The following theorem was first proved in a different way in Proposition 4.4 in \cite{FGLP}. \\

% ---------------------------------------------------------------------------------------------------------------------------------------------

\begin{Thm}[\cite{FGLP}] \label{KMS States for Generalized Gauge Dynamics of k-Graph Dynamical Systems Corresponding to R+-Functors}
Let $ \Lambda $ be a $ k $-graph satisfying the standing assumptions of  \eqref{eq:standing-assumption}. Let $ h: \Lambda \to \R $ be a nonnegative categorical $ 1 $-cocycle, and let $ \theta  $ be a positive real number. Let $ \varphi^{h,\theta} = \Seq{\varphi^{h,\theta}_i}{i \in \SqBr{k}} $ be  as defined in Equation \eqref{def:def of theta h cocycle}. Then $ \Trip{\Lambda^{\infty}}{\sigma}{\varphi^{h,\theta}} $ is a $ k $-Ruelle dynamical system that satisfies the conditions in Definition \ref{RPF Ruelle Dynamical System}, and for each $ \beta \in \R\setminus \{0\} $, so does
$$
\Trip{\Lambda^{\infty}}{\sigma}{\Seq{\frac{1}{\beta} \Br{\func{\ln}{\lambda^{\varphi^{h,\theta}}_{i}} - \varphi^{h,\theta}_{i}}}{i \in \SqBr{k}}}.
$$
The associated generalized state $ \omega $ on $ \CstarRed{\DRG{\Lambda^{\infty}}{\sigma}} \cong \Cstar{\Lambda} $ uniquely determined by
$$
\forall f \in \Cc{\DRG{\Lambda^{\infty}}{\sigma}}: \qquad
\func{\omega}{f} = \Int{\Lambda^{\infty}}{\func{f}{x,0,x}}{\mu^{\varphi^{h,\theta}}}
$$
is a $ \KMS{\beta} $-state for the dynamics determined by the cocycle $ \varsigma $ given by
$$
\varsigma \df \Seq{\frac{1}{\beta} \Br{\func{\ln}{\lambda^{\varphi^{h,\theta}}_{i}} - \varphi^{h,\theta}_{i}}}{i \in \SqBr{k}};
$$
moreover, $ \mu^{\varsigma} = \mu^{\varphi^{h,\theta}} $.
\end{Thm}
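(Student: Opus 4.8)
The plan is to recognize that this theorem is largely an assembly of the machinery developed earlier, with essentially one genuine computation to carry out. The discussion preceding the statement already records that the $k$-tuple $\varphi^{h,\theta}$ satisfies the cocycle condition, so $\Trip{\Lambda^{\infty}}{\sigma}{\varphi^{h,\theta}}$ is a $k$-Ruelle dynamical system and, via Theorem \ref{The Workhorse Lemma}, determines the groupoid $1$-cocycle $c_{\varphi^{h,\theta}}$. What remains for the first assertion is to verify the unique positive eigenvalue condition of Definition \ref{RPF Ruelle Dynamical System}. For this I would apply Theorem \ref{The RPF Theorem for k-Graph Dynamical Systems}, whose only hypothesis (beyond the standing assumptions and the cocycle condition) is that $\func{c_{\varphi^{h,\theta}}}{p}$ be H\"{o}lder-continuous with respect to $\rho_{\Lambda}$ for some $p$ with $\Seq{1}{i \in \SqBr{k}} \leq p$.

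The crux is therefore computing $\func{c_{\varphi^{h,\theta}}}{p}$. Using the explicit formula of Theorem \ref{prop:explicit description fo cocycles associated to cocycle condition} together with the defining identity $\func{\varphi^{h,\theta}_{i}}{x} = - \theta \func{h}{\func{x}{0,\mathbf{e}_{i}}}$ and the fact that $h$ is a categorical $1$-cocycle, I expect each summand $\func{\varphi^{h,\theta}_{j}}{\func{\sigma^{m}}{x}}$ to match the edge $\func{x}{m,m + \mathbf{e}_{j}}$, so that the staircase sum telescopes, by the cocycle property of $h$ and the factorization property, into $\FUNC{\func{c_{\varphi^{h,\theta}}}{p}}{x} = - \theta \func{h}{\func{x}{0,p}}$. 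The decisive point is that this value depends only on the initial segment $\func{x}{0,p}$. Choosing the $p$ that appears in $\rho_{\Lambda}$ (Lemma \ref{A Compatible Ultrametric on the Infinite-Path Space}), we have $\func{\rho_{\Lambda}}{x,y} < 1$ precisely when $\func{x}{0,p} = \func{y}{0,p}$, so $\func{c_{\varphi^{h,\theta}}}{p}$ is constant on each ball of radius less than $1$; as $\Lambda^{p}$ is finite it takes only finitely many values, and such a locally constant function on this ultrametric space is automatically H\"{o}lder-continuous. This is the only step requiring real work, and I expect the bookkeeping in the telescoping sum --- correctly tracking the shifts $\func{\sigma^{m}}{x}$ across the several coordinate directions --- to be the main obstacle.

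With the first assertion in hand, the rest is pure invocation. For each $\beta \in \R \setminus \SSet{0}$, Corollary \ref{cor:Existence of KMS States for Generalized Gauge Dynamics of RPF Ruelle Dynamical Systems}, applied to $\Trip{\Lambda^{\infty}}{\sigma}{\varphi^{h,\theta}}$, yields at once that the rescaled system $\Trip{\Lambda^{\infty}}{\sigma}{\varsigma}$ with $\varsigma_{i} = \frac{1}{\beta}\Br{\func{\ln}{\lambda^{\varphi^{h,\theta}}_{i}} - \varphi^{h,\theta}_{i}}$ again satisfies Definition \ref{RPF Ruelle Dynamical System}, that $\mu^{\varsigma} = \mu^{\varphi^{h,\theta}}$, and that $\mu^{\varphi^{h,\theta}}$ is quasi-invariant for $\Pair{\Lambda^{\infty}}{\sigma}$ with continuous Radon-Nikodym derivative $e^{- \beta c_{\Lambda^{\infty},\sigma,\varsigma}}$. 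Feeding this quasi-invariant measure and its derivative into Proposition \ref{Constructing KMS States from Quasi-Invariant Measures} then produces the desired $\KMS{\beta}$-state $\omega$, uniquely determined by $\func{\omega}{f} = \Int{\Lambda^{\infty}}{\func{f}{x,0,x}}{\mu^{\varphi^{h,\theta}}}$ for $f \in \Cc{\DRG{\Lambda^{\infty}}{\sigma}}$, which completes the proof.
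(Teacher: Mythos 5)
Your proposal is correct and follows essentially the same route as the paper: verify the H\"{o}lder hypothesis of Theorem \ref{The RPF Theorem for k-Graph Dynamical Systems} by showing $\func{c_{\varphi^{h,\theta}}}{p}$ depends only on the initial segment $\func{x}{0,p}$ (your telescoped identity $\FUNC{\func{c_{\varphi^{h,\theta}}}{p}}{x} = -\theta \func{h}{\func{x}{0,p}}$ is exactly the ``straightforward calculation'' the paper leaves implicit and later uses in Corollary \ref{cor:k-graph-cor}), then invoke Corollary \ref{cor:Existence of KMS States for Generalized Gauge Dynamics of RPF Ruelle Dynamical Systems} and Proposition \ref{Constructing KMS States from Quasi-Invariant Measures}. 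Your locally-constant-implies-H\"{o}lder argument via the ultrametric is a harmless variant of the paper's explicit Lipschitz estimate $\Abs{\func{f}{x}-\func{f}{y}} \leq N\Br{M-m}\func{\rho_{\Lambda}}{x,y}$, and you even cite the correct $\beta$-dependent corollary where the paper's text nominally cites the $\beta=1$ version.
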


\begin{proof}
Fix an arbitrary $ n \in \N^{k} $. We will first prove that the function $ f: \Lambda^{\infty} \to \R $ defined by, for all $x \in \Lambda^{\infty}$: 
$$
\func{f}{x} \df \func{h}{\func{x}{0,n}}
$$
is H\"{o}lder-continuous with respect to $ \rho_{\Lambda} $. As $ \Lambda^{n} $ is finite, $ h $ clearly achieves both a minimum value $ m $ and a maximum value $ M $ on $ \Lambda^{n} $. Choose $ N \in \N $ such that $ n \leq N p $, with $ \Seq{1}{i \in \SqBr{k}} \leq p $. For $ x,y \in \Lambda^{\infty} $ such that $ \func{\rho_{\Lambda}}{x,y} < \frac{1}{2^{N}} $, then for all $ j \in \SqBr{N} $,
$$
\func{x}{j p,\Br{j + 1} p} = \func{y}{j p,\Br{j + 1} p},
$$
so $ \func{x}{0,N p} = \func{y}{0,N p} $, which yields $ \func{x}{0,n} = \func{y}{0,n} $ by the factorization property. Consequently, for all $x,y \in \Lambda^{\infty}$:
$$
     \Abs{\func{f}{x} - \func{f}{y}}
=    \Abs{\func{h}{\func{x}{0,n}} - \func{h}{\func{y}{0,n}}}
\leq N \Br{M - m} \func{\rho_{\Lambda}}{x,y}.
$$
As $ n \in \N^{k} $ is arbitrary, it follows that $ \varphi^{h,\theta}_{i} $ is H\"{o}lder-continuous with respect to $ \rho_{\Lambda} $ for each $ i \in \SqBr{k} $.

A straightforward calculation demonstrates that $ \func{c_{\varphi^{h,\theta}}}{p} $ is H\"{o}lder-continuous with respect to $ \rho_{\Lambda} $, therefore by applying Theorem \ref{The RPF Theorem for k-Graph Dynamical Systems}, we conclude that $ \Trip{\Lambda^{\infty}}{\sigma}{\varphi^{h,\theta}} $ satisfies the conditions in Definition \ref{RPF Ruelle Dynamical System}.
			
Applying Theorem \ref{The RPF Theorem for k-Graph Dynamical Systems} we conclude that $ \Trip{\Lambda^{\infty}}{\sigma}{\varphi^{h,\theta}} $ satisfies the conditions in Definition \ref{RPF Ruelle Dynamical System}.

It now follows from Corollary \ref{Existence of KMS States for Generalized Gauge Dynamics of RPF Ruelle Dynamical Systems} that
$$
\Trip{\Lambda^{\infty}}{\sigma}{\Seq{\frac{1}{\beta} \Br{\func{\ln}{\lambda^{\varphi^{h,\theta}}_{i}} - \varphi^{h,\theta}_{i}}}{i \in \SqBr{k}}}
$$
also satisfies the conditions in Definition \ref{RPF Ruelle Dynamical System}, with $ \mu^{\varsigma} = \mu^{\varphi^{h,\theta}} $ by Corollary \ref{Existence of KMS States for Generalized Gauge Dynamics of RPF Ruelle Dynamical Systems}. By Proposition \ref{Constructing KMS States from Quasi-Invariant Measures}, the state $ \omega $ on $ \Cstar{\DRG{\Lambda^{\infty}}{\sigma}} \cong \Cstar{\Lambda} $ that is uniquely determined by, for all $f \in \Cc{\DRG{\Lambda^{\infty}}{\sigma}}$:
$$
\func{\omega}{f} = \Int{\Lambda^{\infty}}{\func{f}{x,0,x}}{\mu^{\varphi^{h,\theta}}},
$$
with notation as in Proposition \ref{prop-RPF-k-graphs}, 
is a $ \KMS{\beta} $-state for the generalized gauge dynamics of this particular dynamical system.
\end{proof}

% ---------------------------------------------------------------------------------------------------------------------------------------------

The following corollary gives more information about the eigenmeasure $ \mu^{\varphi^{h,\theta}} $ and relates it to the eigenvalues of the Ruelle-Perron-Frobenius operator. \\

% ---------------------------------------------------------------------------------------------------------------------------------------------

\begin{Cor} \label{cor:k-graph-cor}
Let $ \Lambda $ be a $ k $-graph satisfying the standing assumptions of \eqref{eq:standing-assumption}, and let $ h: \Lambda \to \R_{\geq 0} $, $ \theta \in \R \setminus \{0\} $, and $ \varphi^{h,\theta} $ be as in Theorem \ref{KMS States for Generalized Gauge Dynamics of k-Graph Dynamical Systems Corresponding to R+-Functors}; let $ c_{\varphi^{h,\theta}} $ denote the associated cocycle. Then for all $ \lambda \in \Lambda $,
$$
  \func{\mu^{\varphi^{h,\theta}}}{\ZZintfor{\lambda}}
= \Br{\bm{\lambda}^{\varphi^{h,\theta}}}^{- \func{d}{\lambda}} e^{- \theta \func{h}{\lambda}} \func{\mu^{\varphi^{h,\theta}}}{\ZZintfor{\func{s}{\lambda}}},
$$
where $ \varphi^{h,\theta} $ is the $ k $-tuple of elements of  $ \Cont{\Lambda^{\infty},\R} $ that is defined for all $ i \in \SqBr{k} $ and $ x \in \Lambda^{\infty} $ by
$$
\func{\varphi^{h,\theta}_{i}}{x} \df - \theta \func{h}{\func{x}{0,\mathbf{e}_{i}}}.
$$
\end{Cor}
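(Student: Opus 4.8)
The plan is to specialize Proposition \ref{prop-RPF-k-graphs} to the $ k $-tuple $ \varphi^{h,\theta} $ and then evaluate the resulting integrand explicitly. Reading off the last displayed identity in the proof of Proposition \ref{prop-RPF-k-graphs} (where the integrand is the cocycle evaluated at the concatenation $ \lambda x $, not merely at $ x $), we have
$$
\func{\mu^{\varphi^{h,\theta}}}{\ZZintfor{\lambda}} = \Br{\bm{\lambda}^{\varphi^{h,\theta}}}^{- \func{d}{\lambda}} \IInt{\ZZintfor{\func{s}{\lambda}}}{e^{\FUNC{\func{c_{\varphi^{h,\theta}}}{\func{d}{\lambda}}}{\lambda x}}}{\mu^{\varphi^{h,\theta}}}{x}.
$$
Thus the entire content of the corollary reduces to the claim that $ \FUNC{\func{c_{\varphi^{h,\theta}}}{\func{d}{\lambda}}}{\lambda x} = - \theta \func{h}{\lambda} $ for every $ x \in \ZZintfor{\func{s}{\lambda}} $, and in particular that this value is independent of $ x $. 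Granting this, the integrand is the constant $ e^{- \theta \func{h}{\lambda}} $, which factors out of the integral and leaves $ \func{\mu^{\varphi^{h,\theta}}}{\ZZintfor{\func{s}{\lambda}}} $, yielding the stated formula.

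To evaluate the cocycle I would first unwind $ \func{c_{\varphi^{h,\theta}}}{\func{d}{\lambda}} $ as a telescoping sum of edge contributions. Put $ n \df \func{d}{\lambda} $ and fix any ordering $ n = \mathbf{e}_{i_{1}} + \cdots + \mathbf{e}_{i_{\Abs{n}}} $ with partial sums $ p_{j} = \mathbf{e}_{i_{1}} + \cdots + \mathbf{e}_{i_{j}} $, so that $ p_{0} = 0 $ and $ p_{\Abs{n}} = n $. Decomposing the groupoid element $ \Trip{\lambda x}{n}{\func{\sigma^{n}}{\lambda x}} $ into the product of its $ \Abs{n} $ edges and applying the groupoid cocycle identity of Proposition \ref{cor:The Classification Theorem for Continuous Real-Valued 1-Cocycles on Deaconu-Renault Groupoids} (equivalently, the explicit semigroup formula of Theorem \ref{prop:explicit description fo cocycles associated to cocycle condition}) gives
$$
\FUNC{\func{c_{\varphi^{h,\theta}}}{n}}{\lambda x} = \sum_{j = 1}^{\Abs{n}} \func{\varphi^{h,\theta}_{i_{j}}}{\func{\sigma^{p_{j - 1}}}{\lambda x}}.
$$
Using $ \func{\varphi^{h,\theta}_{i}}{z} = - \theta \func{h}{\func{z}{0,\mathbf{e}_{i}}} $ together with the shift formula $ \FUNC{\func{\sigma^{l}}{z}}{a,b} = \func{z}{a + l,b + l} $, each summand equals $ - \theta \func{h}{\func{\Br{\lambda x}}{p_{j - 1},p_{j}}} $.

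The last step invokes that $ h $ is a categorical $ 1 $-cocycle. The morphisms $ \func{\Br{\lambda x}}{p_{j - 1},p_{j}} $ for $ j = 1,\ldots,\Abs{n} $ are precisely the consecutive degree-$ \mathbf{e}_{i_{j}} $ factors of $ \func{\Br{\lambda x}}{0,n} = \func{\Br{\lambda x}}{0,\func{d}{\lambda}} = \lambda $; by the factorization property they compose, in this order, to $ \lambda $. Repeated application of $ \func{h}{\mu \nu} = \func{h}{\mu} + \func{h}{\nu} $ then collapses the sum to $ \sum_{j} \func{h}{\func{\Br{\lambda x}}{p_{j - 1},p_{j}}} = \func{h}{\lambda} $, so $ \FUNC{\func{c_{\varphi^{h,\theta}}}{n}}{\lambda x} = - \theta \func{h}{\lambda} $, manifestly independent of $ x $. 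This is exactly the claim needed in the first paragraph, finishing the proof. The only real obstacle is the bookkeeping of the telescoping decomposition: one must check that the edge-by-edge evaluation of $ \varphi^{h,\theta} $ along the shifted path reassembles into the degree-$ \mathbf{e}_{i_{j}} $ factors of $ \lambda $ in the correct order. Once the shift formula and the factorization property are lined up, the categorical cocycle property of $ h $ does the rest, with no measure-theoretic subtlety beyond pulling a constant through the integral.
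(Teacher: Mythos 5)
Your proposal is correct and follows essentially the same route as the paper's own proof: both specialize Proposition \ref{prop-RPF-k-graphs}, observe that the integrand $e^{\FUNC{\func{c_{\varphi^{h,\theta}}}{\func{d}{\lambda}}}{\lambda x}}$ is the constant $e^{- \theta \func{h}{\lambda}}$ independent of $x$, and pull it out of the integral over $\ZZintfor{\func{s}{\lambda}}$. The one place you go beyond the paper is that where it merely asserts the closed formula $\FUNC{\func{c_{\varphi^{h,\theta}}}{n}}{x} = - \theta \func{h}{\func{x}{0,n}}$ as already known, you actually derive it --- via the telescoping edge decomposition from Theorem \ref{prop:explicit description fo cocycles associated to cocycle condition}, the shift formula, the factorization property, and the categorical cocycle identity for $h$ --- and your reading of the integrand in Proposition \ref{prop-RPF-k-graphs} as evaluated at the concatenation $\lambda x$ rather than at $x$ is the correct resolution of the ambiguity in that proposition's statement.
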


\begin{proof}
We already know from Theorem 7.9 that for all $ n \in \N^{k} $ and $ x \in \Lambda^{\infty} $,
$$
\func{c_{\varphi^{h,\theta}}}{n} = - \theta \func{h}{\func{x}{0,n}}.
$$
Hence, for every $ \lambda \in \Lambda $, if $ x \in \ZZintfor{\func{s}{\lambda}} $, we have:
$$
	\FUNC{\func{c_{\varphi^{h,\theta}}}{\func{d}{\lambda}}}{\lambda x}
= - \theta \func{h}{\Func{\lambda x}{0,\func{d}{\lambda}}}
= - \theta \func{h}{\lambda}.
$$
Consequently, by Proposition \ref{prop-RPF-k-graphs}, for all $ \lambda \in \Lambda $,
\begin{align*}
		\func{\mu^{\varphi^{h,\theta}}}{\ZZintfor{\lambda}}
& = \Br{\bm{\lambda}^{\varphi^{h,\theta}}}^{- \func{d}{\lambda}}
    \IInt{\ZZintfor{\func{s}{\lambda}}}{e^{\FUNC{\func{c_{\varphi^{h,\theta}}}{\func{d}{\lambda}}}{\lambda x}}}{\mu^{\varphi^{h,\theta}}}{x} \\
& = \Br{\bm{\lambda}^{\varphi^{h,\theta}}}^{- \func{d}{\lambda}} \IInt{\ZZintfor{\func{s}{\lambda}}}{e^{- \theta \func{h}{\lambda}}}{\mu^{\varphi^{h,\theta}}}{x} \\
& = \Br{\bm{\lambda}^{\varphi^{h,\theta}}}^{- \func{d}{\lambda}} e^{- \theta \func{h}{\lambda}} \IInt{\ZZintfor{\func{s}{\lambda}}}{1}{\mu^{\varphi^{h,\theta}}}{x} \\
& = \Br{\bm{\lambda}^{\varphi^{h,\theta}}}^{- \func{d}{\lambda}} e^{- \theta \func{h}{\lambda}} \func{\mu^{\varphi^{h,\theta}}}{\ZZintfor{\func{s}{\lambda}}}.
\end{align*}
The corollary is therefore proven.
\end{proof}

%%%%%%%%%%%%%%%%%%%%%%%%%%%%%%%%%%%%%%%%%%%%%%%%%%%%%%%%%%%%%%%%%%%%%%%%%%%%%%%%%%%%%%%%%%%%%%%%%%%%%%%%%%%%%%%%%%%%%%%%%%%%%%%%%%%%%%%%%%%%%%%
% REFERENCES %%%%%%%%%%%%%%%%%%%%%%%%%%%%%%%%%%%%%%%%%%%%%%%%%%%%%%%%%%%%%%%%%%%%%%%%%%%%%%%%%%%%%%%%%%%%%%%%%%%%%%%%%%%%%%%%%%%%%%%%%%%%%%%%%%
%%%%%%%%%%%%%%%%%%%%%%%%%%%%%%%%%%%%%%%%%%%%%%%%%%%%%%%%%%%%%%%%%%%%%%%%%%%%%%%%%%%%%%%%%%%%%%%%%%%%%%%%%%%%%%%%%%%%%%%%%%%%%%%%%%%%%%%%%%%%%%%

%%%%%%%%%%%%%%%%%%%%%%%%%%%%%%%%%%%%%%%%%%%%%%%%%%%%%%%%%%%%%%%%%%%%%%%%%%%%%%%%%%%%%%%%%%%%%%%%%%%%%%%%%%%%%%%%%%%%%%%%%%%%%%%%%%%%%%%%%%%%%%%
% END DOCUMENT %%%%%%%%%%%%%%%%%%%%%%%%%%%%%%%%%%%%%%%%%%%%%%%%%%%%%%%%%%%%%%%%%%%%%%%%%%%%%%%%%%%%%%%%%%%%%%%%%%%%%%%%%%%%%%%%%%%%%%%%%%%%%%%%
%%%%%%%%%%%%%%%%%%%%%%%%%%%%%%%%%%%%%%%%%%%%%%%%%%%%%%%%%%%%%%%%%%%%%%%%%%%%%%%%%%%%%%%%%%%%%%%%%%%%%%%%%%%%%%%%%%%%%%%%%%%%%%%%%%%%%%%%%%%%%%%


\begin{thebibliography}{10}

\bibitem{ALRS}
A. an Huef, M. Laca, I. Raeburn \& A. Sims,
\emph{KMS states on the $ C^{\ast} $-Algebra of a Higher-Rank Graph and Periodicity in the Path Space},
J. Funct. Anal., \textbf{268} (2015),  1840--1875.

\bibitem{BEFT}
R. Bissacot, R. Exel, R. Frausino \& T. Raszeja, 
\emph{Conformal Measures on Generalized Renault-Deaconu Groupoids}, 
Mathematics ArXiv 2018, arXiv:1808.00765.

\bibitem{Bow}
R. Bowen,
\emph{Some Systems with Unique Equilibrium States},
Math. Systems Theory \textbf{8} (1974), 193--202.

\bibitem{CFV1}
M. Carvalho, F. Rodrigues, \& P. Varandas,
\emph{Semigroup Actions of Expanding Maps},
J. Stat. Phys., \textbf{166} (2017), 114--136.

\bibitem{CFV2}
M. Carvalho, F. Rodrigues, \& P. Varandas,
\emph{A variational principle for free semigroup actions},
Advances in Math., \textbf{334} (2018), 450--487.

\bibitem{C}
J. Christensen,
\emph{Symmetries of the KMS Simplex},
Comm. Math. Phys., \textbf{364} (2018),  357--383.

\bibitem{D}
V. Deaconu,
\emph{Groupoids Associated with Endomorphisms},
Trans. Amer. Math. Soc., \textbf{347} (1995),  1779--1786.

\bibitem{MDK}
V. Deaconu, A. Kumjian, \& P. Muhly,
\emph{Cohomology of topological graphs and Cuntz-Pimsner algebras},
J. Operator Theory \textbf{46} (2001),  251--264.

\bibitem{Exel}
R. Exel,
\emph{KMS states for Generalized Gauge Actions on Cuntz-Krieger Algebras (An Application of the Ruelle-Perron-Frobenius Theorem)},
Bull. Braz. Math. Soc. (N.S.) \textbf{35} (2004),  1--12.
		
\bibitem{ER}
R. Exel \& J. Renault,
\emph{Semigroups of local homeomorphisms and interaction groups},
Ergodic Theory Dynam. Systems \textbf{27} (2007),  1737--1771.

\bibitem{FGJKP}
C. Farsi, E. Gillaspy, A. Julien, S. Kang \& J. Packer,
\emph{Spectral triples and wavelets for higher-rank graphs},
J. Math. Anal. Appl. \textbf{482} (2020),  123572, 39 pages.

\bibitem{FGLP}
C. Farsi, E. Gillaspy, N. Larsen \& J. Packer,
\emph{Generalized Gauge Actions on $ k $-Graph $ C^{\ast} $-Algebras: KMS States and Hausdorff Structure},
arXiv:1807.08665. To appear in Indiana University Mathematics Journal.

\bibitem{IK}
M. Ionescu \& A. Kumjian,
\emph{Hausdorff Measures and KMS States},
Indiana University Mathematics Journal,
\textbf{62} (2013), 443--463.

\bibitem{JY}
Y. Jiang \& Y.-L. Ye,
\emph{Ruelle Operator Theorem for Non-Expansive Systems},
Ergodic Theory Dynam. Systems, \textbf{30} (2010),  469--487.

\bibitem{KP1}
A. Kumjian \& D. Pask,
\emph{Higher Rank Graph $ C^{\ast} $-Algebras},
New York J. Math., \textbf{6} (2000), 1--20.
		
\bibitem{KP2}
A. Kumjian \& D. Pask,
\emph{Actions of $ \Z^{k} $ Associated to Higher Rank Graphs},
Ergodic Theory Dynam. Systems, \textbf{23} (2003),  1153--1172.

\bibitem{KR}
A. Kumjian \& J. Renault,
\emph{KMS States on $ C^{\ast} $-Algebras Associated to Expansive Maps},
Proc. Amer. Math. Soc., \textbf{134} (2006),  2067--2078.

\bibitem{McN}
R. McNamara,
\emph{KMS states of graph algebras with a generalised gauge dynamics},
Ph.D. thesis, University of Otago, 2015.

\bibitem{Mi}
E. Mihaelescu,
\emph{Higher dimensional expanding maps and toral extensions},
Proc. Amer. Math. Soc., \textbf{141} (2013),  3467--3475.

\bibitem{N}
S. Neshveyev,
\emph{KMS States on the $ C^{\ast} $-Algebras of Non-Principal Groupoids},
J. Operator Theory, \textbf{70} (2013),  513--530.

\bibitem{OP} D. Olesen \& K. Pedersen,
\emph{Some $ C^{\ast} $-dynamical systems with a single KMS state},
Math. Scand. \textbf{42} (1978) 111--118.

\bibitem{Reddy}
W. Reddy, \emph{Expanding Maps on Compact Metric Spaces},
Topology Appl. \textbf{13} (1982), 327--334.

\bibitem{Re}
J. Renault,
\emph{A Groupoid Approach to $ C^{\ast} $-Algebras},
Lecture Notes in Mathematics, \textbf{793}, Springer, Berlin (1980).

\bibitem{ReClA}
J. Renault,
\emph{Cuntz-like algebras},
Operator theoretical methods (Timi\c{s}oara, 1998), 371--386, Theta Found., Bucharest, 2000.

\bibitem{Re1}
J. Renault,
\emph{AF equivalence relations and their cocycles},
Proceedings Operator and Mathematical Physics Conference, 2003, Constanza, Romania, pp.365--377, 2003.

\bibitem{Re2}
J. Renault,
\emph{The Radon-Nikodym problem for approximately proper equivalence relations},
Ergod. Th. \& Dynam. Sys. (2005), \textbf{25}, 1643--1672.

\bibitem{RSWY}
J. Renault, A. Sims, D. Williams \& T. Yeend,
\emph{Uniqueness Theorems for Topological Higher-Rank Graph $ C^{\ast} $-Algebras},
Proc. Amer. Math. Soc., \textbf{146} (2018),  669--684.

\bibitem{RWA}
J. N. Renault and D. P. Williams,
\emph{Amenability of groupoids arising from partial semigroup actions and topological higher-rank graphs},
Trans. Amer. Math. Soc., \textbf{369} (2017), 2255--2283.

\bibitem{Ru1}
D. Ruelle,
\emph{Statistical Mechanics of a One-Dimensional Lattice Gas},
Comm. Math. Phys., \textbf{9} (1968), 267--278.

\bibitem{Ru2}
D. Ruelle,
\emph{A Measure Associated with Axiom A Attractors},
Amer. J. Math., \textbf{98} (1976),  619--654.

\bibitem{Sh}
M. Shub,
\emph{Endomorphisms of Compact Differentiable Manifolds},
Amer. J. Math., \textbf{91} (1969),  175--199.

\bibitem{Tho}
K. Thomsen,
\emph{KMS weights on groupoid and graph $C^*$-algebras},
J. Funct. Anal., \textbf{266} (2014), 2959--2988.

\bibitem{W1}
P. Walters,
\emph{Ruelle's operator theorem and $ g $-measures},
Trans. Amer. Math. Soc. \textbf{214} (1975), 375--387.

\bibitem{W}
P. Walters,
\emph{Invariant Measures and Equilibrium States for Some Mappings which Expand Distances},
Trans. Amer. Math. Soc., \textbf{236} (1978), 121--153.

\bibitem{W2}
P. Walters,
\emph{Convergence of the Ruelle Operator for a Function Satisfying Bowen's Condition},
Trans. Amer. Math. Soc., \textbf{353} (2001), 327--347.

\bibitem{WToolKit}
D. Williams, 
\emph{A tool kit for groupoid $ C^{\ast} $-algebras},
Mathematical Surveys and Monographs, \textbf{241}, American Mathematical Society, Providence, RI, 2019. xv+398 pp.

\end{thebibliography}
\end{document}